\newtheorem{theorem}{Theorem}[section]
\newtheorem{lemma}[theorem]{Lemma}
\newtheorem{corollary}[theorem]{Corollary} 
\newtheorem{proposition}[theorem]{Proposition}
\theoremstyle{definition}
\newtheorem{definition}[theorem]{{Definition}}
\newtheorem{example}[theorem]{Example}
\newtheorem{remark}[theorem]{Remark}
\newtheorem{fact}[theorem]{\bf Fact}
\newtheorem*{chunk*}{}
\numberwithin{equation}{theorem}
\newif\ifdviwin
\def\B{{\mathcal B}}
\def\m{{\mathfrak m}}
\def\Tor{\operatorname{Tor}}
\def\im{\operatorname{im}}
\def\Cl{\operatorname{Cl}}
\def\dim{\operatorname{dim}}
\def\cone{\operatorname{Cone}}
\def\length{\operatorname{length}}
\def\im{\operatorname{im}}
\newcommand{\excise}[1]{}
\newcommand{\ba}{{\bf a}}
\newcommand{\bb}{{\bf b}}
\newcommand{\bt}{{\bf t}}
\newcommand{\bu}{{\bf u}}
\newcommand{\pmba}{{\pmb{\alpha}}}
\newcommand{\pmbb}{{\pmb{\beta}}}
\newcommand{\fq}{\mathfrak q}
\newcommand{\fm}{\mathfrak m}
\newcommand{\sfk}{\mathsf k}
\newcommand{\cB}{\mathcal B}
\newcommand{\vol}{\operatorname{Vol}}
\newcommand{\rvol}{\operatorname{rel-vol}}
\newcommand{\gp}{\operatorname{gp}}
\newcommand{\bolde}{\bf e}
\newcommand{\hk}{{\rm e}_{HK}}
\newcommand{\e}{{\rm e}}
\newcommand{\showtwo}[3]
{%
    \begin{center}
        \includegraphics[width=#3\linewidth]{#1}%
        \hskip2in
        \includegraphics[width=#3\linewidth]{#2}%
    \end{center}
}
\begin{document}
\bibliographystyle{amsplain}

\subjclass[2010]{Primary 13A15, 05E40, 20M25; Secondary 52B11 }
\title[Invariants of Intersection Algebras of Principal Monomial Ideals]
{Computing The Invariants of Intersection Algebras of Principal Monomial Ideals}

\thanks{
The second author was supported in part by a grant (\#245926) from the Simons Foundation.}

\author[F.\ Enescu]{Florian Enescu}
\address{Florian Enescu, Department of Mathematics and Statistics,
758 COE,
Georgia State University,
Atlanta, Georgia 30303, USA}
\email{fenescu@gsu.edu}
\urladdr{http://www2.gsu.edu/$\sim$matfxe/}

\author[S.\ Spiroff]{Sandra Spiroff}
\address{Sandra Spiroff, Department of Mathematics, 
335 Hume Hall,
University of Mississippi, 
University, MS  38677-1848, USA}
\email{spiroff@olemiss.edu}
\urladdr{http://home.olemiss.edu/$\sim$spiroff/}

\begin{abstract}  We continue the study of intersection algebras $\mathcal B = \mathcal B_R(I, J)$ of two ideals $I, J$ in a commutative Noetherian ring $R$.  In particular, we exploit the semigroup ring and toric structures in order to calculate various invariants of the intersection algebra when $R$ is a polynomial ring over a field and $I,J$ are principal monomial ideals.  Specifically, we calculate the $F$-signature, divisor class group, and Hilbert-Samuel and Hilbert-Kunz multiplicities, sometimes restricting to certain cases in order to obtain explicit formul{\ae}. This provides a new class of rings where formul{\ae} for the $F$-signature and Hilbert-Kunz multiplicity, dependent on families of parameters, are provided.
\end{abstract}

\date \today
\maketitle

\section*{Introduction}

The intersection algebra $\mathcal B = \mathcal B_R(I, J)$ of two ideals $I, J$ in a commutative Noetherian ring $R$ was first introduced by J.~B.~Fields \cite{Fi} in 2002 in his study of the function $(r, s) \to \length(\Tor_1^R(R/I^r, R/J^s))$.  Fields has shown that this algebra is finitely generated when $I,J$ are monomial ideals in a polynomial ring in finitely many variables over a field. Not much is known about the properties of $\mathcal B_R(I,J)$ when $I,J$ are arbitrary ideals in a Noetherian ring $R$. The intersection algebra, which was the topic of S.~Malec's dissertation \cite{M-thesis}, was shown to be finitely generated over $R$ in the case that the two ideals are principal ideals in a unique factorization domain and an algorithm was given that produces generators for this $R$-algebra.  Of particular interest is the case where $R$ is a polynomial ring over a field $\sfk$ and $I$, $J$ are principal monomial ideals. In this case, the properties of the intersection algebra are a reflection of the combinatorics of the monomial exponents, which is often complicated. This project is a continuation of the study undertaken by Malec and Enescu \cite{EM}.   In particular, we aim to calculate the $F$-signature, the Hilbert-Samuel and Hilbert-Kunz multiplicities, and the divisor class group of $\cB$.  

The scope of the paper is to present general formul{\ae} for the invariants described above as functions of the monomial exponents. The $F$-signature and Hilbert-Kunz multiplicity are difficult to compute even for simple classes of rings of small dimension. In fact, the literature does not contain many examples when both of these invariants can computed simultaneously. Therefore, having general formul{\ae} for them for intersection algebras is extremely valuable. Our work provides such formula{\ae} as functions of parameters, giving researchers the opportunity to test their questions about these invariants, while varying the parameters. Since intersection algebras are toric, approaching the topic from the point of view of toric ideals and dual cones, we take advantage of the wealth of material in the literature, namely \cite{BG} and \cite{CLS}, and use software, such as {\it Mathematica} \cite{Wolf} and MATLAB \cite{MAT}, when possible.   We were able to accomplish this computation in a few important cases, while showing the limitations of current literature on toric rings, which might come as unexpected to some algebraists. 

Let $R = \sfk[x_1, \dots, x_n]$ over a field $\sfk$ and $I = (x_1^{a_1}x_2^{a_2}\cdots x_n^{a_n})$ and $J = (x_1^{b_1}x_2^{b_2}\cdots x_n^{b_n})$, where the exponents are nonnegative integers.
For general $n$, we compute the Hilbert-Samuel multiplicity and divisor class group for $\mathcal B_R(I, J)$. A closed formula in $n$, and $\ba = (a_1, \dots, a_n), \bb = (b_1, \dots, b_n)$, in the general case for the $F$-signature and Hilbert-Kunz multiplicity could not be obtained,
even with the high-performance computing capabilities offered by Georgia State University and, independently, the University of Mississippi. Whether this is possible remains for now an open problem. One challenge is the lack of a useful description of the Hilbert basis elements $\mathcal H$ in terms of $\ba, \bb$. The other, more serious one, is that, even when we know a description of the elements in $\mathcal H$, the software has difficulty either completing an integration that depends upon multiple parameters, or providing meaningful results. In particular, even when $n$ is simply equal to two, a myriad of cases arise in the geometry of the $F$-signature, preventing one from obtaining a general result. While we are able to provide some general formul{\ae}, because of the difficulty and technicality of the computations involved, we obtain the most complete results when each $a_i$ is a fixed multiple of $b_i$; i.e., $a_i = kb_i$ $\forall i$ and $k \in \mathbb N_+$, or when $n =1$.  If $k = 1$, then each of the algebras is $\mathbb Q$-Gorenstein, and in fact, a hypersurface, and we are able to provide formul{\ae} for the $F$-signature and Hilbert-Kunz multiplicity. When $n=1$, we compute the $F$-signature for a pair of positive integers $a, b$, with $I = (x^a), J = (x^b)$, and provide a formula for the Hilbert-Kunz multiplicity in some special cases.  In addition, we detail how one can use computer software to calculate the multiplicity for specific numerical examples.

The paper is laid out as follows.  The first section gives the necessary background and preliminary results in preparation of exploiting the relevant literature.  The second section details the calculation the Hilbert-Samuel multiplicity.  In the third section, we provide formul{\ae} for the $F$-signature; in the fourth we compute the Hilbert-Kunz multiplicity, and in the Appendix we detail how {\it Mathematica} may be used in specific numerical computations and we provide 3-D renderings of the volumes in question.

\section*{Acknowledgements}
The authors thank Sara Malec for useful conversations in the early stages of this project. In terms of assistance with software and running code on high-performance computers, when needed, the authors thank the contacts at Wolfram Technical Support, Semir Sarajlic and Suranga Edirisinghe Pathirannehelage at Georgia State, and Brian Hopkins and Ben Pharr at the University of Mississippi.


\section{Preliminaries, Notation, and Basic Results}


{\it The notation set in this section, especially that of Definition \ref{Hilbertsets}, Proposition \ref{positive}, and Theorem \ref{cones}, will be used throughout the paper}.  We provide a brief review of the relevant definitions and results.  More background, details, and discussion can be found in \cite{M} and \cite{EM}. Throughout the paper, $\mathbb N = \mathbb{Z}_{\geq 0}$, $\mathbb N_+ = \mathbb{Z}_{> 0}$, $\sfk$ is a field, and $x_1, \ldots, x_n$ will denote indeterminates over $\sfk$.

\begin{definition} \label{ring-assump} 
Let $R$ be a commutative ring and $I, J$ be two ideals of $R$. The intersection algebra of $I$ and $J$ is $$\mathcal B_R(I,J) = \bigoplus_{r, s \in \mathbb N}(I^r \cap J^s).$$ 

If $u, v$ are taken to be two indexing variables, then $\mathcal B_R(I,J) = \sum_{r, s \in \mathbb N}(I^r \cap J^s)u^rv^s \subseteq R[u,v]$.  

In particular, set $R = \sfk[x_1, \dots, x_n]$ and $I = (x_1^{a_1}x_2^{a_2}\cdots x_n^{a_n})$ and $J = (x_1^{b_1}x_2^{b_2}\cdots x_n^{b_n})$. Without loss of generality, when no pair $a_i, b_i$ is simultaneously zero for any $i =1, \ldots, n$, we may assume that the strings of nonnegative integers $\ba= a_1, \dots, a_n$ and $\bb = b_1, \dots, b_n$ are {\bf fan ordered}, as per \cite[Definition 2.7]{M}; i.e., $\displaystyle{\frac{a_i}{b_i} \geq \frac{a_{i+1}}{b_{i+1}}}$ for all $i = 1, \dots, n$, where by convention we write $\frac{k}{0} = \infty$ for $k >0$. We will denote the associated intersection algebra by $\B(\ba, \bb)$, or more briefly, $\mathcal B$. One should note that $\B(\ba, \bb)$ is $\mathbb{N}$-graded with $\m=(x^t : t\in Q)$ as the unique homogeneous maximal ideal. Let $a_0=1, b_0=0, a_{n+1}=0$, and $b_{n+1}=1$. If $\displaystyle{\frac{a_i}{b_i} > \frac{a_{i+1}}{b_{i+1}}}$ for all $i = 0, \dots, n$, then we call the fan {\bf non-degenerate}. This implies, in particular, that $b_i$ is nonzero for all $i \geq 1$.
\end{definition}

For $i=0, \dots, n$, let $C_i =  \{ \lambda_1(b_i, a_i) + \lambda_2(b_{i+1},a_{i+1}) | \lambda_i \in \mathbb R_{\geq 0} \}$. We refer to $\sum_{\ba, \bb}$, formed by these cones and their faces, as the fan of $\ba$ and $\bb$.  (See \cite[pp.~3-4]{M} for more details.)  Each segment $Q_i = C_i \cap \mathbb{N}^2$ of the fan admits a unique Hilbert basis $\mathcal H_i = \{ (r_{i,j}, s_{i, j}): j= 1, 2, \dots, n_i \}$.  Denote its cardinality by $h_i$.  

\begin{definition} \label{Hilbertsets} The set $\mathcal H= \cup _{i=0, \dots, n} \mathcal H_i$ is called the {\it Hilbert set} for $\B(\ba, \bb)$ and its cardinality, denoted by $h$, will be called the {\it Hilbert number} for $\B(\ba, \bb)$. Note that $h = \sum_i h_i -n$ when the fan is non-degenerate. For every $v=(r,s) \in \mathbb{Z}^2$, let $\bt(v) = (\max(a_ir, b_is))_{i=1, \ldots, n}$.  Set $\mathcal G= \{ (v, \bt(v)) : v \in \mathcal H\}$.
\end{definition}

Let $Q=Q(\ba, \bb) = \{ (r, s, t_1, \ldots, t_n) : t_i  \geq \max(a_i r, b_i s), i =1, \dots, n\} \subseteq \mathbb{N}^{n+2}$.  Then $Q$ is a commutative semigroup with identity (or, in other words, a commutative monoid).  As per \cite[p.~50]{BG}, let $\gp(Q)$ be the smallest (abelian) group that contains $Q$.

\begin{lemma}
\label{group}
With the notations above, $\gp(Q(\ba, \bb)) = \mathbb{Z}^{n+2}$.

\end{lemma}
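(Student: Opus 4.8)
Since $Q \subseteq \mathbb{N}^{n+2} \subseteq \mathbb{Z}^{n+2}$ and $\mathbb{Z}^{n+2}$ is a group, the inclusion $\gp(Q) \subseteq \mathbb{Z}^{n+2}$ is automatic, so the content of the statement is the reverse inclusion. Because $\gp(Q)$ is by definition the smallest subgroup of $\mathbb{Z}^{n+2}$ containing $Q$—equivalently, the set of all $\mathbb{Z}$-combinations (hence all differences) of elements of $Q$—it suffices to exhibit finitely many elements of $Q$ whose differences realize the standard basis $e_1, \dots, e_{n+2}$ of $\mathbb{Z}^{n+2}$, where I index coordinates as $(r, s, t_1, \dots, t_n)$. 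The plan is to dispatch the $t$-coordinates first, since they are essentially unconstrained, and then to strip down two convenient elements of $Q$ to recover the $r$- and $s$-directions.

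First I would observe that setting $r = s = 0$ collapses each defining inequality $t_i \geq \max(a_i r, b_i s)$ to $t_i \geq 0$. Hence every vector of the form $(0, 0, t_1, \dots, t_n)$ with $t_i \in \mathbb{N}$ already lies in $Q$; in particular, each unit vector $\varepsilon_i := (0, 0, \dots, 1, \dots, 0)$ supported in a single $t_i$-slot belongs to $Q$, and therefore to $\gp(Q)$.

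Next, for the $r$-direction, I would note that the vector $(1, 0, a_1, \dots, a_n)$ satisfies each inequality with equality, since $\max(a_i \cdot 1,\, b_i \cdot 0) = a_i$; thus it lies in $Q$. Subtracting $\sum_{i=1}^{n} a_i \varepsilon_i$, a combination of elements already placed in $\gp(Q)$, leaves $(1, 0, 0, \dots, 0) \in \gp(Q)$. Symmetrically, $(0, 1, b_1, \dots, b_n) \in Q$ because $\max(a_i \cdot 0,\, b_i \cdot 1) = b_i$, and subtracting $\sum_{i=1}^{n} b_i \varepsilon_i$ yields $(0, 1, 0, \dots, 0) \in \gp(Q)$. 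Having produced all $n+2$ standard basis vectors inside $\gp(Q)$, I conclude $\gp(Q) = \mathbb{Z}^{n+2}$.

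As for the main difficulty, there is none of real substance: the result is immediate once the structure is unwound. The only point deserving care is the reminder that $\gp(Q)$ contains differences of semigroup elements, not merely nonnegative combinations—this is precisely what lets the last two steps subtract off the $t$-unit vectors, so that the nonnegativity constraints built into $Q$ itself become harmless in the ambient group.
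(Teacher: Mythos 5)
Your proposal is correct and follows essentially the same route as the paper: both arguments realize each standard basis vector of $\mathbb{Z}^{n+2}$ as a difference of elements of $Q$, first in the $t_i$-slots (where the constraints are vacuous once the inequality is satisfied) and then in the $r$- and $s$-slots by stripping the $t$-part off $(1,0,a_1,\dots,a_n)$ and $(0,1,b_1,\dots,b_n)$. Your write-up is just a more explicit instance of the paper's one-line argument.
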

\begin{proof}
Note that that if $(r, s, t_1, \ldots, t_i, \ldots,  t_n) \in Q=Q(\ba, \bb)$, then $(r, s, t_1, \ldots, t_i+1, \ldots,  t_n) \in Q$ and so the standard basis vector $\bold{e}_{i+2}= (r, s, t_1, \ldots, t_i+1, \ldots,  t_n)-(r, s, t_1, \ldots, t_i, \ldots,  t_n)\in \gp(Q)$, for $i =1, \dots, n$. A similar argument shows that $\bold{e}_1, \bold{e}_2 \in Q$ as well.
\end{proof}

The following has been noted in \cite[Theorem 3.5]{M} and \cite[Theorem 1.11]{EM}, and is key to our purpose.

\begin{fact} \label{old}
$\B (\ba, \bb) = \sfk [Q]$, and it is generated as a $\sfk$-algebra by $\{x_1, \ldots, x_n\}$ and the monomials with exponent vectors from $\mathcal G$. Moreover, the minimal number of generators of $\fm$, the ideal in $\mathcal B$ generated by the monomials in $Q$, equals $n + h$.  This is also called  the embedding dimension of $\cB$, denoted $\nu(\mathcal B)$.
\end{fact}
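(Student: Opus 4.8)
The plan is to realize $\B(\ba,\bb)$ as the affine semigroup ring $\sfk[Q]$ explicitly, then determine a monoid generating set of $Q$ using a cone-wise linearity of the map $\bt$, and finally match the \emph{minimal} such generators with a $\sfk$-basis of $\fm/\fm^2$. For the first identification I would start from the observation that $I^r\cap J^s$ is the monomial ideal generated by $x_1^{\max(a_1r,b_1s)}\cdots x_n^{\max(a_nr,b_ns)}$, so that a $\sfk$-basis of $\B$ is given by the monomials $x_1^{c_1}\cdots x_n^{c_n}u^rv^s$ with $c_i\ge\max(a_ir,b_is)$ for all $i$. The assignment $\chi^{(r,s,t_1,\dots,t_n)}\mapsto x_1^{t_1}\cdots x_n^{t_n}u^rv^s$ then carries the canonical $\sfk$-basis of $\sfk[Q]$ bijectively onto this basis and is multiplicative because exponents add, giving a graded $\sfk$-algebra isomorphism $\sfk[Q]\cong\B$. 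Since $Q$ consists of the lattice points of a pointed rational cone, it is a positive, finitely generated affine monoid with $\gp(Q)=\mathbb{Z}^{n+2}$ by Lemma \ref{group}.

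The heart of the generation statement is that on each cone $C_i$ the map $\bt$ agrees with the linear map $L_i(r,s)=(a_1r,\dots,a_ir,\,b_{i+1}s,\dots,b_ns)$: by the fan ordering, $a_\ell r\ge b_\ell s$ for $\ell\le i$ and $a_\ell r\le b_\ell s$ for $\ell\ge i+1$ whenever $(r,s)\in C_i$. Moreover $\bt(r,s)\ge L_i(r,s)$ coordinatewise for every $(r,s)\in\mathbb{N}^2$, with equality exactly when $(r,s)\in C_i$. Given $(r,s,t_1,\dots,t_n)\in Q$, I would pick $i$ with $(r,s)\in C_i$, expand $(r,s)=\sum_j\mu_j(r_{i,j},s_{i,j})$ over the Hilbert basis $\mathcal H_i$ of $Q_i=C_i\cap\mathbb{N}^2$ with $\mu_j\in\mathbb{N}$, and use the linearity of $L_i=\bt|_{C_i}$ to obtain
\[
(r,s,\bt(r,s))=\sum_j\mu_j\bigl(r_{i,j},\,s_{i,j},\,\bt(r_{i,j},s_{i,j})\bigr),
\]
a non-negative combination of elements of $\mathcal G$. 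Absorbing the remaining slack in the last $n$ coordinates gives
\[
(r,s,t_1,\dots,t_n)=(r,s,\bt(r,s))+\sum_{\ell=1}^n\bigl(t_\ell-\max(a_\ell r,b_\ell s)\bigr)\,\bolde_{\ell+2},
\]
so $Q$ is generated as a monoid by $\{\bolde_3,\dots,\bolde_{n+2}\}\cup\mathcal G$, which under the isomorphism are precisely $x_1,\dots,x_n$ together with the monomials whose exponent vectors lie in $\mathcal G$.

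For the embedding dimension I would use that $\B$ is $\mathbb{N}$-graded with $\B_0=\sfk$ and $\fm$ its homogeneous maximal ideal, so by graded Nakayama $\nu(\B)=\dim_\sfk\fm/\fm^2$ equals the number of irreducible elements of the positive monoid $Q$ (those not a sum of two nonzero elements). The vectors $\bolde_{\ell+2}$ are irreducible since their coordinate sum is $1$. For $g=(r,s,\bt(r,s))$ with $(r,s)\in\mathcal H_i$, suppose $g=p+p'$ with $p=(r_1,s_1,t^{(1)})$ and $p'=(r_2,s_2,t^{(2)})$ in $Q$; from $t^{(1)}+t^{(2)}=\bt(r,s)=L_i(r_1,s_1)+L_i(r_2,s_2)$ together with $t^{(j)}\ge\bt(r_j,s_j)\ge L_i(r_j,s_j)$, every inequality must be an equality, forcing $\bt(r_j,s_j)=L_i(r_j,s_j)$ and hence $(r_j,s_j)\in C_i$; a nonzero part with vanishing first two coordinates would itself be forced to be $0$, so in a genuine decomposition both $(r_j,s_j)$ are nonzero elements of $Q_i$, contradicting that $(r,s)$ lies in the Hilbert basis $\mathcal H_i$. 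Thus the generators of the previous step are exactly the irreducibles, and they are pairwise distinct ($\mathcal G$-elements have $(r,s)\ne(0,0)$ while the $\bolde_{\ell+2}$ have vanishing first two coordinates, and $v\mapsto(v,\bt(v))$ is injective on $\mathcal H$), yielding $\nu(\B)=n+h$.

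The step I expect to be the main obstacle is the irreducibility argument in the last paragraph, i.e.\ proving that no element of $\mathcal G$ is redundant. The subtlety is that a point $(r,s)$ in the interior of $C_i$ could a priori be written as a sum of $\mathbb{N}^2$-points lying in neighboring cones; the inequality $\bt\ge L_i$ with equality only on $C_i$ is exactly what forces any admissible decomposition of $g$ to stay inside $C_i$, reducing irreducibility to the defining property of the Hilbert basis. Some additional bookkeeping is needed at the shared rays $(b_i,a_i)$ and in the degenerate case, where $h=|\mathcal H|$ is strictly smaller than $\sum_i h_i$ because adjacent cones share Hilbert-basis elements.
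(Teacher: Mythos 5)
Your proof is correct. Note that the paper itself offers no argument for this Fact --- it is quoted from \cite[Theorem 3.5]{M} and \cite[Theorem 1.11]{EM} --- so what you have written is a self-contained proof of a statement the authors only cite; the route you take (identify $I^r\cap J^s$ with the monomials above $\bt(r,s)$, use cone-wise linearity of $\bt$ together with the Hilbert bases $\mathcal H_i$ to generate $Q$, then count irreducibles of the positive monoid via graded Nakayama) is exactly the one the cited sources use, and the key inequality $\bt\geq L_i$ with equality precisely on $C_i$ is the right tool both for generation and for showing that any decomposition of an element of $\mathcal G$ is trapped inside a single $Q_i$, whence minimality follows from the defining property of a Hilbert basis. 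The only caveats are the ones you already flag yourself: the statement is made before the reduction of Proposition \ref{positive}, so in full generality some $a_i$ or $b_i$ may vanish and the identification of $\{\bt=L_i\}$ with $C_i$ needs the convention $k/0=\infty$ spelled out; and in the degenerate case the count $h=|\mathcal H|$ already accounts for Hilbert-basis elements shared by adjacent cones, which your injectivity remark handles.
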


\begin{theorem} \label{properties}
The intersection algebra $\B=\sfk[Q(\ba, \bb)]$ is normal, Cohen-Macaulay, $F$-regular, and of dimension $n+2$.
\end{theorem}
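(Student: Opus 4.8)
The plan is to leverage the explicit description $\B = \sfk[Q(\ba,\bb)]$ from Fact~\ref{old} and realize $Q$ as a finitely generated, normal affine monoid, so that the entire statement reduces to standard facts from the theory of toric rings (as developed in \cite{BG} and \cite{CLS}). The dimension claim is the most direct: by Lemma~\ref{group} we have $\gp(Q) = \mathbb{Z}^{n+2}$, and since the Krull dimension of an affine monoid algebra $\sfk[Q]$ equals the rank of $\gp(Q)$, we conclude $\dim \B = n+2$. I would state this first, as it requires essentially no additional work.

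Next I would establish normality. The key observation is that $Q$ is defined inside $\mathbb{N}^{n+2} \subseteq \mathbb{Z}^{n+2}$ by finitely many inequalities of the form $t_i \geq a_i r$ and $t_i \geq b_i s$ (together with $r,s,t_i \geq 0$), all of which have integer coefficients. Thus $Q$ is exactly the set of lattice points in a rational polyhedral cone $\sigma \subseteq \mathbb{R}^{n+2}$, i.e. $Q = \sigma \cap \mathbb{Z}^{n+2} = \sigma \cap \gp(Q)$. A monoid of this form is saturated (normal): if $m \in \gp(Q)$ and $cm \in Q$ for some positive integer $c$, then $cm$ lies in the cone $\sigma$, hence so does $m$ by convexity, whence $m \in Q$. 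By the standard criterion (see \cite[Theorem~6.3.5]{BG} or the corresponding statement in \cite{CLS}), normality of the monoid is equivalent to normality of $\sfk[Q]$, giving the first claim. It is worth remarking explicitly that $\sigma$ is full-dimensional and strongly convex (the latter because $Q \subseteq \mathbb{N}^{n+2}$ contains no nonzero units), so $\sfk[Q]$ is genuinely the coordinate ring of an affine toric variety.

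The Cohen--Macaulay property then follows immediately from Hochster's theorem: the coordinate ring of a normal affine semigroup is Cohen--Macaulay (\cite[Theorem~6.3.5 (a)]{BG}). For $F$-regularity I would invoke the theorem of Hochster and Huneke that normal affine semigroup rings over a field are strongly $F$-regular (equivalently $F$-regular) in positive characteristic; since $\B$ is such a ring, this applies directly. One should note that $F$-regularity is a characteristic-$p$ notion, so this part of the statement is understood to hold when $\chr \sfk = p > 0$, while normality, Cohen--Macaulayness, and the dimension formula hold in all characteristics.

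I do not anticipate a serious obstacle here, since every assertion is a known structural consequence of $\B$ being a normal affine monoid algebra; the only real content is the verification that $Q$ is saturated, which is the convexity argument sketched above. The mild bookkeeping point to get right is to confirm that $\gp(Q) = \mathbb{Z}^{n+2}$ (supplied by Lemma~\ref{group}) so that the saturation is computed against the full lattice and the rank count for the dimension is exactly $n+2$; with that in hand, citing \cite{BG} and the Hochster--Huneke result closes each of the four claims in turn.
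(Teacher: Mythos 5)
Your proposal is correct and follows essentially the same route as the paper: every claim is reduced to standard facts about normal affine semigroup rings, with the dimension coming from $\gp(Q)=\mathbb{Z}^{n+2}$ and normality/Cohen--Macaulayness from Hochster's theorem. The only cosmetic difference is that for $F$-regularity the paper argues via the fact that normal toric rings are direct summands of regular rings, whereas you cite the Hochster--Huneke result on strong $F$-regularity of normal semigroup rings directly; both are valid, and your explicit remark that $F$-regularity is only asserted in positive characteristic is a sensible clarification.
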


\begin{proof} The fact that $\B$ is normal and Cohen-Macaulay is \cite[Proposition 1]{Ho} and \cite[Theorem 1]{Ho}, respectively.  The fact that $\dim \B = n+2$ is \cite[Proposition 1.3]{EM}. The $F$-regularity property follows from~\cite{Ho} too, since normal toric rings are direct summands in a regular ring, hence they are $F$-regular.
\end{proof}

It is useful to note that, in general, the study of intersection algebras $\cB (\ba, \bb)$ can be reduced to the case of vectors $\ba, \bb$ with positive entries.

\begin{proposition} \label{positive}
Let $\ba= (a_1, \dots, a_n)$ and $\bb = (b_1, \dots, b_n)$ be nonnegative integer vectors. Fix $0 \leq i,  j \leq n$ such that $b_1=\cdots=b_i=0$, while $b_l \neq 0$ for all $l > i$, and $a_k \neq 0$ for all $k < j$, while $a_j =\cdots =a_n =0$.

\begin{enumerate}
\item 
If $i \geq j$, then $$\cB (\ba, \bb) \simeq \sfk[x_1, \ldots, x_n, U, V],$$ where $U,V$ are indeterminates over $\sfk$.

\item

If $i < j$, then

$$\cB (\ba, \bb) \simeq \cB((a_{i+1}, \cdots, a_{j-1}), (b_{i+1}, \cdots, b_{j-1})) [ x_1, \ldots, x_i, x_j, \ldots, x_n],$$ with the convention that if $b_1 \neq 0$ (or $a_n \neq 0$) we do not adjoin the variables $x_{i'}, i' \leq i $ (respectively,  $x_{j'}, j' \geq j$).
\end{enumerate}
\end{proposition}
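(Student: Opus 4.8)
The plan is to argue entirely at the level of the affine semigroup $Q=Q(\ba,\bb)$, using the identification $\cB(\ba,\bb)=\sfk[Q]$ of Fact~\ref{old} together with the functoriality of $\sfk[-]$: a semigroup isomorphism induces a $\sfk$-algebra isomorphism of the associated semigroup algebras. The first step is to record how the defining inequalities $t_l\geq\max(a_lr,b_ls)$ of $Q$ degenerate according to the zero pattern of the exponents. For $l\leq i$ we have $b_l=0$, so the $l$-th constraint reads $t_l\geq a_lr$; for $l\geq j$ we have $a_l=0$, so it reads $t_l\geq b_ls$; and for $i<l<j$ both $a_l$ and $b_l$ are nonzero, so it remains a genuine maximum. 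These three ranges cover $\{1,\dots,n\}$, overlapping only when $i\geq j$, and then only on indices $l$ with $a_l=b_l=0$; in particular $i\geq j$ is precisely the condition that the ``mixed'' range $i<l<j$ is empty.

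The engine of the proof is a single unimodular shear $T\colon\mathbb{Z}^{n+2}\to\mathbb{Z}^{n+2}$ that decouples the free directions from the intersecting ones: $T$ fixes the coordinates $r,s$, replaces $t_l$ by $t_l-a_lr$ whenever $b_l=0$, and replaces $t_l$ by $t_l-b_ls$ whenever $a_l=0$ (on an index where both vanish, the two recipes agree, each subtracting $0$). In the ordering $(r,s,t_1,\dots,t_n)$ the matrix of $T$ is triangular with $1$'s on the diagonal, so $T$ is unimodular and, by Lemma~\ref{group}, restricts to an automorphism of $\gp(Q)=\mathbb{Z}^{n+2}$. Under $T$ each degenerate constraint $t_l\geq a_lr$ or $t_l\geq b_ls$ becomes simply $t_l\geq 0$, while the mixed constraints are untouched and the constraints $r,s\geq 0$ persist. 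Hence $T$ restricts to a semigroup isomorphism from $Q$ onto the explicit semigroup $T(Q)\subseteq\mathbb{N}^{n+2}$ cut out by these transformed inequalities.

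In Case (1), $i\geq j$ forces the mixed range to be empty, so every transformed inequality is just $t_l\geq 0$ (together with $r,s\geq 0$); thus $T(Q)=\mathbb{N}^{n+2}$ and $\cB(\ba,\bb)\simeq\sfk[\mathbb{N}^{n+2}]=\sfk[x_1,\dots,x_n,U,V]$, where $U,V$ are the monomials corresponding to the $r$- and $s$-directions. In Case (2), write $\ba'=(a_{i+1},\dots,a_{j-1})$ and $\bb'=(b_{i+1},\dots,b_{j-1})$; these have strictly positive entries and inherit the fan ordering, so $\cB(\ba',\bb')$ is an intersection algebra of the standard type. The coordinates $(r,s,t_{i+1},\dots,t_{j-1})$ of $T(Q)$ cut out exactly a copy of $Q(\ba',\bb')$, while the remaining coordinates $t_l$ ($l\leq i$ or $l\geq j$) are free; after a permutation of coordinates $T(Q)$ is the product semigroup $Q(\ba',\bb')\times\mathbb{N}^{m}$ with $m=i+(n-j+1)$. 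Passing to semigroup algebras turns the product into a polynomial extension, $\sfk[Q(\ba',\bb')\times\mathbb{N}^{m}]\simeq\cB(\ba',\bb')[y_1,\dots,y_m]$; and since $T$ fixes each basis vector $\mathbf{e}_{l+2}$ with $l\leq i$ or $l\geq j$, these adjoined variables are exactly $x_1,\dots,x_i,x_j,\dots,x_n$, giving the claimed isomorphism.

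I expect the work to be bookkeeping rather than conceptual. The one point that needs genuine care is verifying that $T$ maps $Q$ \emph{onto} the asserted target and not merely into it, i.e.\ that applying $T^{-1}$ to any point of the product semigroup restores the $a_lr$ and $b_ls$ terms and lands back in $Q\subseteq\mathbb{N}^{n+2}$. The remaining subtlety is the treatment of empty index ranges at the boundary --- for instance $i=0$ when $b_1\neq 0$, or $j=n+1$ when $a_n\neq 0$ --- which is exactly where the stated convention about omitting the variables $x_{i'}$ (respectively $x_{j'}$) applies; these cases require no separate argument, since an empty range simply contributes no free coordinates to $T(Q)$.
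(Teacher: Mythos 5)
Your argument is correct, and it reaches the conclusion by a genuinely different formal route than the paper. The paper works directly with the monomial generators: it regroups a typical generator of $\cB(\ba,\bb)$ as a monomial from the smaller intersection algebra times powers of $x_1^{a_1}\cdots x_i^{a_i}u$ and $x_j^{b_j}\cdots x_n^{b_n}v$, defines the ring map $U\mapsto x_1^{a_1}\cdots x_i^{a_i}u$, $V\mapsto x_j^{b_j}\cdots x_n^{b_n}v$, observes that the induced map onto $\cB(\ba,\bb)$ is surjective, and then concludes injectivity from the fact that a surjection of domains of equal finite dimension is an isomorphism. Your unimodular shear $T$ is the lattice-level incarnation of exactly that substitution (indeed $T^{-1}(\mathbf{e}_1)=(1,0,a_1,\dots,a_i,0,\dots,0)$ is the exponent vector of $x_1^{a_1}\cdots x_i^{a_i}u$), but by verifying directly that $T$ carries $Q$ bijectively onto $Q(\ba',\bb')\times\mathbb{N}^m$ you get the isomorphism of semigroup algebras outright, with no appeal to a dimension count. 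That is arguably cleaner --- bijectivity is checked where it is easiest, on the defining inequalities of the cone --- and it also handles Case (1) and Case (2) uniformly, whereas the paper proves (2) and waves at (1) as ``similar.'' What the paper's approach buys in exchange is an explicit description of where the algebra generators go, which it reuses immediately (e.g., in the presentation of Proposition~\ref{q-gorenstein}). The one point you flagged as needing care --- surjectivity of $T$ onto the product semigroup --- is indeed the crux, and your observation that $T^{-1}$ restores the terms $a_lr$ and $b_ls$ and that the remaining constraints $t_l\geq\max(a_lr,b_ls)$ for $i<l<j$ are untouched settles it.
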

\begin{proof}
We will prove (2), since (1) can be shown in a similar fashion. So, let us assume that $i < j$.

Note that $$x_1^{a_1r}\cdots x_i^{a_ir} \cdot x_{i+1}^{\max(a_{i+1}r, b_{i+1}s)} \cdots x_{j-1}^{\max(a_{j-1}r, b_{j-1}s)} x_j^{b_js} \cdots x_n^{b_ns} \cdot u^r v^s$$ can be rewritten as
$$x_{i+1}^{\max(a_{i+1}r, b_{i+1}s)} \cdots x_{j-1}^{\max(a_{j-1}r, b_{j-1}s)}  \cdot (x_1^{a_1}\cdots x_i^{a_i} \cdot u)^r (x_j^{b_j} \cdots x_n^{b_n}v)^s.$$
Consider the intersection algebra over $\sfk$ corresponding to the two vectors $(a_{i+1}, \cdots, a_{j-1}), (b_{i+1}, \cdots, b_{j-1})$, where we use $U, V$ as indeterminates to distinguish them from $u,v$.  Hence $$\cB((a_{i+1}, \cdots, a_{j-1}), (b_{i+1}, \cdots, b_{j-1}))= \sum_{r,s}  (x_{i+1}^{\max(a_{i+1}r, b_{i+1}s)} \cdots x_{j-1}^{\max(a_{j-1}r, b_{j-1}s)}) U^rV^s \subseteq \sfk[x_{i+1}, \ldots, x_{j-1}, U, V].$$ 

Map the two new indeterminates $U,V$ to $x_1^{a_1}\cdots x_i^{a_i} \cdot u$ and $x_j^{b_j} \cdots x_n^{b_n}v$, respectively, and $x_l \to x_l$, for $l=1, \ldots, n$, which results in a homomorphism:
$$\sfk [x_1, \cdots, x_n, U, V] \to \sfk [x_1, \cdots, x_n, u, v].$$
By restriction, this induces a surjective homomorphism:
$$\cB((a_{i+1}, \cdots, a_{j-1}), (b_{i+1}, \cdots, b_{j-1})) [ x_1, \ldots, x_i, x_j, \ldots, x_n] \to \cB (\ba, \bb)$$ of rings of the same dimension, which must therefore be an isomorphism.
\end{proof}

\medskip
\noindent
\begin{remark} In light of Proposition~\ref{positive}, for the remainder of this paper, we will assume that we are given two integer vectors $\ba, \bb \in \mathbb{Z}^n$ with all entries positive, which are fan ordered.
\end{remark}

\subsection*{The dual cone and primitive vectors}
Because the aim is to apply results from \cite{CLS} and \cite{W} (among others), which take a toric approach, it is necessary to present the perspective of the dual cone.  This requires some additional notation, which will be referenced in subsequent sections.

\medskip
\noindent
{\bf Notation.} \label{cone-assump} Given the strings of positive integers $a_1, \dots, a_n$ and $b_1, \dots, b_n$, which are are fan ordered, set the following notation in $\mathbb Z^{n+2}$:
$${\bolde}_1 = (1,0,0,0,\dots,0), \hskip.08in {\bolde}_2 = (0,1,0,0,\dots,0),$$ 
$$ \pmba_1 = (-a_1, 0, 1,0,\dots, 0), \hskip.08in \pmba_2 = (-a_2, 0, 0, 1,0,\dots,0),\dots, \hskip.08in \pmba_n = (-a_n,0,0,\dots,0,1),$$ 
$$\pmbb_1 = (0, -b_1,1,0,\dots,0), \hskip.08in \pmbb_2 = (0,-b_2,0,1,0,\dots,0),\dots, \hskip.08in \pmbb_n = (0,-b_n,0,\dots,0,1).$$

The cone in $\mathbb R^{n+2}$ generated by $\bolde_1, \bolde_2, \pmba_1, \dots, \pmba_n, \pmbb_1, \dots, \pmbb_n$ will be denoted by  $\sigma$.

Let $\lambda_i =\frac{a_i}{b_i}, \lambda_i \geq \lambda_{i+1},  i=1, \dots, n.$  Additionally, take $\lambda_{n+1} =0$ (and $\lambda_0 =\infty$).  Set:
$${\bolde}_3 = (0,0,1,0, \dots, 0), \hskip.25in \dots, \hskip.25in {\bolde}_{n+2} = (0, \dots, 0,1),$$ 
$${\bf w}_0 = (1, 0, a_1, \dots, a_n), \hskip.4in {\bf w}_{n+1} = (0, 1, b_1, \dots, b_n)$$
$${\bf w}_1 = (1, \lambda_1, a_1, \lambda_1b_2, \lambda_1 b_3, \dots, \lambda_1 b_n),$$
$${\bf w}_2 = (1, \lambda_2, a_1, a_2, \lambda_2b_3, \dots, \lambda_2 b_n),$$ 
\centerline{$\vdots$}
$${\bf w}_n = (1, \lambda_n, a_1, a_2, a_3, \dots, a_{n-1}, a_n).$$

Let $C = \{ (r,s, t_1, \ldots, t_n): r, s \in \mathbb R_{\geq 0}, \hskip.08in t_i \geq \max(a_ir, b_is), i=1, \ldots, n\}.$ Clearly, $Q= C \cap \mathbb{Z}^{n+2}.$

\begin{theorem} \label{cones}
With the notation as above, $$C= \cone({\bolde}_3, \dots, {\bolde}_{n+2}, {\bf w_0}, {\bf w_1}, \dots, {\bf w}_n, {\bf w}_{n+1}),$$ hence $C$ is a convex polyhedral cone. Moreover, the cones $$\sigma=\cone({\bolde}_1, {\bolde}_2, \pmba_1, \dots, \pmba_n, \pmbb_1, \dots, \pmbb_n) \quad \text{and} \quad C$$ are dual to each other; i.e., $\sigma^{\vee} =C$.
\end{theorem}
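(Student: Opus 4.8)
The plan is to prove the two assertions separately: the duality $\sigma^{\vee}=C$ is essentially immediate from the generator description of $\sigma$, whereas identifying the extreme rays of $C$ (the content of the first assertion) requires a piecewise-linear decomposition of the first quadrant. For the duality, write $y=(r,s,t_1,\dots,t_n)$ and recall that the dual of a finitely generated cone is cut out by the inequalities obtained by pairing $y$ against the generators. Pairing against $\bolde_1,\bolde_2$ gives $r\ge 0$ and $s\ge 0$; pairing against $\pmba_i$ gives $t_i-a_ir\ge 0$; and pairing against $\pmbb_i$ gives $t_i-b_is\ge 0$. Together these say exactly that $t_i\ge\max(a_ir,b_is)$ for every $i$ with $r,s\ge 0$, which is the defining system of $C$. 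Hence $\sigma^{\vee}=C$; in particular $C$, as the solution set of finitely many homogeneous linear inequalities, is a convex polyhedral cone.

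For the generation statement, set $C'=\cone(\bolde_3,\dots,\bolde_{n+2},{\bf w}_0,\dots,{\bf w}_{n+1})$. The inclusion $C'\subseteq C$ is a direct check that each generator satisfies the inequalities above: each $\bolde_{i+2}$ has $r=s=0$ and a single nonnegative $t$-coordinate, while ${\bf w}_0,{\bf w}_{n+1}$ and each ${\bf w}_k$ realize the equality $t_i=\max(a_ir,b_is)$ coordinatewise. Here I would first record, using $\lambda_kb_k=a_k$ and the fan ordering $\lambda_1\ge\cdots\ge\lambda_n$, the uniform description ${\bf w}_k=(1,\lambda_k,\max(a_1,\lambda_kb_1),\dots,\max(a_n,\lambda_kb_n))$, which makes this check transparent.

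The substantive inclusion is $C\subseteq C'$. Given $(r,s,t_1,\dots,t_n)\in C$, I would first subtract the nonnegative combination $\sum_i\bigl(t_i-\max(a_ir,b_is)\bigr)\bolde_{i+2}$ to reduce to the boundary point $P(r,s)=(r,s,\max(a_1r,b_1s),\dots,\max(a_nr,b_ns))$, so it suffices to show $P(r,s)\in C'$ for all $r,s\ge 0$. The key observation is that $P$ is piecewise linear: the rays of slope $\lambda_1\ge\cdots\ge\lambda_n$, together with the two axes, subdivide the first quadrant into angular sectors, and on the sector $\{\lambda_{k+1}\le s/r\le\lambda_k\}$ the fan ordering forces $\max(a_ir,b_is)=a_ir$ for $i\le k$ and $=b_is$ for $i>k$, so $P$ is linear there. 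Evaluating $P$ on the two bounding rays of a sector returns exactly the two generators bounding it (the axes giving ${\bf w}_0$ and ${\bf w}_{n+1}$), so by linearity $P(r,s)$ is a nonnegative combination of these two generators and lies in $C'$. Combining the two inclusions yields $C=C'$.

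I expect the bookkeeping in this last step to be the main obstacle. One must track which coordinate's maximum switches across each slope $\lambda_k$, check that $P$ is continuous across each sector boundary (the two adjacent sector formulas must agree and both return ${\bf w}_k$ on the ray of slope $\lambda_k$), and correctly match the sectors, ordered by slope as $\infty,\lambda_1,\dots,\lambda_n,0$, against the generators ${\bf w}_{n+1},{\bf w}_1,\dots,{\bf w}_n,{\bf w}_0$, whose index ordering is \emph{not} the slope ordering. The fan-ordering hypothesis is precisely what guarantees that each linear piece of $P$ is a \emph{nonnegative} combination of two adjacent generators rather than an arbitrary linear combination, and so it is indispensable to the argument.
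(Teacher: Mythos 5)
Your proof is correct and takes essentially the same route as the paper: the duality is read off directly from the defining inequalities of $C$, and the inclusion $C\subseteq\cone({\bolde}_3,\dots,{\bolde}_{n+2},{\bf w}_0,\dots,{\bf w}_{n+1})$ is obtained by dropping to the boundary point and writing each angular sector's linear piece as a nonnegative combination of the two bounding generators, which is exactly the paper's explicit decomposition $u=\gamma{\bf w}_i+\theta{\bf w}_{i+1}$ with $r=\gamma+\theta$, $s=\lambda_i\gamma+\lambda_{i+1}\theta$. The boundary-sector bookkeeping you flag is a genuine (if minor) point rather than a gap: in the two extreme sectors the paper's uniform indexing tacitly requires ${\bf w}_{n+1}$ (slope $\infty$) and ${\bf w}_0$ (slope $0$) to be slotted into the slope ordering in place of the literal indices $0$ and $n+1$, so your care there is warranted.
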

\begin{proof} Obviously, $\cone({\bolde}_3, \dots, {\bolde}_{n+2}, {\bf w_0}, {\bf w_1}, \dots, {\bf w}_n, {\bf w}_{n+1}) \subseteq C$, so it remains to prove the reverse inclusion.  Let $(r,s, t_1, \ldots, t_n) \in C$. If $r=0$, the claim is easy to show. Say $r \neq 0$ and let $i$, $0 \leq i \leq n+1$, such that $\lambda_i \geq \frac{s}{r} > \lambda_{i+1}$. This implies that
$$\max(a_kr, b_ks) = \left\{
\begin{array}{ll}
a_kr, & \mbox{if} \ k \leq i, \\
\ b_ks, & \mbox{if} \ k \ \geq i+1.
\end{array}
\right.
$$

Since $t_k \geq \max(a_kr, b_ks)$, for all $k=1, \dots, n$, it is enough to show that
$$u:=(r,s, \ldots, \max(a_kr, b_ks), \ldots) \in C.$$
Let $\gamma, \theta$ be nonnegative numbers such that 
$$ r = \gamma +\theta, \qquad s = \lambda_i \gamma + \lambda_{i+1} \theta.$$
We claim that $$ u =\gamma {\bf w}_i + \theta {\bf w}_{i+1},$$ which shows that $u$ is an element in $\cone({\bolde}_3, \dots, {\bolde}_{n+2}, {\bf w_0}, {\bf w_1}, \dots, {\bf w}_n, {\bf w}_{n+1})$. Note that $\gamma {\bf w}_i + \theta {\bf w}_{i+1}$ has, on position $k+2$, the entry
$$\gamma a_k + \theta a_k =a_kr, {\rm if} \ k \leq i,$$ and
$$\gamma \lambda_i b_k + \theta \lambda_{i+1} b_k =b_ks, {\rm if} \  k \geq i+1.$$ In both cases, we get $\max(a_k r, b_k s)$ which matches the entry on position $k+2$ for $u$. 

Regarding the second part of the statement, for $m \in \mathbb{R}^{n+2}$, let $H^+_m =\{ u \in \mathbb{R}^{n+2} \mid \langle m, u \rangle \geq 0\}$. It can be verified that $C = H^+_{\bolde_1} \cap H^+_{\bolde _2} \cap H^+_{\pmba_1} \cap  \cdots \cap H^+_{\pmba_n} \cap H^+_{\pmbb_1} \cap \cdots \cap H^+_{\pmbb_n}.$  By definition, ${\bolde}_1, {\bolde}_2, \pmba_1, \dots, \pmba_n, \pmbb_1, \dots, \pmbb_n$ are in $C^{\vee}= \{ m \in \mathbb R^{n+2} \mid \langle m, u \rangle\ \geq 0, \ {\rm for \ all} \ u \in C\}$, and Proposition 1.2.8 (a) in~\cite{CLS} applies to give that $C^{\vee}=\cone({\bolde}_1, {\bolde}_2, \pmba_1, \dots, \pmba_n, \pmbb_1, \dots, \pmbb_n).$

\end{proof}

\begin{corollary}
\label{primvect} The intersection algebra $\mathcal B$ equals $\sfk[\sigma^{\vee} \cap \mathbb{Z}^{n+2}]$. 
\end{corollary}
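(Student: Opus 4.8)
The plan is to observe that this corollary is an immediate consequence of the results already established, requiring only a short chain of equalities rather than any new argument. The strategy is to combine three facts: first, Fact \ref{old}, which identifies the intersection algebra with the semigroup ring of $Q$, namely $\mathcal{B}(\ba, \bb) = \sfk[Q]$; second, the observation recorded just before Theorem \ref{cones} that $Q = C \cap \mathbb{Z}^{n+2}$, expressing the semigroup $Q$ as the lattice points of the real cone $C$; and third, the duality statement $\sigma^{\vee} = C$ proved in Theorem \ref{cones}.

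Concretely, I would simply write the composite identification
$$\mathcal{B} = \sfk[Q] = \sfk[C \cap \mathbb{Z}^{n+2}] = \sfk[\sigma^{\vee} \cap \mathbb{Z}^{n+2}],$$
where the first equality is Fact \ref{old}, the second is the definition $Q = C \cap \mathbb{Z}^{n+2}$, and the final substitution replaces $C$ by $\sigma^{\vee}$ using Theorem \ref{cones}. No further verification is needed, since each step is a direct quotation of an earlier result.

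There is essentially no obstacle to overcome here; the entire content of the corollary is the reformulation of $\mathcal{B}$ in terms of the dual cone $\sigma^{\vee}$, and all the substantive work—computing the generators of $C$ and proving $\sigma^{\vee} = C$—was already carried out in Theorem \ref{cones}. The value of stating this as a separate corollary is organizational: it packages the algebra $\mathcal{B}$ in precisely the toric form $\sfk[\sigma^{\vee} \cap \mathbb{Z}^{n+2}]$ that the subsequent sections will feed into the machinery of \cite{CLS} and \cite{BG}, where invariants are computed directly from the dual cone and its Hilbert basis.
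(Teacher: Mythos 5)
Your proof is correct and is exactly the argument the paper intends (the corollary is stated without proof, as an immediate consequence of Fact \ref{old}, the identity $Q = C \cap \mathbb{Z}^{n+2}$, and the duality $\sigma^{\vee} = C$ from Theorem \ref{cones}). The chain of equalities you write is precisely the justification.
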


In summary, we note that the intersection algebra $\mathcal B = \cB (\ba, \bb) = \sfk[Q] =\sfk[\sigma^{\vee} \cap \mathbb{Z}^{n+2}]$, and how we choose to describe $\mathcal B$ depends upon our perspective.

\medskip

\begin{remark} \label{vectors} The vectors ${\bolde}_1, {\bolde}_2$, $\pmba_1, \pmba_2,\dots, \pmba_n, \pmbb_1,\dots, \pmbb_n$ are the 
{\it primitive vectors} of $\mathcal B$ in the sense \cite[p.~2708]{HWY} that they are a minimal system of generators for $\sigma$, none can be replaced with a multiple of $r \in \mathbb R$ with $0 < r < 1$, and $\sigma^{\vee} = \{u \in \mathbb R^{n+2} \mid \langle u, v \rangle \geq 0\}$.  The vectors ${\bolde}_3, \dots, {\bolde}_{n+2}, {\bf w_0}, {\bf w_1}, \dots, {\bf w}_n, {\bf w}_{n+1}$ determine the {\it nonnegative rays of the cone} $C$ and are the generators of this cone. (See Fact \ref{HScalc}.)
\end{remark}

\subsection{$\mathbb Q$-Gorenstein, Gorenstein, and the Divisor Class Group}

\begin{proposition} \label{classgroup} Let $Q=Q(\ba, \bb)$ be the monoid in $\mathbb Z^{n+2}$, as above.  Then $$\Cl(\cB(\ba, \bb)) = \Cl(Q) \cong \mathbb Z^n.$$
\end{proposition}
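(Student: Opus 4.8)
The plan is to compute the divisor class group of the normal affine toric variety $\Spec \cB$ via the standard exact sequence for toric divisor class groups, as presented in \cite{CLS}. Since $\cB = \sfk[\sigma^\vee \cap \mathbb Z^{n+2}]$ is the coordinate ring of the affine toric variety $U_\sigma$ associated to the full-dimensional cone $\sigma \subseteq \mathbb R^{n+2}$, and $\gp(Q) = \mathbb Z^{n+2}$ by Lemma \ref{group} (so there is no torus-factor correction), Theorem 4.1.3 of \cite{CLS} gives a short exact sequence
\begin{equation*}
\mathbb Z^{n+2} \xrightarrow{\ M\ } \mathbb Z^{\,\sigma(1)} \longrightarrow \Cl(\cB) \longrightarrow 0,
\end{equation*}
where $\sigma(1)$ is the set of rays (one-dimensional faces) of $\sigma$, the middle group is the free abelian group on the toric prime divisors $D_\rho$ indexed by those rays, and the map $M$ sends $m \in \mathbb Z^{n+2}$ to $\sum_{\rho} \langle m, u_\rho\rangle D_\rho$, with $u_\rho$ the primitive ray generator. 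The first map is injective here because $\sigma$ spans $\mathbb R^{n+2}$, so in fact $\Cl(\cB) \cong \coker(M)$.

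First I would identify the rays of $\sigma$ explicitly. By the Notation preceding Theorem \ref{cones} and Remark \ref{vectors}, the minimal generators of $\sigma$ are the $2n+2$ primitive vectors $\bolde_1, \bolde_2, \pmba_1, \dots, \pmba_n, \pmbb_1, \dots, \pmbb_n$, so $|\sigma(1)| = 2n+2$ and the middle free group is $\mathbb Z^{2n+2}$. The crucial point is that these are already primitive and pairwise distinct rays, which follows from the non-degeneracy assumption in force (all $a_i, b_i$ positive and strictly fan ordered). Thus the task reduces to computing the cokernel of the $(2n+2)\times(n+2)$ integer matrix $M$ whose columns are the standard basis vectors $\e_1,\dots,\e_{n+2}$ of $\mathbb Z^{n+2}$ paired against these ray generators; equivalently, the rows of $M$ are precisely the coordinate vectors $\bolde_1, \bolde_2, \pmba_1, \dots, \pmba_n, \pmbb_1, \dots, \pmbb_n$ written out.

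Next I would compute this cokernel by explicit row and column reduction over $\mathbb Z$. Writing the generators of $\mathbb Z^{2n+2}$ as $D_1, D_2, A_1,\dots,A_n, B_1,\dots,B_n$ (dual to the respective primitive vectors), the relations imposed by $M$ read: from $\e_1$, $D_1 - \sum_k a_k A_k = 0$; from $\e_2$, $D_2 - \sum_k b_k B_k = 0$; and from $\e_{k+2}$ for $k=1,\dots,n$, $A_k + B_k = 0$. The last $n$ relations let me eliminate each $B_k = -A_k$, and the first two relations let me eliminate $D_1$ and $D_2$. After these substitutions the surviving free generators are $A_1,\dots,A_n$ with no remaining relations, which yields $\Cl(\cB) \cong \mathbb Z^n$ precisely. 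I would then record $\Cl(Q) = \Cl(\cB)$ since the class group is an intrinsic invariant of the normal semigroup ring, so the two notations agree.

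The main obstacle is the verification that the listed primitive vectors really are all of the rays of $\sigma$ and that each is primitive in $\mathbb Z^{n+2}$ — i.e., that $\sigma(1)$ has exactly $2n+2$ elements with the stated generators, with no redundancy or hidden common factors. This is where the non-degeneracy hypothesis (strict inequalities $\lambda_i > \lambda_{i+1}$, all entries positive) is genuinely used: it guarantees that none of the $\pmba_i, \pmbb_j$ lies on a ray generated by another, so that the presentation matrix $M$ has exactly the rows written above. Granting that structural fact, the cokernel computation is the routine Smith-normal-form elimination sketched above, and the freeness of the result (no torsion) falls out because every pivot in the reduction is $\pm 1$.
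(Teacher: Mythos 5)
Your proposal is correct and is essentially the paper's own argument in different clothing: the exact sequence $\mathbb Z^{n+2}\to\mathbb Z^{2n+2}\to\Cl(\cB)\to 0$ from \cite[Theorem 4.1.3]{CLS} is the same computation as the Bruns--Gubeladze ``standard map'' of \cite[Corollary 4.56]{BG} that the paper invokes, and your matrix of pairings against $\bolde_1,\bolde_2,\pmba_1,\dots,\pmba_n,\pmbb_1,\dots,\pmbb_n$ is exactly the paper's matrix $A$, with your explicit elimination ($B_k=-A_k$, then $D_1,D_2$ expressed in the $A_k$) being a cleaner justification of the cokernel than the paper's ``row equivalent to $U$'' remark. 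One small correction: strict non-degeneracy of the fan is not needed (and is not assumed in the proposition) --- positivity of all $a_i,b_i$, the standing hypothesis after Proposition \ref{positive}, already guarantees that the $2n+2$ listed vectors are pairwise non-proportional primitive generators of distinct rays of $\sigma$, equivalently that all $2n+2$ half-spaces cut out facets of $C$.
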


\begin{proof} We apply \cite[Corollary 4.56]{BG}.  (See also \cite{Ch}.)  Since there are $2n+2$ facets of $C = H^+_{\bolde_1} \cap H^+_{\bolde _2} \cap H^+_{\pmba_1} \cap  \cdots \cap H^+_{\pmba_n} \cap H^+_{\pmbb_1} \cap \cdots \cap H^+_{\pmbb_n}$, the standard map $Q \to \mathbb N_+^{2n+2}$ (see \cite[p.~55]{BG}) is defined by the matrix $A = [{\bolde}_1^T \hskip.05in {\bolde}_2^T \hskip.05in \pmba_1^T \cdots \pmba_n^T \hskip.05in \pmbb_1^T \cdots \pmbb_n^T]$.  In particular, multiplication by $A$  gives a $\mathbb Z$-linear map from $\gp(Q(\ba, \bb))$, which equals $\mathbb{Z}^{n+2}$ by Lemma \ref{group}, to $\mathbb{Z}^{2n+2}$.  Since $A$ is row equivalent to $U = [{\bolde}_1^T \cdots {\bolde}_{n+2}^T \hskip.05in {\bf 0} \cdots {\bf 0}]$, it follows that $\Cl(Q) \cong \mathbb Z^{2n+2}/\im(U) \cong \mathbb Z^n$.  Finally, since $\mathcal B = \sfk[Q]$, by \cite[Corollary 4.60]{BG}, $\Cl(\mathcal B) = \Cl(\sfk) \oplus \Cl(Q) \cong\Cl(Q)$. 
\end{proof}

\begin{proposition}
\label{q-gorenstein}
 The intersection algebra $\mathcal B$ is $\mathbb Q$-Gorenstein if and only if $a_i=b_i$ for each $i = 1, \dots, n$. Moreover, in this case, the intersection algebra $\cB$ is a hypersurface and $$\mathcal B = \sfk [x_1, \ldots, x_n, A, B, C]/(AB-x_1^{a_1} \cdots x_n^{a_n} C).$$
\end{proposition}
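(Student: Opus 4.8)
The plan is to separate the equivalence from the ``moreover'' clause. For the equivalence I would use the toric description $\mathcal B=\sfk[\sigma^\vee\cap\mathbb Z^{n+2}]$ from Corollary~\ref{primvect}, so that $\Spec\mathcal B$ is the affine toric variety of the cone $\sigma$. By Remark~\ref{vectors} the primitive ray generators of $\sigma$ are exactly ${\bolde}_1,{\bolde}_2,\pmba_1,\dots,\pmba_n,\pmbb_1,\dots,\pmbb_n$ (there are $2n+2$ of them, matching the $2n+2$ facets of $\sigma^\vee=C$ from Proposition~\ref{classgroup}). The standard toric criterion for the $\mathbb Q$-Gorenstein property (see \cite{CLS}) states that $\mathcal B$ is $\mathbb Q$-Gorenstein if and only if there is $m=(m_1,\dots,m_{n+2})\in\mathbb Q^{n+2}$ with $\langle m,u_\rho\rangle=1$ for every primitive ray generator $u_\rho$ of $\sigma$ (equivalently, these generators lie on a common affine hyperplane at lattice height one).

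Substituting the $2n+2$ generators into this system is immediate: ${\bolde}_1,{\bolde}_2$ force $m_1=m_2=1$; $\pmba_i$ forces $-a_i+m_{i+2}=1$, i.e.\ $m_{i+2}=1+a_i$; and $\pmbb_i$ forces $m_{i+2}=1+b_i$. These equations are compatible for every $i$ if and only if $a_i=b_i$ for all $i$, which gives the stated equivalence. I would remark that when $a_i=b_i$ the resulting solution $m=(1,1,1+a_1,\dots,1+a_n)$ is integral, so in fact $\mathcal B$ is Gorenstein, consistent with its being a hypersurface.

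For the ``moreover'' clause, assume $a_i=b_i$ for all $i$ and write $g:=x_1^{a_1}\cdots x_n^{a_n}$, so that $I=J=(g)$ and $I^r\cap J^s=(g^{\max(r,s)})$. Since $\lambda_i=1$ for all $i$, the vectors ${\bf w}_1,\dots,{\bf w}_n$ of Theorem~\ref{cones} collapse to the single vector $(1,1,a_1,\dots,a_n)$; together with ${\bf w}_0=(1,0,a_1,\dots,a_n)$ and ${\bf w}_{n+1}=(0,1,a_1,\dots,a_n)$ these correspond to the three monomials $A=gu$, $B=gv$, and $C=guv$. First I would check that $x_1,\dots,x_n,A,B,C$ generate $\mathcal B$ as a $\sfk$-algebra: a monomial $x_1^{t_1}\cdots x_n^{t_n}u^rv^s\in\mathcal B$ with (say) $r\ge s$ satisfies $t_i\ge a_ir$, and factors as $C^sA^{\,r-s}\prod_i x_i^{\,t_i-a_ir}$, with the symmetric factorization using $B$ when $s\ge r$. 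Next I would record the relation $AB=g^2uv=g\cdot C$, so that $f:=AB-x_1^{a_1}\cdots x_n^{a_n}C$ lies in the kernel of the induced surjection $\pi\colon\sfk[x_1,\dots,x_n,A,B,C]\twoheadrightarrow\mathcal B$.

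To conclude that $f$ generates all of $\ker\pi$, I would argue by dimension. Regarded as a polynomial of degree one in the indeterminate $C$ over the UFD $\sfk[x_1,\dots,x_n,A,B]$, $f$ has coprime coefficients $x_1^{a_1}\cdots x_n^{a_n}$ and $AB$, hence is primitive of degree one and therefore irreducible, so $(f)$ is prime and $\sfk[x_1,\dots,x_n,A,B,C]/(f)$ is a domain of dimension $(n+3)-1=n+2$. The surjection $\pi$ factors through a surjection of this domain onto $\mathcal B$, which is itself a domain of dimension $n+2$ by Theorem~\ref{properties}; a surjection of affine $\sfk$-domains of equal dimension is forced to be an isomorphism, since a nonzero kernel would be a nonzero prime and would strictly lower the dimension. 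This simultaneously identifies the presentation and exhibits $\mathcal B$ as a hypersurface. The step I expect to require the most care is pinning down the relation ideal: the generation claim must be made airtight and one must rule out further relations, and it is precisely the irreducibility-plus-dimension argument—rather than a direct computation of the toric ideal—that resolves this cleanly.
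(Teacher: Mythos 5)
Your proposal is correct and follows essentially the same route as the paper: the same criterion (a rational vector pairing to $1$ with every primitive generator of $\sigma$) yielding $a_i=b_i$, the same three extra generators $gu$, $gv$, $guv$, and the same dimension count to pin down the presentation. The only difference is that where the paper asserts the kernel is a height-one prime, hence principal, with generator ``easily seen'' to be $AB-x_1^{a_1}\cdots x_n^{a_n}C$, you instead prove that polynomial irreducible and compare dimensions of the two domains --- a slightly more explicit justification of the same final step.
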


\begin{proof}  Recall that $\mathcal B = \mathsf k[\sigma^{\vee} \cap \mathbb Z^{n+2}]$.  By \cite[Lemma-Definition 4.4]{HWY}, the aim is to find a vector $\omega \in Q \otimes_{\mathbb Z} \mathbb R$ such that $\langle \omega, - \rangle = 1$ for all of the primitive generators of $\sigma$; i.e., for ${\bolde}_1, {\bolde}_2$, and for all $\pmba_i, \pmbb_j$ above.  Set $\omega = (y_1, y_2, z_1, z_2, z_3, \dots, z_n)$.  From $\langle \omega, {\bolde}_1 \rangle = \omega \cdot (1,0,0,\dots,0) = 1$ it follows that $y_1 = 1$; likewise, $y_2=1.$  Next, from $\langle (1, 1, z_1,\dots, z_n), \pmba_i \rangle = 1$, one obtains $-a_i+z_i = 1$; i.e., $z_i = 1+a_i$.  Similarly, $z_i = 1+b_i$.  Thus, obviously $a_i = b_i$.

The set $\mathcal G$ (see Definition \ref{Hilbertsets}) for $\mathcal B$ is easy to write down in this case: $\mathcal G = \{ (1, 0, a_1, \ldots, a_n), $ $(0,1, a_1, \ldots, a_n), (1, 1, a_1, \ldots a_n) \},$ and hence $\mathcal B= \sfk [x_1, \ldots x_n, x_1^{a_1} \cdots x_n^{a_n} u, x_1^{a_1} \cdots x_n^{a_n} v, x_1^{a_1} \cdots x_n^{a_n} uv]$, which has dimension $n+2$. Map $\sfk [x_1, \ldots, x_n, A, B, C]$ to $\mathcal B$ by sending $x_i$ to $x_i$, for $i=1, \ldots, n$; $A$ to $x_1^{a_1} \cdots x_n^{a_n} u$; $B$ to $x_1^{a_1} \cdots x_n^{a_n} v$; and $C$ to $x_1^{a_1} \cdots x_n^{a_n} uv$. The kernel is a one dimensional prime ideal, hence principal. It can easily be seen that its generator is $AB-x_1^{a_1} \cdots x_n^{a_n} C$.
\end{proof}

\medskip
\begin{remark}
This result generalizes and extends Corollary 2.23 in~\cite{EM}.
\end{remark}


\section{Computing the Hilbert-Samuel Multiplicity}


The main result of this section is Theorem \ref{generalHS}, which computes the Hilbert-Samuel multiplicity of $\mathcal B = \sfk[Q]$ by calculating the volume of a union of polytopes.  The first two items provide the necessary machinery for our proof.

\begin{fact} \cite[p.~3]{BEF} \label{Vformula} For vertices $v_0, \dots, v_d \in \mathbb R^d$, let $\Delta(v_0, \dots, v_d)$ denote the associated simplex in $\mathbb R^d$.  Then
$$\vol(\Delta(v_0, \dots, v_d)) = \frac{\det(v_1-v_0, v_2-v_0, \dots, v_d - v_0)}{d!}.$$
\end{fact}

\begin{fact} \label{HScalc} \cite[Theorem 6.55]{BG} Suppose $Q \subseteq \mathbb Z^d$ is a positive affine monoid of rank $d$, and $I$ is a monomial ideal in $R = \sfk[Q]$ primary to the maximal ideal generated by all monomials $\neq 1$.   Let $\mathcal P(I)$ be the convex hull of the lattice points corresponding to the monomials in $I$ and $\mathcal C$ the cone generated by the nonnegative rays determined by the generators of $Q$. 
Then,
$$[\mathbb Z^d: \gp(Q)] \cdot \e(I, R) = d! \vol(\mathcal C \setminus \mathcal P(I)) \quad {\text{and}} \quad \vol(\mathcal C \setminus \mathcal P(I)) = \lim_{n \to \infty} \frac{\#(\mathcal C \backslash n \mathcal P(I) \cap \mathbb Z^d)}{n^d},$$
where although $\mathcal P(I)$ is an unbounded polyhedron in $\mathbb Z^{n+2}$, the region $\mathcal C \setminus \mathcal P(I)$ has finite (hyper) volume, and consists of the union of finitely many polytopes with apex {\bf 0} whose bases are the compact facets of $\mathcal P(I)$. 
\end{fact}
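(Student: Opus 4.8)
The plan is to reduce the statement to the asymptotic theory of the Hilbert--Samuel function and then translate length counts into lattice-point counts, and finally into volumes. Since $R = \sfk[Q]$ is an $\mathbb N$-graded ring of Krull dimension $d$ and $I$ is primary to the irrelevant (maximal) ideal, the multiplicity is governed by the leading term of the Hilbert--Samuel function; that is,
$$\e(I, R) = \lim_{n \to \infty} \frac{d!}{n^d}\, \length(R/I^n).$$
So it suffices to understand the asymptotic growth of $\length(R/I^n)$, and the second displayed identity in the statement is then simply the assertion that this growth is computed by a volume.

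Next I would exploit the combinatorial structure. Because $I$ is a monomial ideal of the semigroup ring $\sfk[Q]$, so is every power $I^n$, and hence $R/I^n$ has a $\sfk$-basis consisting of the monomials $x^q$, $q \in Q$, whose exponent does not lie among those of $I^n$. Thus $\length(R/I^n)$ equals the number of such lattice points. The exponents of monomials in $I^n$ are contained in the scaled Newton polyhedron $n\mathcal{P}(I)$, and every lattice point of $Q$ lying sufficiently deep inside $n\mathcal{P}(I)$ actually occurs as such an exponent; the discrepancy is supported near the boundary and is $O(n^{d-1})$. Consequently
$$\length(R/I^n) = \#\big( Q \cap (\mathcal{C} \setminus n\mathcal{P}(I)) \big) + o(n^d),$$
using that $Q = \mathcal{C} \cap \gp(Q)$ up to a lower-order correction discussed below.

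The third step passes from a lattice count to a volume. Writing $m = [\mathbb Z^d : \gp(Q)]$, the lattice $\gp(Q)$ has point density $1/m$ relative to $\mathbb Z^d$, so the standard Ehrhart-type asymptotic gives $\#(\gp(Q) \cap nK) = \tfrac{1}{m} n^d \vol(K) + o(n^d)$ for any region $K$ of finite volume. Since $\mathcal{C}$ is a cone we have $n\mathcal{C} = \mathcal{C}$, hence $\mathcal{C} \setminus n\mathcal{P}(I) = n(\mathcal{C} \setminus \mathcal{P}(I))$; applying the asymptotic to $K = \mathcal{C} \setminus \mathcal{P}(I)$ yields both the second displayed identity and, after substitution into the multiplicity limit,
$$[\mathbb Z^d : \gp(Q)] \cdot \e(I, R) = d!\, \vol(\mathcal{C} \setminus \mathcal{P}(I)).$$

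I expect the main obstacle to be the geometric finiteness together with the control of the $o(n^d)$ discrepancies. One must first show that $\mathcal{C} \setminus \mathcal{P}(I)$ has finite volume: this is exactly where the hypothesis that $I$ is $\fm$-primary is essential, since it forces $\mathcal{P}(I)$ to meet every extreme ray of $\mathcal{C}$, so that the ``lower'' boundary of $\mathcal{P}(I)$ consists of compact facets visible from the origin and decomposes $\mathcal{C} \setminus \mathcal{P}(I)$ into finitely many bounded pieces with apex $\mathbf 0$. The remaining bookkeeping --- that the boundary lattice points, the lattice points of $n\mathcal{P}(I)$ not realized as exponents of $I^n$, and the difference between $Q$ and its normalization $\mathcal{C} \cap \gp(Q)$ all contribute only lower-order terms --- is routine but must be handled with care; the cleanest route is to observe that the non-normal locus of $\sfk[Q]$ has dimension at most $d-1$, so replacing $Q$ by $\mathcal{C} \cap \gp(Q)$ changes $\length(R/I^n)$ by $O(n^{d-1})$ and hence does not affect the leading coefficient.
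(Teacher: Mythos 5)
First, a point of comparison: the paper does not prove this statement at all --- it is quoted as a Fact from \cite[Theorem 6.55]{BG}, so there is no internal proof to measure you against, only the citation. That said, your outline is essentially the standard proof of that theorem: express $\e(I,R)$ as $\lim_n d!\,\length(R/I^n)/n^d$, identify $\length(R/I^n)$ with the number of lattice points of $Q$ outside the exponent set of $I^n$, compare that exponent set with $n\mathcal P(I)$, and finish with the lattice-point density statement for the index-$m$ sublattice $\gp(Q)$ (and for $\mathbb Z^d$ itself, giving the second displayed identity) together with the homogeneity $\mathcal C\setminus n\mathcal P(I)=n(\mathcal C\setminus \mathcal P(I))$. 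Your finiteness discussion --- that $\fm$-primariness forces $\mathcal P(I)$ to meet every extreme ray of $\mathcal C$, so the complement is a finite union of pyramids with apex $\mathbf 0$ over the compact facets --- is also correct, as is the reduction to the normalization via $\dim(\overline R/R)\le d-1$ and additivity of multiplicity.

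The one step you should not file under ``routine bookkeeping'' is the claim that the lattice points of $Q\cap n\mathcal P(I)$ which are \emph{not} exponents of monomials of $I^n$ number only $O(n^{d-1})$. Those points are (up to the normalization issue you already address) the exponents of $\overline{I^n}$, and what is actually needed is a uniform containment $\overline{I^{\,n+c}}\subseteq I^n$ with $c$ independent of $n$ --- equivalently, Rees's theorem that the integral-closure filtration computes the same multiplicity as $\{I^n\}$. Without such a linear bound, ``the discrepancy is supported near the boundary'' is precisely the assertion to be proved: a priori the unrealized lattice points could occupy a positive-density subregion of $n\mathcal P(I)$ and change the leading coefficient. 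Once the containment is available (it holds here because $\sfk[\mathcal C\cap\gp(Q)]$ is module-finite over $\sfk[Q]$, so the Rees algebra of the normal filtration is finite over that of $I$), the shell $n\mathcal P(I)\setminus(n+c)\mathcal P(I)$ has bounded width at scale $n$ and your $O(n^{d-1})$ estimate follows. So: correct strategy, matching the source's, but with one genuinely non-trivial input left implicit.
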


To set the stage for the proof, let $\ba, \bb$ be two positive integer vectors and $\mathcal H=\{v_1, \ldots, v_h\}$ the Hilbert set for $\mathcal B(\ba, \bb)$. We arrange our indexing of vectors in $\mathcal H$ in a counterclockwise manner, so that we start with $v_1=(1,0)$, order the $v=(r,s)$ via increasing slopes $s/r$, and end with $v_h =(0,1).$ Moreover set ${\bf u}_i= (v_i, \bt(v_i))$, $i=1, \ldots, h$.  Then, in our case, the generators of $\mathcal C$ referenced in Fact \ref{HScalc} are the generators of the cone $C$ in Remark \ref{vectors}.

\medskip
\begin{proposition} \label{sim-vol}
Using the notations above, the simplex $S_i$ generated by ${\bf 0}, {\bolde}_3, \ldots, {\bolde}_{n+2}, {\bf u}_{i}, {\bf u}_{i+1}$ has volume $1/(n+2)!$, for each $i = 1, \dots, h-1$.

\end{proposition}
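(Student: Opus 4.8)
The plan is to evaluate $\vol(S_i)$ directly from Fact~\ref{Vformula}, taking $\mathbf 0$ as the base vertex. Since $S_i$ is the simplex on the $n+3$ vertices $\mathbf 0, {\bolde}_3, \ldots, {\bolde}_{n+2}, \bu_i, \bu_{i+1}$ in $\mathbb R^{n+2}$ (so $d=n+2$), Fact~\ref{Vformula} gives
\[
\vol(S_i)=\frac{1}{(n+2)!}\,\bigl|\det M\bigr|,\qquad M=\bigl[\,{\bolde}_3\ \cdots\ {\bolde}_{n+2}\ \ \bu_i\ \ \bu_{i+1}\,\bigr],
\]
the matrix whose columns are the indicated difference vectors from $\mathbf 0$. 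Everything then reduces to showing $|\det M|=1$.

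First I would exploit the block structure of $M$. In the coordinates where positions $1,2$ record $(r,s)$ and positions $3,\ldots,n+2$ record $\bt$, the columns ${\bolde}_3,\ldots,{\bolde}_{n+2}$ form an $n\times n$ identity in rows $3,\ldots,n+2$ and vanish in rows $1,2$, while $\bu_i=(r_i,s_i,\bt(v_i))$ and $\bu_{i+1}=(r_{i+1},s_{i+1},\bt(v_{i+1}))$ contribute the $2\times 2$ array with rows $(r_i,r_{i+1})$ and $(s_i,s_{i+1})$ in rows $1,2$. Subtracting the appropriate integer multiples of ${\bolde}_3,\ldots,{\bolde}_{n+2}$ from the last two columns clears their lower $n$ entries without changing the determinant, leaving a block matrix whose determinant is, up to sign, $r_i s_{i+1}-r_{i+1}s_i$. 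In particular the $\bt$-data is entirely irrelevant, and
\[
\vol(S_i)=\frac{\bigl|r_i s_{i+1}-r_{i+1}s_i\bigr|}{(n+2)!}.
\]

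It therefore remains to prove the planar identity $|r_i s_{i+1}-r_{i+1}s_i|=1$; equivalently, that consecutive members $v_i$ and $v_{i+1}$ of the Hilbert set $\mathcal H$, listed by increasing slope, form a $\mathbb Z$-basis of $\mathbb Z^2$, i.e., $\Delta(\mathbf 0,v_i,v_{i+1})$ is a unimodular triangle. This is the heart of the matter and where the fan enters. The key point is that the primitive generator of every ray $(b_j,a_j)$ is irreducible in the monoid of the adjacent segment, hence belongs to $\mathcal H$; consequently, when $\mathcal H$ is ordered by slope, each consecutive pair $v_i,v_{i+1}$ in fact lies in a single segment $Q_j=C_j\cap\mathbb Z^2$ and is adjacent within the Hilbert basis $\mathcal H_j$ of that two-dimensional cone. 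The statement then becomes the classical fact that adjacent elements of the Hilbert basis of a $2$-dimensional rational cone, ordered by angle, span the lattice (the Hirzebruch--Jung/continued-fraction description; see \cite{CLS} and \cite{BG}). One clean way to see it is that the Hilbert basis elements of $C_j$ are exactly the lattice points on the compact part of the boundary of $\operatorname{conv}\bigl((C_j\cap\mathbb Z^2)\setminus\{\mathbf 0\}\bigr)$; for two such points consecutive along this boundary, the triangle they span with $\mathbf 0$ can contain no further lattice point (a lattice point on $[\mathbf 0,v_i]$ or $[\mathbf 0,v_{i+1}]$ would violate primitivity, and an interior one would lie strictly between $\mathbf 0$ and a boundary edge, hence outside the convex hull that is supposed to contain it), so by Pick's theorem it has area $1/2$ and $|\det(v_i,v_{i+1})|=1$.

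The main obstacle is precisely this last planar claim. The volume computation itself is routine linear algebra once one notices that the identity block coming from ${\bolde}_3,\ldots,{\bolde}_{n+2}$ annihilates all dependence on $\bt$; the real content is the unimodularity of consecutive Hilbert vectors, which rests on the two-dimensional structure and on the fact that the ray generators separating the segments $Q_j$ themselves lie in $\mathcal H$, so that no consecutive pair genuinely straddles two segments. The fan-ordering hypothesis on $\ba,\bb$ is what guarantees that the slope-ordering of $\mathcal H$ is compatible with this segment decomposition.
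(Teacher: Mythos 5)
Your argument is essentially identical to the paper's (which is itself only a proof sketch): apply Fact~\ref{Vformula} with base vertex $\mathbf 0$, use the identity block from ${\bolde}_3,\ldots,{\bolde}_{n+2}$ to collapse the determinant to $|\det(v_i,v_{i+1})|$, and conclude by unimodularity of consecutive Hilbert basis elements ordered by slope. The only difference is that you supply a proof of that last unimodularity fact (via the segment decomposition and Pick's theorem), whereas the paper simply cites \cite{MS} and \cite{CDE} for it; your filled-in argument is correct.
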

\begin{proof}[Proof sketch.] 
We apply Fact \ref{Vformula}, taking $v_0 = {\bf 0}$.  Thus,
$$\vol(S_i) = \frac{|\det({\bolde}_3, \ldots, {\bolde}_{n+2}, {\bf u}_{i}, {\bf u}_{i+1})|}{(n+2)!},$$
since the simplex is embedded in an $n+2$-dimensional space.  Next, because ${\bolde}_3, \ldots, {\bolde}_{n+2}$ are standard basis vectors for $\mathbb R^{n+2}$, the calculation of $\det({\bolde}_3, \ldots, {\bolde}_{n+2}, {\bf u}_{i}, {\bf u}_{i+1})$ simplifies to $1^n \cdot \det(v_i,  v_{i+1})$, where $v_i=(r_i,s_i)$ and $v_{i+1}=(r_{i+1},s_{i+1})$ are successive elements in the Hilbert basis.  Finally, $|\det(v_i, v_{i+1})| = 1$ since the elements $v_i$ are ordered in a counterclockwise manner (see, e.g., \cite[Example 7.19]{MS} or Section 2 in~\cite{CDE}).
\end{proof}

\begin{example}  In order to obtain some perspective on the different geometric regions being calculated throughout this paper, see Appendix for a 3-D rendering of the volume for the standard example $\B_R((x),(x))$, where $R = \sfk[x]$.
\end{example}

\begin{theorem} \label{generalHS} Let $\ba, \bb$ be two positive integer vectors and $\mathcal B= \mathcal B(\ba, \bb) = \sfk[Q]$. If $\fm$ is the ideal in $\mathcal B$ generated by the monomials in $Q$, then ${\rm e}(\fm, \mathcal B) = h-1$. Furthermore, if the fan is non-degenerate, ${\rm e}(\fm, \cB)=| \mathcal H_0 | +  | \mathcal H_1 | + \cdots +  | \mathcal H_n | - n-1$.
\end{theorem}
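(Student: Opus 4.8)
The plan is to realize $\e(\fm, \cB)$ as a normalized volume through Fact \ref{HScalc}, applied with $d = n+2$, $R = \cB = \sfk[Q]$, and the monomial ideal $I = \fm$, which is primary to the irrelevant maximal ideal by construction. By Lemma \ref{group} we have $\gp(Q) = \mathbb Z^{n+2}$, so the lattice index $[\mathbb Z^{n+2} : \gp(Q)]$ equals $1$, and Fact \ref{HScalc} collapses to the geometric identity
$$\e(\fm, \cB) = (n+2)! \cdot \vol(C \setminus \mathcal P(\fm)),$$
where $C$ is the cone of Theorem \ref{cones} and $\mathcal P(\fm)$ is the convex hull of the nonzero lattice points of $Q$. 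Fact \ref{HScalc} further records that $C \setminus \mathcal P(\fm)$ is the union of the polytopes with apex ${\bf 0}$ whose bases are the compact facets of $\mathcal P(\fm)$, so everything reduces to identifying those compact facets.

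The crux of the argument, which I expect to be the main obstacle, is to prove that the compact facets of $\mathcal P(\fm)$ are exactly the $(n+1)$-dimensional simplices
$$F_i = \operatorname{conv}(\bolde_3, \dots, \bolde_{n+2}, {\bf u}_i, {\bf u}_{i+1}), \qquad i = 1, \dots, h-1,$$
so that $C \setminus \mathcal P(\fm) = \bigcup_{i=1}^{h-1} S_i$ with $S_i$ the simplices of Proposition \ref{sim-vol}. Since the compact facets face the origin, their vertices are minimal (irreducible) generators of $\fm$, and by Fact \ref{old} these have exponent vectors among $\bolde_3, \dots, \bolde_{n+2}$ and ${\bf u}_1, \dots, {\bf u}_h$. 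Under the projection $(r, s, t_1, \dots, t_n) \mapsto (r, s)$, each $\bolde_{k+2}$ maps to the origin while ${\bf u}_i$ maps to $v_i$; the rays through $v_1, \dots, v_h$ subdivide the first quadrant into the $h-1$ two-dimensional sectors $\langle v_i, v_{i+1} \rangle$, and the unimodularity $|\det(v_i, v_{i+1})| = 1$ from the proof of Proposition \ref{sim-vol} guarantees that no lattice point of the quadrant lies strictly between consecutive $v_i$ and $v_{i+1}$. This forces every compact facet to project onto a single such sector and hence to be spanned by $\bolde_3, \dots, \bolde_{n+2}$ together with the lifts ${\bf u}_i, {\bf u}_{i+1}$ of its two bounding directions, giving exactly $F_i$. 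To finish the identification I would produce, for each $i$, a supporting functional that is constant on $F_i$, strictly smaller at ${\bf 0}$, and strictly larger on every remaining generator of $\fm$; this simultaneously certifies that each $F_i$ is a genuine compact facet and that the solids $S_i$ meet only along shared boundary faces, so their interiors are disjoint and they cover $C \setminus \mathcal P(\fm)$.

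Granting this decomposition, the conclusion is immediate. Proposition \ref{sim-vol} gives $\vol(S_i) = 1/(n+2)!$ for each of the $h-1$ simplices, whence $\vol(C \setminus \mathcal P(\fm)) = (h-1)/(n+2)!$ and
$$\e(\fm, \cB) = (n+2)! \cdot \frac{h-1}{(n+2)!} = h - 1,$$
proving the first assertion. For the second, when the fan is non-degenerate Definition \ref{Hilbertsets} records $h = \sum_{i=0}^{n} h_i - n$ with $h_i = |\mathcal H_i|$; substituting into $\e(\fm, \cB) = h - 1$ yields $\e(\fm, \cB) = |\mathcal H_0| + |\mathcal H_1| + \cdots + |\mathcal H_n| - n - 1$, as claimed.
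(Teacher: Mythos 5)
Your proposal is correct and follows essentially the same route as the paper: apply Fact \ref{HScalc} with $[\mathbb Z^{n+2}:\gp(Q)]=1$ from Lemma \ref{group}, decompose $C\setminus\mathcal P(\fm)$ into the $h-1$ simplices $S_i$ of Proposition \ref{sim-vol} (the paper certifies that consecutive simplices meet only in a common face by the same projection onto the first two coordinates you invoke), and sum the volumes $1/(n+2)!$. Your discussion of compact facets and supporting functionals is a slightly more explicit justification of the covering claim that the paper simply asserts, but it is not a different argument.
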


\begin{proof} We can apply Fact \ref{HScalc} to the normal ring $\cB$ and the ideal $\fm$. By Lemma~\ref{group}, $[\mathbb Z^{n+2}: \gp(Q)] = 1$, hence ${\rm e}(\fm, \cB) = (n+2)! \cdot \vol(C \setminus \mathcal P(\fm))$.  Let $\mathcal H = \{v_1, \ldots, v_h\}$, where the elements are ordered in a counterclockwise manner as above. According to~\cite[Theorem 2.8]{M}, $Q$ is generated by the set consisting of $\bu=(v, \bt(v))$, where $\bt(v) = (\max(a_ir, b_is))_{i=1, \ldots, n}$, for $v=(r,s) \in \mathcal H$.

Consider the simplex $S_i$ generated by ${\bf 0}, {\bolde}_3, \ldots, {\bolde}_{n+2}, \bu_{i}, \bu_{i+1}$, for any $i=1, \dots h-1$, which has volume $1/(n+2)!$, by Proposition \ref{sim-vol}. One can see that $S_i \subset C \setminus \mathcal P(\fm)$.  We claim that $S_i$ and $S_{i+1}$ intersect in the plane generated by
${\bf 0}, {\bolde}_3, \ldots, {\bolde}_{n+2}, \bu_{i+1}$. Indeed, if these sets intersect, then there exist nonnegative $\gamma_k, \mu_k$ such that
$\sum_k \gamma_k =1, \sum_k \mu_k =1$ and $$\sum_{k=3}^{n+2}(\gamma_k-\mu_k) {\bolde}_k = \mu_{1}\bu_{i+1}+ \mu_{2} \bu_ {i+2} - \gamma_{1}\bu_{i}-\gamma_{2} \bu_{i+1}.$$
Projecting onto the first two coordinates we get $\mu_2=\gamma_1=0$ and $\mu_1=\gamma_2$, hence the claim follows.  Next, note that $C \setminus \mathcal P(\fm)$ is the union of all simplexes $S_i$, $i=1, \dots, h-1$. Therefore, 
$$\vol (C \setminus \mathcal P(\fm)) = \sum_{i=1}^{h-1}\vol (S_i) = \frac{h-1}{(n+2)!}.$$ 
Thus, ${\rm e}(\fm, \mathcal B)= h-1$.

When the fan is non-degenerate,  $ h =| \mathcal H_0 | +  \cdots +  | \mathcal H_n | - n$ and the statement follows.
\end{proof}

The relation between the Hilbert-Samuel multiplicity and the embedding dimension shows that the intersection algebras of principal monomial ideals have {\it minimal multiplicity}\footnote{See Chapter 2 in~\cite{Sa} for more properties of rings having minimal multiplicity.}: $\nu -d +1= \e$, where $\nu$ is the embedding dimension (see Fact \ref{old}) and $\e$ is the Hilbert-Samuel multiplicity of the ring.  Specifically:

\begin{corollary}  \label{minmult}  Let $\mathcal B = \mathcal B(\ba, \bb)$.The embedding dimension of $\B$ is $h+n$; when the fan is non-degenerate, $\nu(\mathcal B) = | \mathcal H_0 | +  \cdots +  | \mathcal H_n | $.  In general, ${\rm e}(\fm,\mathcal B) = \nu(\mathcal B) - \dim \mathcal B + 1$ or, equivalently, $\e(\fm,\mathcal B) = \nu(\mathcal B) - n-1$.
\end{corollary}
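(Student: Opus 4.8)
The plan is to assemble the corollary entirely from results already in hand, so that the proof reduces to a short chain of substitutions with no genuine new content. The first assertion, that $\nu(\mathcal B) = h + n$, is exactly Fact~\ref{old}: there the minimal number of generators of $\fm$ is identified with the embedding dimension and computed to be $n + h$. I would simply cite it. For the non-degenerate refinement, I would invoke the relation $h = \sum_i h_i - n = |\mathcal H_0| + \cdots + |\mathcal H_n| - n$ recorded in Definition~\ref{Hilbertsets}, substitute it into $\nu(\mathcal B) = n + h$, and observe that the two $n$'s cancel, leaving $\nu(\mathcal B) = |\mathcal H_0| + \cdots + |\mathcal H_n|$.

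For the minimal-multiplicity identities I would combine three previously established inputs: the value $\e(\fm, \mathcal B) = h - 1$ from Theorem~\ref{generalHS}, the embedding dimension $\nu(\mathcal B) = n + h$ from Fact~\ref{old}, and the dimension $\dim \mathcal B = n + 2$ from Theorem~\ref{properties}. From the first two,
$$\nu(\mathcal B) - n - 1 = (n + h) - n - 1 = h - 1 = \e(\fm, \mathcal B),$$
which is the second stated form. Using $\dim \mathcal B = n + 2$,
$$\nu(\mathcal B) - \dim \mathcal B + 1 = (n + h) - (n + 2) + 1 = h - 1 = \e(\fm, \mathcal B),$$
which is the minimal-multiplicity formula $\e = \nu - \dim + 1$. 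Since $\dim \mathcal B = n + 2$, the two expressions for $\e(\fm, \mathcal B)$ agree, so both displayed forms are equivalent restatements of the same identity.

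The point I would flag, rather than an obstacle, is the bookkeeping of hypotheses: the general identities $\e = \nu - \dim + 1$ and $\e = \nu - n - 1$ require only $\nu = n + h$, $\dim = n + 2$, and $\e = h - 1$, none of which assume non-degeneracy, so they hold in full generality; the formula $\nu(\mathcal B) = |\mathcal H_0| + \cdots + |\mathcal H_n|$ is the only claim that uses the non-degenerate relation from Definition~\ref{Hilbertsets}, and I would state it as such. There is no real difficulty here—the substantive work has already been carried out in Fact~\ref{old}, Theorem~\ref{properties}, and Theorem~\ref{generalHS}—so the proof is a verification that these pieces fit together.
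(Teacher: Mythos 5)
Your proof is correct and follows exactly the same route as the paper: cite Fact~\ref{old} for $\nu(\mathcal B)=h+n$, substitute $h=\sum_i h_i - n$ in the non-degenerate case, and combine $\e(\fm,\mathcal B)=h-1$ from Theorem~\ref{generalHS} with $\dim\mathcal B=n+2$ to get both forms of the minimal-multiplicity identity. Your remark that non-degeneracy is needed only for the $|\mathcal H_0|+\cdots+|\mathcal H_n|$ formula is an accurate reading of the hypotheses.
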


\begin{proof} First of all, $\nu(\B) = h+n$, as it is necessary to add the generators $x_1, \dots, x_n$ to the list of generators of $\m$.  If the fan is non-generate, then $h =  \sum_i h_i - n$, and hence, $\nu(\mathcal B) = | \mathcal H_0 | +  \cdots +  | \mathcal H_n |$.  Since 
$\dim \mathcal B = n+2$, it is straightforward to see that $\rm e(\fm,\mathcal B) = \nu(\mathcal B) - \dim \mathcal B + 1$. 
\end{proof}

We highlight the case where $a_i = b_i$ for all $i = 1, \dots, n$. 

\begin{proposition} \label{HSa=b} Let $n \geq 1$ and $\ba \in \mathbb{Z}^n$, a positive vector. The Hilbert-Samuel multiplicity of $\mathcal B = \mathcal B(\ba, \ba) = \sfk [x_1, \ldots, x_n, A, B, C]/(AB-x_1^{a_1} \cdots x_n^{a_n} C)$ is  $2$.
\end{proposition}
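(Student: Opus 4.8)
The plan is to reduce the claim to a count of the Hilbert number $h$ and then invoke Theorem~\ref{generalHS}. That theorem gives $\e(\fm, \cB) = h - 1$ for \emph{any} pair of positive vectors (the non-degeneracy hypothesis is needed only for the refined count in terms of the $|\mathcal H_i|$), so it suffices to show that the Hilbert set $\mathcal H$ of $\cB(\ba, \ba)$ has exactly three elements. This matters here because the present fan is degenerate: when $n \geq 2$ the ratios $\lambda_1 = \cdots = \lambda_n = 1$ are not strictly decreasing, so only the general form $\e = h-1$ is available.

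To compute $h$, I would analyze the fan $\sum_{\ba, \ba}$ in $\mathbb R^2$. Since $a_i = b_i$ forces $\lambda_i = a_i/b_i = 1$ for all $i$, the generating vectors $(b_i, a_i) = (a_i, a_i)$ all point along the diagonal direction $(1,1)$, while the boundary vectors are $(b_0, a_0) = (0,1)$ and $(b_{n+1}, a_{n+1}) = (1,0)$. Hence the only distinct ray directions are $(0,1)$, $(1,1)$, and $(1,0)$: the intermediate cones $C_1, \dots, C_{n-1}$ collapse to the single ray spanned by $(1,1)$, and the first quadrant is split into just two two-dimensional cones, $C_0 = \cone((0,1),(1,1))$ and $C_n = \cone((1,1),(1,0))$. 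Each is unimodular, since $|\det((0,1),(1,1))| = |\det((1,1),(1,0))| = 1$, so the Hilbert basis of each cone is precisely its pair of primitive ray generators and no interior lattice point contributes. Taking the union, $\mathcal H = \{(1,0), (1,1), (0,1)\}$, so $h = 3$; this agrees with the set $\mathcal G$ already recorded in the proof of Proposition~\ref{q-gorenstein}. Theorem~\ref{generalHS} then yields $\e(\fm, \cB) = h - 1 = 2$.

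As an independent cross-check, the same value follows from the hypersurface description in Proposition~\ref{q-gorenstein}, where $\cB = \sfk[x_1, \dots, x_n, A, B, C]/(AB - x_1^{a_1}\cdots x_n^{a_n}C)$. The relation lies in the square of the homogeneous maximal ideal, because both of its monomials have order at least $2$ (indeed $a_1 + \cdots + a_n + 1 \geq 2$), so the presentation is minimal and $\nu(\cB) = n + 3$. Corollary~\ref{minmult} then gives $\e(\fm, \cB) = \nu(\cB) - n - 1 = 2$ at once.

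The only genuinely delicate point is the Hilbert-basis computation: one must verify that the degenerate intermediate cones $C_1, \dots, C_{n-1}$ contribute nothing beyond the single vector $(1,1)$, and that the two surviving two-dimensional cones are unimodular, so that $\mathcal H$ acquires no extra lattice points. Once $h = 3$ is secured, the conclusion is immediate from the results already established.
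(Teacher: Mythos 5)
Your proposal is correct and follows essentially the same route as the paper: identify the Hilbert set as $\{(1,0),(1,1),(0,1)\}$ (since all $(b_i,a_i)$ lie on the diagonal), so $h=3$, and apply Theorem~\ref{generalHS} to get $\e(\fm,\cB)=h-1=2$. Your extra care about the degenerate fan and the unimodularity of the two surviving cones is a welcome elaboration, and your cross-check via Corollary~\ref{minmult} plays the same role as the paper's alternative remark that a hypersurface has multiplicity equal to its order.
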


\begin{proof}  As observed in Proposition \ref{q-gorenstein}, the Hilbert set can be easily described in this case. Since each point $(b_i, a_i)$ lies on the line $y=x$, the set is $\mathcal H= \{(1,0), (1,1), (0,1)\}$. Therefore, $\rm e(\fm, \mathcal B) = 3-1$, as per Theorem \ref{generalHS}. Alternatively, the Hilbert-Samuel multiplicity of a hypersurface is its order, which in this case is 2.
\end{proof}

\begin{example} \cite[Example 3.7]{M}  Let $I = (x^5y^2)$ and $J = (x^2y^3)$.  Then the strings $\ba = (5,2)$ and $\bb= (3, 2)$ are fan ordered.
The Hilbert bases are $\mathcal H_0 = \{(0,1), (1,3), (2,5)\}, \mathcal H_1 = \{(1,1), (1,2), (2,5), (3,2)\}$, and $\mathcal H_2 = \{(1,0), (2,1), (3,2)\},$ thus, $\rm e(\fm, \mathcal B) = 7$ and $\nu(\mathcal B) = 10$. The dimension of the algebra is $4$.
\end{example}


\section{The $F$-signature}


Besides maintaining the notations from Section I, we additionally assume that $\sfk$ is an $F$-finite field of positive characteristic; that is, $[k: k^p] < \infty$.  We are interested in computing the $F$-signature of the intersection algebra, denoted $s(\mathcal B)$. The intersection algebra is an $\mathbb{N}$-graded ring over a field, and therefore one can talk about its $F$-signature; see Chapter II in~\cite{K}.

\begin{fact}[Von Korff, Theorem 3.2.3 in~\cite{K}]
\label{vonkorff}
Let $R$ be a toric ring that does not contain any torus factors. Let ${v}_i$, $i=1, \dots, m$ be its primitive vectors, and let $P_{\sigma}= \{ {\bf u} \in \mathbb{R}^n:  0 \leq \langle {\bf u}, {v}_i \rangle < 1, {\rm for \ all} \ i=1, \ldots, m \}$. Then $s(R) = \vol (P_\sigma)$.
\end{fact}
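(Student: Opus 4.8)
The plan is to compute $s(R)$ straight from its definition as the normalized asymptotic number of free direct summands of the iterated Frobenius pushforward, and to identify that count with a lattice-point count of $\frac{1}{q}\mathbb{Z}^d$ inside $P_\sigma$. Write $R=\sfk[\sigma^\vee\cap\mathbb{Z}^d]$ with $d=\dim R$, where $\sigma=\cone(v_1,\dots,v_m)$ and the $v_i$ are the primitive vectors; the hypothesis that $R$ has no torus factors guarantees that $\sigma$ is full-dimensional, so the $v_i$ span $(\mathbb{R}^d)^\ast$. For $q=p^e$, and assuming $\sfk$ perfect (the general $F$-finite case only multiplies the count by $[\sfk:\sfk^p]^e$, which cancels against the rank in the limit), one has $F^e_\ast R\cong\sfk[\sigma^\vee\cap\frac{1}{q}\mathbb{Z}^d]$ as an $R$-module. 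First I would decompose this pushforward along the finite grading group $\frac{1}{q}\mathbb{Z}^d/\mathbb{Z}^d$: for each coset $g$ the graded piece $N_g=\bigoplus_{m\in\sigma^\vee\cap(g+\mathbb{Z}^d)}\sfk\,x^m$ is a rank-one monomial (divisorial) $R$-module, and $F^e_\ast R=\bigoplus_g N_g$, so the free summands must be found among the $N_g$.

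The crux is the freeness criterion: $N_g\cong R$ if and only if the coset $g$ meets $P_\sigma$. For $m\in g+\mathbb{Z}^d$ lying in $\sigma^\vee$ we have $\langle m,v_i\rangle\geq 0$, and since each $v_i$ is primitive the values $\langle m,v_i\rangle$, as $m$ ranges over $g+\mathbb{Z}^d$, fill out the coset $\langle g,v_i\rangle+\mathbb{Z}$. I would argue that $N_g$ is free exactly when there is a single element $m_0\in g+\mathbb{Z}^d$ simultaneously minimizing every $\langle\,\cdot\,,v_i\rangle$ subject to nonnegativity; this forces $0\leq\langle m_0,v_i\rangle<1$ for all $i$, i.e. $m_0\in P_\sigma$, and conversely such an $m_0$ witnesses $\sigma^\vee\cap(g+\mathbb{Z}^d)=m_0+(\sigma^\vee\cap\mathbb{Z}^d)$, giving $N_g\cong R$. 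Because the $v_i$ span and the defining inequalities of $P_\sigma$ are half-open, each coset meets $P_\sigma$ in at most one point, so the number of free summands of $F^e_\ast R$ is precisely $\#\big(\tfrac{1}{q}\mathbb{Z}^d\cap P_\sigma\big)$.

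To finish, I would divide by the generic rank $q^d$ of $F^e_\ast R$ and let $q\to\infty$: the counting measure of $\frac{1}{q}\mathbb{Z}^d$ equidistributes, so $\#\big(\tfrac{1}{q}\mathbb{Z}^d\cap P_\sigma\big)/q^d\to\vol(P_\sigma)$ (with a fundamental domain of $\mathbb{Z}^d$ normalized to volume $1$), yielding $s(R)=\vol(P_\sigma)$. I expect the freeness criterion to be the main obstacle, specifically the ``only if'' direction: one must show rigorously that when no single corner $m_0$ realizes all the minima simultaneously, the module $N_g$ genuinely requires two or more generators and so contributes no free summand. Secondary care is needed to handle the factor $[\sfk:\sfk^p]$ when $\sfk$ is merely $F$-finite, and to confirm the compatibility of the half-open polytope $P_\sigma$ with the uniqueness of coset representatives and the volume normalization.
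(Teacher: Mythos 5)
The paper does not prove this statement at all: it is quoted verbatim as a known result, with the proof deferred to Von Korff's thesis (Theorem 3.2.3 of \cite{K}). Your outline is, in substance, Von Korff's own argument --- decompose $F^e_*R$ into the rank-one divisorial modules $N_g$ indexed by $\frac{1}{q}\mathbb{Z}^d/\mathbb{Z}^d$, characterize the free ones by the half-open polytope $P_\sigma$, and pass to the limit --- and it is sound. The two places you flag are indeed where the work lies: for the ``only if'' direction of the freeness criterion you must use that each primitive $v_i$ cuts out an honest facet of $\sigma^\vee$, so that from any $m_0$ with $\langle m_0,v_i\rangle\geq 1$ you can subtract a lattice vector pairing to $-1$ with $v_i$ and very positively with the other $v_j$ to stay in $\sigma^\vee\cap(g+\mathbb{Z}^d)$ while lowering the $i$-th value, contradicting minimality; and ``the free summands must be found among the $N_g$'' needs Krull--Schmidt for the graded (or completed) category together with the indecomposability of the rank-one modules $N_g$. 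With those two points made precise, your argument is a complete proof of the cited fact.
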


When $n=1$, we give a formula for the $F$-signature of the ring $\B(\ba, \bb)$.   In the computations, it is necessary to consider two cases, namely when $a$ and $b$ are equal, or not.  When $n > 1$, the situation is much more complicated; we give a general formula for the $F$-signature only in the case that each $a_i = kb_i$ for some positive integer $k$.  The specific assumption $k=1$ forces the intersection algebra to be a binomial hypersurface, as per Proposition \ref{q-gorenstein}, but for $k > 1$ this is not the case, as it can be seen in Proposition~\ref{kb}.

\begin{proposition}  When $n=1$, the $F$-signature of $\cB= \cB(\ba, \bb)$ is
$$
s(\cB)=\begin{cases}
\frac{3b-1}{6ab} + \frac{1}{2a} = \frac{6b-1}{6ab} & \text{ when } a > b \\
 & \\
2\left(\frac{3a-1}{6a^2}\right) = \frac{3a-1}{3a^2}
   & \text{ when } a = b.
\end{cases}
$$
\end{proposition}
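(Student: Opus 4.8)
The plan is to compute $s(\cB)$ directly from Von Korff's formula (Fact \ref{vonkorff}), which says $s(\cB) = \vol(P_\sigma)$ where $P_\sigma = \{\bu \in \mathbb{R}^3 : 0 \leq \langle \bu, v_i \rangle < 1 \text{ for all primitive vectors } v_i\}$. When $n=1$ the ambient dimension is $n+2 = 3$, and by Remark \ref{vectors} the primitive vectors of $\cB$ are $\bolde_1 = (1,0,0)$, $\bolde_2 = (0,1,0)$, $\pmba_1 = (-a,0,1)$, and $\pmbb_1 = (0,-b,1)$. Writing $\bu = (x,y,z)$, the region $P_\sigma$ is carved out by the four double inequalities $0 \le x < 1$, $0 \le y < 1$, $0 \le -ax + z < 1$, and $0 \le -by + z < 1$. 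So the first step is simply to write down these constraints and recognize $P_\sigma$ as a fundamental-domain-type polytope in $\mathbb{R}^3$ whose volume I must compute.

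The constraints $z \ge ax$ and $z \ge by$ together with $z < ax+1$ and $z < by+1$ mean that, for each fixed $(x,y)$ in the unit square $[0,1)^2$, the $z$-fiber is the interval $[\max(ax,by),\ \min(ax,by)+1)$, which is nonempty of length $1 - |ax - by|$ precisely when $|ax - by| < 1$, and empty otherwise. Hence the second step is to reduce the volume to a two-dimensional integral:
$$
\vol(P_\sigma) = \iint_{\substack{(x,y) \in [0,1)^2 \\ |ax-by| < 1}} \bigl(1 - |ax - by|\bigr)\, dx\, dy.
$$
This is the heart of the computation, and I would organize it by splitting the unit square along the diagonal band $|ax - by| < 1$ into the regions where $ax \ge by$ and $ax < by$, integrating $1 - (ax - by)$ and $1 - (by - ax)$ respectively over the appropriate sub-triangles and strips. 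The geometry of the band $\{|ax-by| < 1\}$ intersected with $[0,1)^2$ is where the case distinction $a > b$ versus $a = b$ enters: when $a=b$ the band is symmetric about the diagonal $x=y$ and the problem has extra symmetry, whereas when $a > b$ the lines $ax - by = \pm 1$ cut the square asymmetrically, forcing a separate accounting.

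I expect the main obstacle to be the careful bookkeeping of the planar integration regions — determining exactly which corners of the unit square fall inside the band $|ax-by|<1$, where the lines $ax-by = \pm 1$ exit the square, and splitting the integral accordingly so that the piecewise-linear integrand is handled on each piece. For $a=b$ one can exploit symmetry to halve the work: the contribution from $\{ax \ge by\}$ and $\{ax < by\}$ are equal, each giving $\frac{3a-1}{6a^2}$, which sums to the claimed $\frac{3a-1}{3a^2}$. For the case $a > b$, the asymmetry means I would compute the two signed pieces separately and verify they combine to $\frac{3b-1}{6ab} + \frac{1}{2a}$, then simplify to $\frac{6b-1}{6ab}$. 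Throughout, I would treat the half-open boundary conditions as measure-zero and ignore them, since they do not affect the volume, and I would sanity-check the final formula against the Hilbert-Kunz/F-signature consistency at small values (e.g. $a=b=1$, where the ring is a hypersurface and the formula should give $\frac{2}{3}$, agreeing with the known $F$-signature of $\sfk[A,B,C,x]/(AB - xC)$).
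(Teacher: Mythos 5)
Your proposal is correct and follows essentially the same route as the paper: both apply Von Korff's formula with the primitive vectors $\bolde_1,\bolde_2,\pmba,\pmbb$, reduce the volume of $P_\sigma$ to a planar integral of $1-|ax-by|$ over the band $|ax-by|<1$ inside the unit square, and split into the sub-regions $ax\le by$ and $by\le ax$ with the same case distinction between $a>b$ and $a=b$ arising from where the lines $ax-by=\pm1$ exit the square. The paper simply carries this out as explicit iterated integrals rather than isolating the fiber-length formula, so the two arguments are the same in substance.
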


\begin{proof} We will apply Fact~\ref{vonkorff} and calculate the volume of $P_{\sigma}$, where the primitive vectors of $\cB$ are ${\bolde}_1=(1,0,0), {\bolde}_2 = (0, 1, 0), \pmba =(-a,0, 1), \pmbb = (0, -b, 1)$, as per Remark \ref{vectors}.  Hence, the $F$-signature is the volume of the region bounded by the inequalities:
$$0 \leq x \leq 1, \qquad 0 \leq y \leq 1, \qquad ax \leq z \leq 1+ax, \quad {\text{ and }} \quad  by \leq z \leq 1 + by.$$
Thus, $\max(ax, by) \leq z \leq \min(1+ax, 1+by)$.  We calculate this volume in two main regions.

Consider the region where $ax \leq by$.  Then $\frac{a}{b}x \leq y \leq \frac{a}{b}x + \frac{1}{b}$ 
and the function $z$ is bounded below by $by$ and above by $1+ax$.  The volume of this region is:

$$\int_0^{\frac{b-1}{a}} \left( \int_{\frac{a}{b}x}^{\frac{a}{b}x+\frac{1}{b}} 1 + ax - by \, dy \right) \, dx  + \int_{\frac{b-1}{a}}^{\frac{b}{a}} \left( \int_{\frac{a}{b}x}^1 1 + ax - by \, dy \right) \, dx = \frac{b-1}{2ab} +  \frac{1}{3ab} = \frac{3b-1}{6ab}.$$

The second region is given $by \leq ax$.  In this case, $\frac{a}{b}x - \frac{1}{b} \leq y \leq \frac{a}{b}x$, and the function $z$ is bounded below by $ax$ and above by $1+by$.

\hskip.25in If $a > b$, then the volume is $\displaystyle{\frac{1}{3ab} + \frac{b-1}{2ab} +  \frac{1}{6ab} = \frac{1}{2a}}$, calculated via

$$\int_0^{\frac{1}{a}} \!\!\left( \int_0^{\frac{a}{b}x} \!\!1 + by - ax \, dy \!\right) dx + \int_{\frac{1}{a}}^{\frac{b}{a}} \!\!\left( \int_{\frac{a}{b}x- \frac{1}{b}}^{\frac{a}{b}x} \!\!1 + by - ax \, dy \!\right) dx + \int_{\frac{b}{a}}^{\frac{b+1}{a}} \!\!\left( \int_{\frac{a}{b}x- \frac{1}{b}}^1 \!\!1 + by - ax \, dy \!\right) dx,$$

\hskip.25in If $a=b$, then the above limit of integration $\frac{b+1}{a}$ exceeds 1, hence instead we compute:

$$\int_0^{\frac{1}{a}} \left( \int_0^{x} 1 + ay - ax \, dy \right) \, dx  + \int_{\frac{1}{a}}^1 \left( \int_{x- \frac{1}{a}}^x 1 + ay - ax \, dy \right) \, dx  = \frac{1}{3a^2} + \frac{a-1}{2a^2} = \frac{3a-1}{6a^2}.$$

The sum of these results give the formul{\ae} in the statement. 
\end{proof} 

Before we state the next result, which allows for $n \geq 1$, we introduce some notation:

Let $n \geq 1$ and $\bb =(b_i: i = 1, \dots, n)$ be a positive integer vector such that $b_1 \geq b_2 \geq \ldots \geq b_n$. Let $S_i(b_1, \ldots, b_n)$ denote the elementary symmetric polynomial of degree $i$ in $b_1, \ldots, b_n$.

Denote $$A = \int_0^{1-1/b_1} \int_0^{1/b_1} \prod_{i=1}^n (1-b_iv) \ dv \ du,$$ 
 $$B = \int_{0}^{1/b_1} \sum_{i=0}^n (-1)^iS_i(b_1,..., b_n)\frac{u^{i+1}}{i+1}  \, du.$$

\begin{lemma}
\label{int}
With the notation introduced above, 
$$A = \left(1- \frac{1}{b_1}\right)\sum_{i=1}^{n} (-1)^{i-1}\frac{S_{i-1}(b_2, \ldots, b_n)}{i(i+1)b_1^{i}}.$$

\end{lemma}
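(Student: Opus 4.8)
The plan is to exploit the fact that the integrand of $A$ is independent of the variable $u$. Since $\prod_{i=1}^n(1-b_iv)$ does not involve $u$, the inner integral over $u$ contributes only the length of the $u$-interval, namely the factor $1-1/b_1$, so that
$$A = \left(1-\frac{1}{b_1}\right)\int_0^{1/b_1}\prod_{i=1}^n(1-b_iv)\,dv.$$
It then remains to evaluate the single integral $\int_0^{1/b_1}\prod_{i=1}^n(1-b_iv)\,dv$ and to match it against $\sum_{i=1}^n(-1)^{i-1}S_{i-1}(b_2,\dots,b_n)/(i(i+1)b_1^i)$.

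First I would peel off the factor $(1-b_1v)$ from the product and expand the remaining product via elementary symmetric functions in $b_2,\dots,b_n$, namely $\prod_{i=2}^n(1-b_iv)=\sum_{k=0}^{n-1}(-1)^kS_k(b_2,\dots,b_n)v^k$. Substituting and interchanging the finite sum with the integral gives
$$\int_0^{1/b_1}\prod_{i=1}^n(1-b_iv)\,dv=\sum_{k=0}^{n-1}(-1)^kS_k(b_2,\dots,b_n)\int_0^{1/b_1}(1-b_1v)v^k\,dv.$$

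The crucial computation is the inner monomial integral. A direct evaluation yields $\int_0^{1/b_1}(v^k-b_1v^{k+1})\,dv=\frac{1}{b_1^{k+1}}\bigl(\frac{1}{k+1}-\frac{1}{k+2}\bigr)=\frac{1}{(k+1)(k+2)b_1^{k+1}}$, where the telescoping identity $\frac{1}{k+1}-\frac{1}{k+2}=\frac{1}{(k+1)(k+2)}$ is precisely what manufactures the denominator $i(i+1)$ in the target expression. Substituting this back and reindexing with $i=k+1$ (so that $k=0,\dots,n-1$ corresponds to $i=1,\dots,n$) turns the sum into $\sum_{i=1}^n(-1)^{i-1}S_{i-1}(b_2,\dots,b_n)/(i(i+1)b_1^i)$; multiplying by the prefactor $1-1/b_1$ recovered in the first step then gives the asserted formula.

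The computation is essentially routine, and there is no genuine analytic obstacle. The only point demanding care is the combinatorial bookkeeping: one must factor out $(1-b_1v)$ first and expand only $\prod_{i=2}^n(1-b_iv)$, rather than expanding the full product $\prod_{i=1}^n(1-b_iv)$, since the latter would produce coefficients $S_k(b_1,\dots,b_n)$ instead of the desired $S_k(b_2,\dots,b_n)$ appearing in the statement. With that choice and the correct index shift, everything falls into place.
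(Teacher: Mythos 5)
Your proof is correct, and it reaches the formula by a slightly different decomposition than the paper's. Both arguments begin the same way (the integrand is independent of $u$, so the $u$-integral contributes the factor $1-1/b_1$), but the paper expands the \emph{full} product $\prod_{i=1}^n(1-b_iv)=\sum_{i=0}^n(-1)^iS_i(b_1,\ldots,b_n)v^i$, integrates term by term to get $\sum_{i=0}^n(-1)^i S_i(b_1,\ldots,b_n)/((i+1)b_1^{i+1})$, and only then converts to symmetric polynomials in $b_2,\ldots,b_n$ via the recursion $S_i(b_1,\ldots,b_n)=b_1S_{i-1}(b_2,\ldots,b_n)+S_i(b_2,\ldots,b_n)$ followed by a reindexing in which the two resulting sums partially cancel to produce the $i(i+1)$ denominators. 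You instead peel off the factor $(1-b_1v)$ \emph{before} expanding, so the coefficients $S_k(b_2,\ldots,b_n)$ appear from the outset and the denominator $(k+1)(k+2)$ falls out of the single integral $\int_0^{1/b_1}(1-b_1v)v^k\,dv=1/((k+1)(k+2)b_1^{k+1})$. Your route is somewhat cleaner, since it avoids the two-sum telescoping; note only that your closing caution --- that expanding the full product ``would produce coefficients $S_k(b_1,\ldots,b_n)$ instead of the desired $S_k(b_2,\ldots,b_n)$'' --- overstates the obstacle, since that is precisely the route the paper takes and then repairs with the recursion. As a byproduct, the paper's intermediate identity $\sum_{i=0}^n(-1)^iS_i(b_1,\ldots,b_n)/((i+1)b_1^{i+1})=\tfrac{b_1}{b_1-1}A$ is recorded as Remark \ref{C} and reused later in the proof of Theorem \ref{Fsigequaln}, so your shortcut would require recovering that statement separately.
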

\begin{proof}
Using the notation $S_i(b_1, \ldots, b_n)$ for the degree $i$ symmetric polynomial in $b_1, \ldots, b_n$,
$$A = \int_0^{1-1/b_1} \int_0^{1/b_1} \sum_{i=0}^n (-1)^{i} S_i(b_1, \ldots, b_n) v^i \ dv \ du,$$ hence
\begin{equation*} \label{firstsum}
A= \left(1-\frac{1}{b_1}\right) \sum_{i=0}^n (-1)^i \dfrac{S_i(b_1, \ldots, b_n)}{(i+1)b_1^{i+1}}.
\end{equation*}
Note that for $i \geq 1$ (and $n \geq 1$), $\displaystyle{\frac{S_i(b_1, \ldots, b_n)}{(i+1)b_1^{i+1}} = \frac{S_{i-1}(b_2, \ldots, b_n)}{(i+1)b_1^i} + \frac{S_i(b_2, \ldots, b_n)}{(i+1)b_1^{i+1}}, }$  where, by convention, $S_i(b_2, \ldots, b_n)=0$ for $i=-1$ or $n$. 

So $$\sum_{i=0}^n (-1)^i \dfrac{S_i (b_1, \ldots, b_n)}{ (i+1)b_1^{i+1}} = \sum_{i=0}^n (-1)^i\frac{S_{i-1}(b_2, \ldots, b_n)}{(i+1)a_1^{i}} + \sum_{i=0}^n (-1)^i\frac{S_i(b_2, \ldots, b_n)}{(i+1)b_1^{i+1}},$$ which equals, after reindexing,

$$\sum_{i=-1}^{n-1} (-1)^{i+1}\frac{S_{i}(b_2, \ldots, b_n)}{(i+2)b_1^{i+1}} + \sum_{i=0}^n (-1)^i\frac{S_i(b_2, \ldots, b_n)}{(i+1)b_1^{i+1}}$$
$$= \sum_{i=0}^{n-1} (-1)^i \frac{S_i(b_2, \ldots, b_n)}{(i+1)(i+2)b_1^{i+1}}= \sum_{i=1}^{n} (-1)^{i-1}\frac{S_{i-1}(b_2, \ldots, b_n)}{i(i+1)b_1^{i}}.$$ 
Thus, \centerline{
\xymatrix{
 & *+[F]{\displaystyle{A = \left(1- \frac{1}{b_1}\right)\sum_{i=1}^{n} (-1)^{i-1}\frac{S_{i-1}(b_2, \ldots, b_n)}{i(i+1)b_1^{i}}.}}} }

\end{proof}

\begin{remark}
\label{C}
If $b_1 = 1$, then $A = 0$; otherwise, (for $b_1 > 1$) the proof shows that $$\sum_{i=0}^n (-1)^i \dfrac{S_i(b_1, \ldots, b_n)}{(i+1)b_1^{i+1}} = \frac{b_1}{b_1-1} \cdot A.$$

\end{remark}

\begin{lemma} With the notation introduced above,
\label{int1}

$$B = 2 \cdot \sum _{i=1}^{n}  (-1)^{i-1} \frac{S_{i-1}(b_2, \ldots, b_n)}{i(i+1)(i+2)b_1^{i+1}}.$$

\end{lemma}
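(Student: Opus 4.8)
The plan is to evaluate $B$ in closed form by integrating term by term, and then to manipulate the resulting finite sum with exactly the same mechanism used in the proof of Lemma~\ref{int}, namely the Pascal-type recursion for elementary symmetric polynomials. Since $\int_0^{1/b_1} u^{i+1}\,du = \frac{1}{(i+2)b_1^{i+2}}$, integrating the summand gives immediately
\[
B = \sum_{i=0}^n (-1)^i \frac{S_i(b_1,\ldots,b_n)}{(i+1)(i+2)\,b_1^{i+2}}.
\]
From here the problem is purely algebraic: rewrite this in terms of the symmetric polynomials in the reduced variable set $b_2,\ldots,b_n$, so that the answer matches the stated form.

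Next I would substitute the identity $S_i(b_1,\ldots,b_n) = b_1\,S_{i-1}(b_2,\ldots,b_n) + S_i(b_2,\ldots,b_n)$ --- the same recursion already exploited in Lemma~\ref{int} --- into this closed form. This splits $B$ into two sums: one carrying a factor $b_1^{-(i+1)}$ coming from the $b_1 S_{i-1}$ term, and one carrying $b_1^{-(i+2)}$ from the $S_i$ term. After reindexing the first sum by $i \mapsto i+1$ and discarding the boundary contributions (using the conventions $S_{-1}=0$ and $S_n(b_2,\ldots,b_n)=0$, exactly as in Lemma~\ref{int}), both sums run over the common index set $0 \le i \le n-1$ and multiply the same quantity $(-1)^i S_i(b_2,\ldots,b_n)/b_1^{i+2}$, the reindexing of the first sum having introduced the sign that turns the combination into a difference.

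The decisive step is then a single partial-fraction identity: collecting the two sums produces the scalar factor
\[
\frac{1}{(i+1)(i+2)} - \frac{1}{(i+2)(i+3)} = \frac{2}{(i+1)(i+2)(i+3)},
\]
so that
\[
B = 2\sum_{i=0}^{n-1} (-1)^i \frac{S_i(b_2,\ldots,b_n)}{(i+1)(i+2)(i+3)\,b_1^{i+2}},
\]
and reindexing by $i \mapsto i-1$ recovers the claimed formula. None of the individual steps is difficult; the only genuine obstacle is the bookkeeping, i.e.\ keeping the index shifts and the vanishing boundary terms straight so that the two sums align before the telescoping identity can be applied. This is the same difficulty met in Lemma~\ref{int}, and the present computation is a direct analogue of it, carrying one extra factor of $1/(i+2)$ throughout.
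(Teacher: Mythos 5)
Your proposal is correct and follows essentially the same route as the paper: integrate term by term to get $B = \sum_{i=0}^n (-1)^i S_i(b_1,\ldots,b_n)/\bigl((i+1)(i+2)b_1^{i+2}\bigr)$, apply the recursion $S_i(b_1,\ldots,b_n) = b_1 S_{i-1}(b_2,\ldots,b_n) + S_i(b_2,\ldots,b_n)$, reindex, and combine via $\tfrac{1}{(i+1)(i+2)} - \tfrac{1}{(i+2)(i+3)} = \tfrac{2}{(i+1)(i+2)(i+3)}$. All index shifts and boundary conventions check out, so nothing further is needed.
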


\begin{proof}
By definition, $\displaystyle{B=\int_{0}^{1/b_1} \sum_{i=0}^n (-1)^iS_i(b_1,..., b_n)\frac{u^{i+1}}{i+1} \ du}$.
Note that for $i \geq 1$ (and $n \geq 1$), 
$\displaystyle{\frac{S_i(b_1, \ldots, b_n)}{(i+1)(i+2)b_1^{i+2}} = \frac{S_{i-1}(b_2, \ldots, b_n)}{(i+1)(i+2)b_1^{i+1}} + \frac{S_i(b_2, \ldots, b_n)}{(i+1)(i+2)b_1^{i+2}}, }$  where, by convention, $S_i(b_2, \ldots, b_n)=0$ for $i=-1$ or $n$. 

Therefore,

$$\sum_{i=0}^n (-1)^i \dfrac{S_i (b_1, \ldots, b_n)}{ (i+1)(i+2)b_1^{i+2}} = \sum_{i=0}^n (-1)^i\frac{S_{i-1}(b_2, \ldots, b_n)}{(i+1)(i+2)b_1^{i+1}} + \sum_{i=0}^n (-1)^i\frac{S_i(b_2, \ldots, b_n)}{(i+1)(i+2)b_1^{i+2}}.$$

This equals, after reindexing, $$\sum_{i=-1}^{n-1} (-1)^{i+1}\frac{S_{i}(b_2, \ldots, b_n)}{(i+2)(i+3)b_1^{i+2}} + \sum_{i=0}^n (-1)^i\frac{S_i(b_2, \ldots, b_n)}{(i+1)(i+2)b_1^{i+2}} = \sum _{i=0}^{n-1}  (-1)^i \frac{2S_{i}(b_2, \ldots, b_n)}{(i+1)(i+2)(i+3)b_1^{i+2}}$$ which gives 
\centerline{
\xymatrix{
 & *+[F]{\displaystyle{B = 2 \cdot \sum _{i=1}^{n}  (-1)^{i-1} \frac{S_{i-1}(b_2, \ldots, b_n)}{i(i+1)(i+2)b_1^{i+1}}.}}}}
\end{proof}

\begin{theorem} \label{Fsigequaln} Let $n \geq 1$ and $\bb =(b_i: i = 1, \dots, n)$ be a positive integer vector such that $b_1 \geq b_2 \geq \ldots \geq b_n$. For an integer $k \geq 1$, the $F$-signature of $\cB(k \cdot \bb, \bb)$, where $k \cdot \bb = (kb_1, \dots, kb_n)$, equals
\begin{equation}\label{formulak}
\frac{1}{k} \left(\left(\frac{2b_1-1}{b_1-1}\right)A+B \right) \ {\rm when} \ k \geq 2  \ {\rm and} \ b_1 \geq 2,  
\end{equation}

\begin{equation}\label{formulab1}
\frac{1}{k}\left(\sum_{i=0}^n (-1)^i \dfrac{\binom {n}{i}}{i+1}  + B \right) \ {\rm when} \ k \geq 2  \ {\rm and} \ b_1=1, 
\end{equation}

and

\begin{equation}\label{formula1}
2(A+B) \ {\rm when} \ k =1, \ {\rm and} \ {\rm in} \ {\rm particular,} \ 2B  \ {\rm if} \ b_1 = 1.
\end{equation}
\end{theorem}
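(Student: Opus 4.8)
The plan is to apply Von Korff's theorem (Fact~\ref{vonkorff}) directly to $\cB(k\cdot\bb, \bb)$, whose primitive vectors, by Remark~\ref{vectors} with $a_i = kb_i$, are $\bolde_1, \bolde_2$ together with $\pmba_i = (-kb_i, 0, \dots, 1, \dots, 0)$ and $\pmbb_i = (0, -b_i, 0, \dots, 1, \dots, 0)$ (the $1$ in slot $i+2$). Writing a point of $\mathbb{R}^{n+2}$ as $(x, y, z_1, \dots, z_n)$, the region $P_\sigma$ is cut out by $0 \le x < 1$, $0 \le y < 1$, and, for each $i$, both $kb_i x \le z_i < 1 + kb_i x$ and $b_i y \le z_i < 1 + b_i y$. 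The first key observation is that each $z_i$ is constrained only by the pair $\pmba_i, \pmbb_i$, so for fixed $(x,y)$ the admissible $z_i$ form independent intervals, the $i$-th of length $L_i := \min(1 + kb_i x, 1 + b_i y) - \max(kb_i x, b_i y) = 1 - b_i|kx - y|$ (or $0$ if this is negative). Hence
$$s(\cB) = \vol(P_\sigma) = \int_0^1\!\!\int_0^1 \prod_{i=1}^n \max\bigl(0,\, 1 - b_i|kx - y|\bigr)\, dy\, dx.$$
Since $b_1 \ge \cdots \ge b_n$, the integrand is nonzero precisely on the band $|kx - y| < 1/b_1$, where every factor is positive; there the product equals $P(w) := \prod_i(1 - b_i w) = \sum_{i=0}^n (-1)^i S_i(b_1, \dots, b_n)\,w^i$ with $w = |kx-y|$, exactly the polynomial underlying $A$ and $B$.

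Next I would change variables $(x,y) \mapsto (x, w)$ with $w = kx - y$, which has Jacobian $1$, so that the integrand depends only on $w$ through $P(w)$. For fixed $w \in (-1/b_1, 1/b_1)$ the constraints $0 \le x \le 1$ and $0 \le kx - w \le 1$ pin down an interval of $x$ whose length I would compute by splitting on the sign of $w$. Here the regimes of the theorem separate: when $k \ge 2$ one checks $(1+w)/k < 1$, so the $x$-length is $1/k$ for $w \ge 0$ and $(1+w)/k$ for $w < 0$; when $k = 1$ the upper endpoint is capped at $1$, giving length $1 - w$ for $w \ge 0$ and $1 + w$ for $w < 0$. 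Folding the $w < 0$ part onto $w > 0$ via $w \mapsto -w$ then yields
$$s(\cB) = \frac{1}{k}\int_0^{1/b_1}(2 - w)\,P(w)\, dw \quad (k \ge 2), \qquad s(\cB) = 2\int_0^{1/b_1}(1 - w)\,P(w)\, dw \quad (k = 1).$$

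It remains to express these one-variable integrals through $A$ and $B$. Using Remark~\ref{C} I would record $\int_0^{1/b_1} P(w)\, dw = \frac{b_1}{b_1 - 1}A$, and an integration by parts (recognizing $B = \int_0^{1/b_1}\bigl(\int_0^u P\bigr)\,du$ from its definition and Lemma~\ref{int1}) gives $\int_0^{1/b_1} w\,P(w)\, dw = \frac{A}{b_1 - 1} - B$. Substituting these produces $\frac{1}{k}\bigl(\frac{2b_1 - 1}{b_1 - 1}A + B\bigr)$ for $k \ge 2,\ b_1 \ge 2$, and $2(A + B)$ for $k = 1,\ b_1 \ge 2$. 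For the degenerate case $b_1 = 1$ (which forces all $b_i = 1$ and $A = 0$), the factor $\frac{2b_1 - 1}{b_1 - 1}A$ is replaced by its finite limit $(2 - 1/b_1)\int_0^{1/b_1}P = \int_0^1 (1 - w)^n\, dw = \sum_{i=0}^n (-1)^i\binom{n}{i}/(i+1)$, yielding \eqref{formulab1}, while $2(A+B)$ collapses to $2B$; both can be confirmed by the direct evaluations $\int_0^1 w(1-w)^n\,dw = 1/((n+1)(n+2))$ and $B = 1/(n+2)$.

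The main obstacle I anticipate is not the algebra linking the integrals to $A$ and $B$, but getting the geometry of the $x$-interval length right uniformly in $w$: the endpoints $\max(0, w/k)$ and $\min(1, (1+w)/k)$ behave differently according to the sign of $w$ and, crucially, according to whether $k \ge 2$ or $k = 1$, and this dichotomy is precisely what produces the two different prefactors ($1/k$ versus $2$). The clean reduction that makes the whole computation tractable is the collapse $L_i = 1 - b_i|kx - y|$, which occurs only because $a_i = kb_i$ forces all the slopes $\lambda_i = a_i/b_i$ to coincide, so that the $(n+2)$-dimensional volume reduces to a single weighted integral in $w = kx - y$.
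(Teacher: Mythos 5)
Your proposal is correct, and while it starts from the same place as the paper --- Fact~\ref{vonkorff} applied to the primitive vectors $\bolde_1, \bolde_2, \pmba_i, \pmbb_i$, giving the box constraints $\max(kb_ix,b_iy)\le z_i\le\min(1+kb_ix,1+b_iy)$ --- it evaluates the resulting volume by a genuinely different decomposition. The paper splits the $(x,y)$-domain into the half-planes $kx\le y$ and $y\le kx$, subdivides each into two or three pieces, computes each piece as an iterated $(n+2)$-fold integral, and matches each piece to $A$ or $B$ by an explicit affine substitution $(x,y)=(u/k,u+v)$; the case analysis ($k\ge2$ versus $k=1$, $b_1\ge2$ versus $b_1=1$) is carried through every piece separately. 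You instead integrate out the $z_i$ at once via the identity $L_i=\max(0,1-b_i|kx-y|)$, observe that the integrand depends on $(x,y)$ only through $w=kx-y$, and collapse the whole computation to the one-variable integrals $\frac1k\int_0^{1/b_1}(2-w)P(w)\,dw$ (for $k\ge2$) and $2\int_0^{1/b_1}(1-w)P(w)\,dw$ (for $k=1$), which you then express in terms of $A$ and $B$ using Remark~\ref{C} together with the integration-by-parts identity $\int_0^{1/b_1}wP(w)\,dw=\frac{A}{b_1-1}-B$ (valid since $B=\int_0^{1/b_1}\bigl(\int_0^uP\bigr)du$). I checked that your fiber-length computation (length $1/k$ for $w\ge0$ and $(1+w)/k$ for $w<0$ when $k\ge2$; lengths $1\mp w$ when $k=1$) and your final substitutions reproduce all three formul{\ae}, including the $b_1=1$ case where $(2-1/b_1)\int_0^{1/b_1}P=\sum_{i=0}^n(-1)^i\binom{n}{i}/(i+1)$ and $B=1/(n+2)$. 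What your route buys is a shorter argument with far less case bookkeeping and a transparent treatment of the degenerate case $b_1=1$ as a limit; what the paper's route buys is that each sub-integral is literally one of the defining integrals $A$, $B$, so no auxiliary identities beyond Remark~\ref{C} are needed.
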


\begin{proof} Generalizing the argument above, we want to find the volume of the figure made of all of the points $\rho$ such that the inequalities below hold:
$$0 \leq \rho \cdot {\bolde}_1 \leq 1; \quad 0 \leq \rho \cdot {\bolde}_2 \leq 1; \quad 0 \leq \rho \cdot \pmba_i \leq 1; \quad 0 \leq \rho \cdot \pmbb_j \leq 1,$$
where $\alpha_1 = (-kb_1, 0, 1, 0, \dots, 0)$ and $\beta_1 = (0, -b_1, 0, 1, 0, \dots, 0)$.
This translates to finding those $\rho = (x,y,z_1,z_2,\dots, z_n)$ satisfying 
$$0 \leq x \leq 1; \quad 0 \leq y \leq 1; \quad kb_ix \leq z_i \leq 1 + kb_ix; \quad b_iy \leq z_i \leq 1+b_iy.$$

In particular, $\max(kb_ix, b_iy) \leq z_i \leq \min(1+kb_ix, 1+b_iy)$. We calculate this volume in two regions, namely when $kx \leq y$ and $kx \geq y$.

Consider the region where $kx \leq y$.  Then $kx \leq y \leq kx + \frac{1}{b_i}$ for each $i$, 
and hence $kx \leq y \leq kx + \frac{1}{b_1}$ since $b_1 \geq b_2 \geq \cdots \geq b_n$.  Each function $z_j$ is bounded below by $b_jy$ and above by $1+kb_jx$.  The volume of this region is obtained by the sum of the two integrals:
$$
\int_0^{\frac{1}{k}-\frac{1}{kb_1}}\!\!\!\int_{kx}^{kx+\frac{1}{b_1}}\!\!\! \int_{b_ny}^{1+kb_nx} \!\!\!\int_{b_{n-1}y}^{1+kb_{n-1}x} \!\!\!\!\!\!\!\cdots \!\int_{b_2y}^{1+kb_2x} \!\!\!\!\!\!\!\!(1+kb_1x-b_1y) \, dz_2 \, \!\!\cdots \, \!dz_n \, dy \, dx= \!\int_0^{\frac{1}{k}-\frac{1}{kb_1}}\!\!\!\int_{kx}^{kx+\frac{1}{b_1}} \prod_{i=1}^n (1+kb_ix-b_iy) \, dy \, dx$$

and

$$\int_{\frac{1}{k}-\frac{1}{kb_1}}^{\frac{1}{k}}\!\int_{kx}^1 \int_{b_ny}^{1+kb_nx}\!\!\! \int_{b_{n-1}y}^{1+kb_{n-1}x} \!\!\!\cdots \int_{b_2y}^{1+kb_2x} \!\!\!(1+kb_1x-b_1y) \, dz_2 \,\!\! \cdots \, \!dz_n \, dy \, dx = \int_{\frac{1}{k}-\frac{1}{kb_1}}^{\frac{1}{k}}\int_{kx}^1  \prod_{i=1}^n (1+kb_ix-b_iy) \, dy \, dx.$$

Assume $b_1 > 1$.  We will address the case $b_1 = 1$ at the end of the proof.

Let $S' = \{ (x,y) : 0 \leq x \leq \frac{1}{k}-\frac{1}{kb_1}, \ kx \leq y \leq kx+1/b_1 \}.$  For the first integral, we will compute 
$$J_{k} = \iint_{S'} \prod _{i=1}^n (1-b_i(y-kx)) \ dy \ dx.$$
Consider the substitution:
$$ x= g(u,v)= u/k, \quad y=h(u,v)=u+v,$$ 
with $S = \{ (u,v) :  0 \leq u \leq 1-1/b_1, \ 0 \leq v \leq 1/b_1 \}.$  Note $S' = f(S),$ where $f=(g,h): S \to S'$.  Therefore 
$$J_k = \iint_S \prod_{i=1}^n (1-b_iv) \ dv \ du = \frac{1}{k} \cdot \int_0^{1-1/b_1} \int_0^{1/b_1} \prod_{i=1}^n (1-b_iv) \ dv \ du = \frac{1}{k} A.$$


Now let us go to the second integral:
$$J'_k= \int_{\frac{1}{k}-\frac{1}{kb_1}}^{\frac{1}{k}}\int_{kx}^1  \prod_{i=1}^n (1-b_i(y-kx)) \, dy \, dx.$$
Let $S' = \{ (x, y) : \frac{1}{k}- \frac{1}{kb_1} \leq x \leq \frac{1}{k}, \ kx \leq y  \leq 1 \}.$  We will make the following substitution:
$$ x = g(u,v)= u/k, \quad y=h(u,v)=u+v,$$ 
with $S = \{ (u,v) :  1-1/b_1\leq u\leq 1, \ 0 \leq v \leq 1-u \}.$  Note $S' = f(S),$ where $f=(g,h): S \to S'$.  After the substitution we get
\begin{equation}\label{J'}
J'_k =\frac{1}{k} \int_S \prod_{i=1}^n (1-b_i v) \ dv \ du = \frac{1}{k}\int_{1-1/b_1}^{1} \int _0^{1-u}  \prod_{i=1}^n (1-b_i v) \ dv \  du.
\end{equation}
This equals
$$\frac{1}{k}\int_{1-1/b_1}^{1}  \sum_{i=0}^n (-1)^i S_i (b_1, \ldots, b_n) \frac{(1-u)^{i+1}}{i+1} \ du,$$ which, after a simple substitution, gives $\displaystyle{J'_k=\frac{1}{k}B}.$


When $y \leq kx$ and $k \geq 2$,  we have $y + \frac{1}{b_1} \leq 2$ and so $y \leq kx \leq y + \frac{1}{b_1}$ automatically gives $y \leq 1$, so there is only one integral to compute:


\begin{equation}\label{kbig}
\int_0^{1}\!\!\!\int_{\frac{y}{k}}^{\frac{y}{k}+\frac{1}{kb_1}}  \prod_{i=1}^n (1+b_iy-kb_ix) \, dx \, dy.
\end{equation}
Set $S' = \{ (x, y) : \frac{y}{k} \leq x \leq \frac{y}{k} + \frac{1}{kb_1}, \ 0 \leq y \leq 1 \}$, and make the substitution $y = h(u,v)= u, x= g(u,v)=\frac{u+v}{k},$
with $S = \{ (u,v) :  \ 0 \leq u \leq 1, 0 \leq v \leq 1/b_1 \}.$  Note $S' = f(S),$ where $f=(g,h): S \to S'$.  We obtain
$$ \frac{1}{k} \cdot \int_0^{1}\!\!\!\int_0^{\frac{1}{b_1}}  \prod_{i=1}^n (1-b_iv) \, dv \, du = \frac{1}{k}\sum_{i=0}^n (-1)^i S_i(b_1, \ldots, b_n) \frac{1}{b_1^{i+1}(i+1)},$$ which equals $$\frac{b_1}{k(b_1-1)} \cdot A,$$ according to Remark~\ref{C}.

When $y \leq kx$ and $k=1$, then there are two integrals to compute:

\begin{equation} \label{k=1p1}
\int_0^{1-\frac{1}{b_1}}\!\!\!\int_{y}^{y+\frac{1}{b_1}}  \prod_{i=1}^n (1+b_iy-b_ix) \, dx \, dy
\end{equation}
and

\begin{equation} \label{k=1p2}
\int_{1-\frac{1}{b_1}}^{1}\!\!\!\int_{y}^{1}  \prod_{i=1}^n (1+b_iy-b_ix) \, dx \, dy.
\end{equation}

The first integral uses $S' = \{ (x, y) : y \leq x \leq y +1 /b_1, \ 0 \leq y \leq 1-1/b_1  \}$, with the substitution $y = h(u,v)= u, x=g(u,v)=u+v,$
with $S = \{ (u,v) :  \ 0 \leq v \leq 1/b_1, 0 \leq u \leq 1-1/b_1  \}.$  Again, $S' = f(S),$ where $f=(g,h): S \to S'$.  We obtain
$$\int_0^{1-\frac{1}{b_1}}\int_{0}^{1/b_1}  \prod_{i=1}^n (1-b_iv) \, dv \, du \  = A.$$

The last integral uses $S' = \{ (x, y) : y \leq x \leq 1, \ 1-1/b_1 \leq y \leq 1 \}$, with the substitution $y = h(u,v)= u, x=g(u,v)=u+v,$
with $S = \{ (u,v) :  \ 0 \leq v \leq 1-u, 1-1/b_1 \leq u \leq 1 \}.$  Again, $S' = f(S),$ where $f=(g,h): S \to S'$.  We obtain
$$\int_{1-\frac{1}{b_1}}^1\int_{0}^{1-u}  \prod_{i=1}^n (1-b_iv) \, dv \, du,$$
which is $k$ times the last double integral in Equation \eqref{J'}; i.e., this expression is equal to $B$.


Therefore, when $k \geq 2$ (and $b_1 > 1$), one has $\displaystyle{\frac{1}{k}A + \frac{1}{k}B+ \frac{b_1}{k(b_1-1)}A}$, which simplifies to  formula \eqref{formulak}, and when $k = 1$ formula \eqref{formula1} is easily obtained.

Now assume that $b_1 = 1$.  Then the integral $J_k = 0$, and in particular $A = 0$, while $J_k'$, with limits of integration  0 and $\frac{1}{k}$ on $x$, still equals $\frac{1}{k}B$.  When $k \geq 2$, the integral in \eqref{kbig} is 
$$ \int_0^{1}\!\!\!\int_{\frac{y}{k}}^{\frac{y}{k}+\frac{1}{k}}  \prod_{i=1}^n (1+y-kx) \, dx \, dy = 
\sum_{i=0}^n (-1)^i \dfrac{S_i(1, \ldots, 1)}{i+1} = \sum_{i=0}^n (-1)^i \dfrac{\binom {n}{i}}{i+1},$$
while the integrals in \eqref{k=1p1} and \eqref{k=1p2} are clearly 0. The formul{\ae} now follow.

\end{proof}
\excise{
\begin{corollary}
The F-signature of $\cB (\bb, \bb)$ equals

$$2(1- \frac{1}{b_1})\sum_{i=1}^{n} (-1)^{i-1}\frac{S_{i-1}(b_2, \ldots, b_n)}{i(i+1)b_1^{i}} + 2 \cdot \sum _{k=1}^{n}  (-1)^{k-1} \frac{S_{k-1}(b_2, \ldots, b_n)}{k(k+1)(k+2)b_1^{k+1}}.$$

\end{corollary}

\begin{proof}
This can be obtained form the earlier result by letting $k=1$ in Theorem~\ref{Fsigequaln}, and then applying Propositions~\ref{int} and \ref{int1}.

The formula obtained can be deduced from Conca's work (see Theorem 3.1 in~\cite{Co}) which gives a formula for the Hilbert-Kunz multiplicity of binomial hypersurfaces. One can note that
$$ \cB(\bb, \bb) = \sfk [x_1, \ldots, x_n, A, B, C]/(AB-x_1^{b_1} \cdots x_n^{b_n} C),$$which is a hypersurface of multiplicity 2 and hence its F-signature and Hilbert-Kunz multiplicity add up to $2$.

\end{proof}

}

\begin{remark} 
\begin{enumerate}
\item
The hypothesis $b_1 \geq b_2 \geq \ldots \geq b_n$ is nonrestrictive, since we can always reindex the indeterminates $x_1, \ldots, x_n$ to obtain this condition for the vector $\bb$.
\item
Not surprisingly, the denominators in these terms arise from polygonal sequences: those in the first term are the triangular numbers, i.e., $n(n+1)/2$, while the denominators in the second term are the $n$-th $n$-gonal numbers minus $n$; i.e., $(n(n-1)(n-2))/2$.
\item 
As an example, the intersection algebra $\cB(3,2)$ has $F$-signature equal to $11/36$.
\item
For $n=2$ and $b_1 \geq b_2 >0$, the $F$-signature of $\cB ((b_1, b_2), (b_1, b_2))$ is

$$2\left(1-\frac{1}{b_1}\right)\left(\frac{1}{2b_1} - \frac{b_2}{6b_1^2}\right) + 4\left(\frac{1}{6b_1^2} - \frac{a_2}{24b_1^3}\right) = \frac{6b_1^2-2b_1-2b_1b_2 +b_2}{6b_1^3}.$$

\end{enumerate}
\end{remark}


\section{The Hilbert-Kunz Multiplicity}


Let $\sfk$ be a field of positive characteristic. The purpose of this section is to present formul{\ae} for the Hilbert-Kunz multiplicity of the intersection algebra of principal monomial ideals in some interesting cases. The Hilbert-Kunz multiplicity is notoriously difficult to compute, even for hypersurfaces in three variables. General formul{\ae} for classes of rings are seldom available. In the normal semigroup ring case, a general approach is already known due to K.-i. Watanabe \cite{W} and subsequently, E.~Kazufumi \cite{Kz}, which reduces the computation of the Hilbert-Kunz multiplicity to that of a relative volume of a certain region. 
In the case of our rings $\mathcal B (\ba, \bb)$, we will present some computations that, besides providing clean formul{\ae}, show that general formul{\ae} for $\hk(B(\ba, \bb))$ in terms of $\ba, \bb$ can be very difficult to obtain. Moreover, in the Appendix, we give an example to show how one can calculate the Hilbert-Kunz multiplicity with the use of {\it Mathematica} when $n=1$ and the values of $\ba = a, \bb = b$ are specified. 

Again, because our objects of study are normal semigroup rings, some of the existing literature addresses our situation.  For example, by \cite[Theorem 2.1]{W}, the Hilbert-Kunz multiplicity of the intersection algebra that we aim to compute is known to be a rational number.  More importantly, the exact value can be obtained through calculating the volume of a particular region:

\begin{fact} \label{K-HKthm} \cite[Theorem 2.2]{Kz} Let $Q$ be a semigroup and $q_1, \dots, q_v \in Q \subset \mathbb Z^N$ elements such that $\sfk[Q]/\fq$ is finite length, where $\fq = (x^{q_1}, \dots, x^{q_v})$.  Let $\mathcal C$ denote the convex rational polyhedral cone spanned by $Q$ in $\mathbb R^N$ and $\mathcal P = \{p \in \mathcal C | p \notin q_j+\mathcal C {\text{ for each }} j\}$.  Then $\e_{HK}(\fq, \sfk[Q]) = \rvol \bar{\mathcal P}$, where $ \bar{\mathcal P}$ is the closure of $\mathcal P$ and $\rvol$ denotes the relative volume, as per \cite[p.~239]{St}.
\end{fact}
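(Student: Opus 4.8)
The plan is to evaluate $\hk(\fq, \sfk[Q])$ straight from the definition as a limit of normalized colengths, to convert each colength into a lattice-point count, and then to pass to the relative volume by an Ehrhart-type dilation argument. Write $p = \chr(\sfk)$ and $N = \rk Q = \dim \sfk[Q]$, so that
$$\hk(\fq, \sfk[Q]) = \lim_{e \to \infty} \frac{\length\big(\sfk[Q]/\fq^{[p^e]}\big)}{p^{eN}}, \qquad \fq^{[p^e]} = (x^{p^e q_1}, \dots, x^{p^e q_v}).$$
Since $\fq^{[p^e]}$ is a monomial ideal of $\sfk[Q]$, the quotient has a $\sfk$-basis given by the monomials $x^m$, $m \in Q$, lying outside $\fq^{[p^e]}$, and $x^m \in \fq^{[p^e]}$ exactly when $m \in p^e q_j + Q$ for some $j$. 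Hence
$$\length\big(\sfk[Q]/\fq^{[p^e]}\big) = \#\{\, m \in Q : m \notin p^e q_j + Q \ \text{for each}\ j \,\}.$$

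Next I would identify this counting region with a dilate of $\mathcal P$. As $\mathcal C$ is a cone we have $p^e \mathcal C = \mathcal C$ and $p^e(q_j + \mathcal C) = p^e q_j + \mathcal C$, so homogeneity gives $p^e \mathcal P = \{\, m \in \mathcal C : m \notin p^e q_j + \mathcal C \ \text{for each}\ j \,\}$. When $Q$ is normal (as is the case for the intersection algebras $\cB(\ba,\bb)$ by Theorem~\ref{properties}), one checks that $Q = \mathcal C \cap \gp(Q)$ and that $(p^e q_j + \mathcal C) \cap \gp(Q) = p^e q_j + Q$; these two identities force the count above to equal \emph{exactly} $\#\big(p^e \mathcal P \cap \gp(Q)\big)$. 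The finite-length hypothesis on $\sfk[Q]/\fq$ guarantees that $\fq$ is primary to the maximal monomial ideal, which is what makes $\mathcal P$, and hence its closure $\bar{\mathcal P}$, a bounded region rather than an unbounded piece of $\mathcal C$.

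The final step is the volume estimate. By the very definition of Stanley's relative volume with respect to the lattice $\gp(Q)$,
$$\rvol(\bar{\mathcal P}) = \lim_{t \to \infty} \frac{\#\big(t\mathcal P \cap \gp(Q)\big)}{t^{N}},$$
and since this limit exists (the dilation count is governed by an Ehrhart quasi-polynomial for the rational polytope $\bar{\mathcal P}$) it may be computed along the subsequence $t = p^e$. Combining the three steps yields $\hk(\fq, \sfk[Q]) = \rvol(\bar{\mathcal P})$, and incidentally shows that the defining limit exists. In the full-dimensional case this unwinds to $\vol(\bar{\mathcal P})/[\mathbb Z^N : \gp(Q)]$, recovering the normalization used in~\cite{W}.

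The step I expect to be the main obstacle is the matching in the second paragraph. For normal $Q$ the two set-theoretic identities make the match exact, but for a general affine semigroup one must instead bound the discrepancies: between $Q$ and $\mathcal C \cap \gp(Q)$ (the ``holes'' of $Q$), between the conditions $m \notin p^e q_j + Q$ and $m \notin p^e q_j + \mathcal C$, and between the dilated lattice count and $p^{eN}\rvol(\bar{\mathcal P})$ (the boundary layer of $\bar{\mathcal P}$). Each of these sets is concentrated near finitely many hyperplanes and hence should contribute only $O(p^{e(N-1)})$ points, but assembling a uniform lower-order bound, after first deducing boundedness of $\mathcal P$ from the finite-length assumption, is where the genuine work lies; the correct normalization of $\rvol$ relative to $\gp(Q)$ is the remaining subtlety.
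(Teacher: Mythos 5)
The paper does not prove this statement at all: it is imported verbatim as a Fact from \cite[Theorem 2.2]{Kz}, so there is no internal proof to compare against. Judged on its own, your argument is the standard proof of this result and its skeleton is sound: the colength of $\fq^{[p^e]}$ is a monomial count, Frobenius powers scale the excluded translated cones by $p^e$, and the dilated lattice count converges to the relative volume by Ehrhart theory. In the normal case your two identities $Q=\mathcal C\cap\gp(Q)$ and $(p^eq_j+\mathcal C)\cap\gp(Q)=p^eq_j+Q$ do make the match exact, and since the paper only ever applies the Fact to $\cB(\ba,\bb)$, which is normal with $\gp(Q)=\mathbb Z^{n+2}$ (Theorem~\ref{properties}, Lemma~\ref{group}), your argument fully covers the use made of it here. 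One normalization caveat: Stanley's relative volume in \cite{St} is taken with respect to the lattice $\mathbb Z^N\cap\operatorname{aff}(\mathcal P)$, whereas the lattice forced by the counting argument is $\gp(Q)$; the statement as quoted is therefore only literally correct when these coincide (as they do in this paper), and otherwise an index factor intervenes --- you correctly sensed this but did not pin it down.

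The genuine gap is the one you flag yourself: for a non-normal affine semigroup you assert that the holes $\bar Q\setminus Q$, and the holes of each translate $p^eq_j+Q$ inside $p^eq_j+\bar Q$, contribute only $O(p^{e(N-1)})$ lattice points, ``being concentrated near finitely many hyperplanes.'' That is true but is itself a structure theorem about affine semigroups, not an observation, and as written the error estimate is not established. The cleaner way to close this, and the one that avoids the combinatorics of holes entirely, is to pass to the normalization: $\sfk[\bar Q]$ with $\bar Q=\mathcal C\cap\gp(Q)$ is a module-finite birational extension of the domain $\sfk[Q]$, and Hilbert--Kunz multiplicity is unchanged under such extensions (the two rings agree at the minimal prime, so $\length(\sfk[Q]/\fq^{[p^e]})-\length(\sfk[\bar Q]/\fq^{[p^e]}\sfk[\bar Q])=O(p^{e(N-1)})$ by the standard additivity argument for $\hk$ on modules of equal rank). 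This reduces the general statement to the normal case, where your computation is exact. You should also actually derive the boundedness of $\bar{\mathcal P}$ from the finite-length hypothesis (finiteness of $Q\setminus\bigcup_j(q_j+Q)$ forces the recession cone of $\mathcal P$ to be trivial) rather than only noting that it is needed; with those two points supplied the proof is complete and agrees with the approach of \cite{W} and \cite{Kz}.
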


Recall that $\B (\ba, \bb) = \sfk [Q]$.  Index the vectors in the Hilbert set $\mathcal H = \{v_1, \ldots, v_h\}$ in a counterclockwise manner, as described in Section 3, with $v_1= (1,0)$ and $v_h =(0, 1)$. As mentioned previously, the generators of $Q$ are $\bu= \bu(v)=(v, \bt(v))$, where $\bt(v) = (\max(a_ir, b_is))_{i=1, \ldots, n}$, for $v=(r,s) \in \mathcal H$. Let $\mathcal G = \{ \bu(v) : v \in \mathcal H\}$. 
As in Section 3, the generators of $\mathcal C$ referenced in Fact \ref{K-HKthm} are the generators of the cone $C$ in Remark \ref{vectors}.
Set $v=(r,s)$ and let $C_v $ denote the complement of the translation of the cone $C$ by $\bu(v)$; that is, the complement of $\bu(v) +C$. In other words, 
$$C_v=\{ (x, y, t_1, \ldots, t_n) : x < r  \ {\rm or} \  y < s \ {\rm or} \ t_i -\max(a_ir, b_i s) < \min( a_i(x-r), b_i (y-s)), i =1, \ldots, n\}.$$

Therefore, regarding the intersection algebra, Fact \ref{K-HKthm} is applied and interpreted as the following:

\begin{proposition}
\label{ehk}

The Hilbert-Kunz multiplicity of $\mathcal B(\ba, \bb)$ equals the relative volume of $C \cap (\cap_{v \in \mathcal H} C_v).$
\end{proposition}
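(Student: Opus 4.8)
The plan is to apply Fact~\ref{K-HKthm} directly to the semigroup $Q = Q(\ba, \bb)$ and the distinguished ideal $\fq = \fm$, and then show that the polyhedral region $\mathcal P$ appearing in that fact coincides with $C \cap \left(\bigcap_{v \in \mathcal H} C_v\right)$. Since $\mathcal B = \sfk[Q]$ and $\mathcal B$ is normal of rank $n+2$ with $\gp(Q) = \mathbb Z^{n+2}$ by Lemma~\ref{group}, the cone $\mathcal C$ in Fact~\ref{K-HKthm} is precisely $C$, and the relevant finite-length quotient is $\mathcal B/\fm$. By Fact~\ref{old}, $Q$ is generated (as a semigroup) by the vectors $\bu(v) = (v, \bt(v))$ for $v \in \mathcal H$, together with the coordinate generators $\bolde_3, \dots, \bolde_{n+2}$; the relevant monomial generators of $\fm$ as an ideal are the $x^{\bu(v)}$ for $v \in \mathcal H$.

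First I would unwind the definition of $\mathcal P$ in Fact~\ref{K-HKthm}. With $q_j$ ranging over the generators $\bu(v)$, $v \in \mathcal H$, a point $p \in C$ lies in $\mathcal P$ exactly when $p \notin \bu(v) + C$ for every $v \in \mathcal H$. The key translation step is to check that $p \notin \bu(v) + C$ is literally the defining condition of $C_v$, i.e.\ that
$$
C_v = \mathbb R^{n+2} \setminus \left(\bu(v) + C\right).
$$
Writing $p = (x, y, t_1, \dots, t_n)$ and $\bu(v) = (r, s, \max(a_1 r, b_1 s), \dots, \max(a_n r, b_n s))$, membership $p \in \bu(v) + C$ means that $p - \bu(v) \in C$, which by the definition of $C$ (Theorem~\ref{cones} / the notation preceding it) says $x - r \ge 0$, $y - s \ge 0$, and $t_i - \max(a_i r, b_i s) \ge \max\!\bigl(a_i(x - r),\, b_i(y - s)\bigr)$ for all $i$. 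Negating this conjunction gives precisely the disjunction defining $C_v$ (with $\max(a_i(x-r), b_i(y-s))$ replaced by $\min$, which I would double-check is the author's intended reading of the complement). Intersecting over all $v \in \mathcal H$ then yields $\mathcal P = C \cap \bigcap_{v \in \mathcal H} C_v$, and Fact~\ref{K-HKthm} gives $\hk(\mathcal B) = \rvol(\overline{\mathcal P})$.

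There are two points requiring care, and I expect one of them to be the main obstacle. The minor point is that Fact~\ref{K-HKthm} is stated for a general ideal $\fq$ generated by chosen elements $q_1, \dots, q_v$, whereas here I want $\fq = \fm$; I must confirm that the semigroup generators $\bu(v)$ do generate $\fm$ up to radical, so that $\mathcal B/\fm$ has finite length, which is immediate since $\fm$ is the homogeneous maximal ideal (Definition~\ref{ring-assump}) and $\mathcal B/\fm = \sfk$. The more substantial obstacle is reconciling the generating set of $Q$ used in Fact~\ref{K-HKthm}, namely all generators $q_j$ of the ideal, with the fact that I am translating the cone $C$ only by the vectors $\bu(v)$ for $v \in \mathcal H$ rather than by every monomial generator of $\fm$ (which also includes $x_1, \dots, x_n$, i.e.\ the $\bolde_{i+2}$). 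The point to verify is that translating $C$ by $\bolde_{i+2}$ removes nothing from $C$ that is not already removed by translating by the $\bu(v)$, equivalently that $\bolde_{i+2} + C \subseteq \bigcup_v (\bu(v) + C)$ intersected appropriately — so that the coordinate generators are redundant for defining $\mathcal P$. This redundancy holds because $\bu((1,0)) = \bolde_1$-type corner and the other Hilbert generators already dominate the pure-$x_i$ directions; I would make this precise by observing that each $\bolde_{i+2}$ lies in $C$ and $\bu(v) + C$ for the extreme $v = v_1, v_h$ already covers the relevant translates. Once this redundancy is established, the identification $\mathcal P = C \cap \bigcap_{v \in \mathcal H} C_v$ is complete and the proposition follows.
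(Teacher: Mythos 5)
Your overall strategy is the same as the paper's (the paper offers essentially no argument beyond ``Fact~\ref{K-HKthm} is applied and interpreted''), and you correctly isolate the two delicate points: the $\min$ versus $\max$ in the displayed description of $C_v$ (which is indeed best read as a typo for the genuine complement of $\bu(v)+C$), and the role of the generators $x_1,\dots,x_n$ of $\fm$. But your resolution of the second point is wrong, and it is a genuine gap. The claimed containment $\bolde_{i+2}+C\subseteq\bigcup_{v\in\mathcal H}\bigl(\bu(v)+C\bigr)$ fails: every $v=(r,s)\in\mathcal H$ has $r\geq 1$ or $s\geq 1$, so every translate $\bu(v)+C$ lies in $\{x\geq 1\}\cup\{y\geq 1\}$, whereas $\bolde_{i+2}+C$ contains points with $0\le x,y<1$ (for $n=1$, $a=b=1$, the point $(0.1,\,0.1,\,1.2)$ lies in $\bolde_3+C$ but in no $\bu(v)+C$). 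Consequently $C\cap\bigcap_{v\in\mathcal H}C_v$, taken literally, contains the entire unbounded prism $\{0\le x<1,\ 0\le y<1,\ t_i\ge\max(a_ix,b_iy)\}$ and has infinite relative volume; the coordinate generators are not redundant but essential. The same oversight undermines your ``minor point'': the ideal generated by the $x^{\bu(v)}$ alone is not $\fm$-primary (for $n=1$, $a=b=1$ one gets $\mathcal B/(xu,xv,xuv)\cong\sfk[x]$), so Fact~\ref{K-HKthm} cannot even be invoked with that list of $q_j$'s.

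The correct reading --- and what the paper actually computes later --- is that $\fq=\fm$ forces $q_1,\dots,q_{n+h}$ to be all $n+h$ minimal generators from Fact~\ref{old}, i.e.\ the $\bu(v)$ for $v\in\mathcal H$ together with $\bolde_3,\dots,\bolde_{n+2}$. The extra translates contribute the conditions $t_i<\max(a_ix,b_iy)+1$; this is exactly the first clause {\tt (z < 2y + 1 || z < 3x + 1)} in the Mathematica code of the Appendix and the source of the bounds $z\le bkx+1$ and $z\le by+1$ appearing in the subdivision used to prove Theorem~\ref{HKformula}. So the fix is not to prove a redundancy but to enlarge the intersection by these $n$ further conditions (equivalently, to read the proposition's family of $C_v$'s as indexed by all monomial generators of $\fm$, not only by $\mathcal H$); with that emendation your translation of $\mathcal P$ into the stated intersection is correct.
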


The first case where we compute the Hilbert-Kunz multiplicity essentially comes from knowing the $F$-signature.  The result below provides the connection between the two invariants.

\begin{proposition}[See Huneke-Leuschke, \cite{HL}]
\label{HunLe}
Let $(R, \fm)$ be an $F$-finite prime characteristic local ring or an $\mathbb{N}$-graded ring over a field, with homogeneous maximal ideal $\fm$. Assume that $R$ is Gorenstein of multiplicity 2.  Then
$$\hk(R) + s(R)=2.$$
\end{proposition}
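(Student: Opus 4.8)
The plan is to read off both invariants from the single module $F^e_* R$ (the Frobenius pushforward, $q = p^e$) and to balance its minimal number of generators against its free rank. Write $F^e_* R \cong R^{a_q} \oplus M_q$ with $M_q$ having no free direct summand, so that, in the standard $F$-finite normalization (suppressing the uniform factor $[k:k^q]$, for $k$ the residue field, which cancels in every limit below), $s(R) = \lim_q a_q/q^d$ and $\hk(R) = \lim_q \length(R/\fm^{[q]})/q^d$, where $d = \dim R$. Two length identities drive the argument. Reducing modulo $\fm$ gives $\mu(F^e_* R) = a_q + \mu(M_q)$, and since $\mu(F^e_* R) = \length(R/\fm^{[q]})$ this reads $\length(R/\fm^{[q]}) = a_q + \mu(M_q)$. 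Next, choosing a minimal reduction $\underline{x} = x_1, \dots, x_d$ of $\fm$ (after replacing $R$ by $R(t)$ if the residue field is finite, which alters neither invariant), one has $\underline{x}\,F^e_* R = F^e_*\big((x_1^q, \dots, x_d^q)\big)$, so that the Cohen--Macaulayness of $R$ yields $\length(F^e_* R/\underline{x}\,F^e_* R) = \length\big(R/(x_1^q, \dots, x_d^q)\big) = q^d\,\length(R/\underline{x}R) = q^d\,\e(\fm, R) = 2q^d$; equivalently $2a_q + \length(M_q/\underline{x}M_q) = 2q^d$.

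Now I would bring in the Gorenstein and multiplicity-$2$ hypotheses. As $R$ is Cohen--Macaulay, $F^e_* R$ is a maximal Cohen--Macaulay $R$-module and hence so is its summand $M_q$; thus $\underline{x}$ is $M_q$-regular and $\length(M_q/\underline{x}M_q) = \e(\underline{x}, M_q) = \e(\fm, R)\,\rk(M_q) = 2\,\rk(M_q)$. The multiplicity bound for a maximal Cohen--Macaulay module, $\mu(M_q) = \length(M_q/\fm M_q) \le \length(M_q/\underline{x}M_q)$, then gives $\mu(M_q) \le 2\,\rk(M_q)$. Writing $\delta_q := \length(M_q/\underline{x}M_q) - \mu(M_q) = \length(\fm M_q/\underline{x}M_q) \ge 0$ and combining the two identities produces the exact relation $\length(R/\fm^{[q]}) + a_q = 2q^d - \delta_q$. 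Dividing by $q^d$ and passing to the limit gives $\hk(R) + s(R) = 2 - \lim_q \delta_q/q^d$; in particular $\hk(R) + s(R) \le 2$, with equality precisely when $\delta_q = o(q^d)$.

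The crux, and the step I expect to be the main obstacle, is to show $\delta_q = o(q^d)$, and this is where the Gorenstein structure and \cite{HL} are essential. A Gorenstein ring of multiplicity $2$ has minimal multiplicity, so $\fm^2 = \underline{x}\fm$; hence $\fm(\fm M_q) = \fm^2 M_q = \underline{x}\fm M_q \subseteq \underline{x}M_q$, so $\fm M_q/\underline{x}M_q$ is a $k$-vector space and $\delta_q = \dim_k(\fm M_q/\underline{x}M_q)$. Moreover $\bar{R} := R/\underline{x}R$ is Artinian Gorenstein of length $\e(\fm, R) = 2$, hence a ring of dual numbers, and $\delta_q$ counts the free $\bar{R}$-summands of the non-free part of $M_q/\underline{x}M_q$. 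The difficulty is that, although $M_q$ has no free $R$-summand, its reduction modulo $\underline{x}$ can acquire free $\bar{R}$-summands; bounding their growth is exactly the Huneke--Leuschke estimate that the defect between the Hilbert--Kunz colength and the free rank is of lower order for Gorenstein rings. Invoking \cite{HL} yields $\lim_q \delta_q/q^d = 0$ and therefore $\hk(R) + s(R) = 2$. As an alternative route for this last step, after completing one may use that such an $R$ is a hypersurface $S/(f)$ with $\operatorname{ord}(f) = 2$, whose Frobenius pushforwards have a $2$-periodic matrix-factorization structure that forces $\delta_q$ to be of lower order.
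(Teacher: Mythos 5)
Your two length identities are correct and the reduction is clean: writing $F^e_* R \cong R^{a_q}\oplus M_q$ and comparing $\length(R/\fm^{[q]})=a_q+\mu(M_q)$ with $2a_q+\length(M_q/\underline{x}M_q)=2q^d$ does give $\hk(R)+s(R)=2-\lim_q\delta_q/q^d$ with $\delta_q=\length(\fm M_q/\underline{x}M_q)\ge 0$, and hence the inequality $\hk(R)+s(R)\le 2$. But the argument stops exactly where the content lies. By your own identity, $\lim_q\delta_q/q^d=2-\hk(R)-s(R)$, so the assertion ``$\delta_q=o(q^d)$'' is not a lemma you can import from \cite{HL} --- it is a restatement of the proposition being proved. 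Nothing in \cite{HL} is phrased in terms of your $M_q$ or $\delta_q$; what the proof of Lemma 12 there actually supplies is the socle formula $s(R)=\hk(I)-\hk((I,u))$ for a Gorenstein ring, where $I$ is a parameter ideal and $u$ represents the socle of $R/I$. To finish along your lines you would have to derive $\delta_q=o(q^d)$ from that formula (or prove it directly via the hypersurface/matrix-factorization route you gesture at, which is also left as a sketch), and that derivation is the entire proof. As written, the crux is named but not established.

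For comparison, the paper's argument takes the socle formula as the black box and finishes in two lines: choose $I$ to be a minimal reduction of $\fm$ (after enlarging the residue field if necessary, which changes neither invariant), so that Cohen--Macaulayness gives $\hk(I)=\e(I)=\e(\fm)=2$; then $\length(R/I)=2$ forces $(I:\fm)=\fm$ (the alternative $(I:\fm)=I$ would make $R$ regular), hence $(I,u)=\fm$ and $\hk((I,u))=\hk(R)$, so $s(R)=2-\hk(R)$. If you want to salvage your approach, the missing step is precisely to show that the number of free $R/\underline{x}R$-summands of $M_q/\underline{x}M_q$ grows like $o(q^d)$, and the only available tool for that is the same Gorenstein duality computation that underlies the socle formula --- so the detour through $\delta_q$ buys the inequality $\hk+s\le 2$ cheaply but not the equality.
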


\begin{proof}
Let $I$ be an ideal generated by a system of parameters. Let $u \in R$ be the socle generator for $R$. According to~\cite{HL}, proof of Lemma 12, $\hk(I) - \hk (I, u) = s(R).$ We can assume that $I$ is generated by a minimal reduction for $\fm$ (since enlarging the residue field does not change the invariants in the statement), and hence $2=\e(R)= \e(I) =\hk(I).$ Therefore, it is enough to show that $(I, u) =\fm$.  Since $\e(R)=\e(I)={\rm length}_R(R/I)$, we have that either $I= (I: \fm)$ or $(I: \fm) =\fm$. The first equality can not happen, because $R$ is not regular, and hence $(I, u) = (I: \fm) =\fm$.

\end{proof}

\begin{corollary} \label{a=b} If $a_i = b_i$ for all $i = 1, \dots, n$, then $\hk(\mathcal B) = 2 - s(\mathcal B).$  In particular:
\begin{enumerate}
\item For $n =1$ and $a=b$,
$$\hk(\mathcal B(a, a)) = 2 - \frac{3a-1}{3a^2}= \frac{1-3a+6a^2}{3a^2};$$

\item In general (recalling that $S_k(a_1, \ldots, a_n)$ denotes the $k$-symmetric polynomial in $a_1, \ldots, a_n$),
$$\hk(\mathcal B(\ba, \ba)) = 2 - 2\left(1-\frac{1}{a_1}\right) \sum_{k=1}^n (-1)^{k+1} \dfrac{S_{k-1}(a_2, \ldots, a_n)}{k(k+1)a_1^k} - 4 \cdot \!\!\sum_{k=1}^n (-1)^{k+1} \dfrac{S_{k-1}(a_2, \ldots, a_n)}{k(k+1)(k+2)a_1^{k+1}}.$$
\end{enumerate}
\end{corollary}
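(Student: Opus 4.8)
The plan is to derive Corollary~\ref{a=b} as a direct consequence of Proposition~\ref{HunLe} combined with the $F$-signature formula from Theorem~\ref{Fsigequaln}. First I would verify that the hypotheses of Proposition~\ref{HunLe} are met: when $a_i = b_i$ for all $i$, Proposition~\ref{q-gorenstein} tells us that $\mathcal B(\ba,\ba)$ is a hypersurface, namely $\sfk[x_1,\ldots,x_n,A,B,C]/(AB - x_1^{a_1}\cdots x_n^{a_n}C)$, hence Gorenstein; and Proposition~\ref{HSa=b} gives that its Hilbert-Samuel multiplicity equals $2$. Since $\mathcal B$ is an $\mathbb N$-graded ring over the field $\sfk$ with homogeneous maximal ideal $\fm$, Proposition~\ref{HunLe} applies and yields the master identity $\hk(\mathcal B) + s(\mathcal B) = 2$, i.e.\ $\hk(\mathcal B) = 2 - s(\mathcal B)$.

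The remaining work is purely computational substitution. For part (1), I set $n=1$ and use the $n=1$ $F$-signature formula in the case $a=b$, which gives $s(\mathcal B(a,a)) = \frac{3a-1}{3a^2}$; subtracting from $2$ and combining over the common denominator $3a^2$ produces $\frac{6a^2 - 3a + 1}{3a^2}$, matching the stated expression. For part (2), I invoke the case $k=1$ of Theorem~\ref{Fsigequaln}, namely that the $F$-signature of $\mathcal B(\ba,\ba) = \mathcal B(1\cdot\ba, \ba)$ equals $2(A+B)$, and then substitute the closed forms for $A$ and $B$ supplied by Lemma~\ref{int} and Lemma~\ref{int1}. Writing $b_i = a_i$ throughout (the roles of $\ba$ and $\bb$ coincide here), Lemma~\ref{int} gives
$$A = \left(1-\frac{1}{a_1}\right)\sum_{k=1}^n (-1)^{k-1}\frac{S_{k-1}(a_2,\ldots,a_n)}{k(k+1)a_1^k},$$
and Lemma~\ref{int1} gives
$$B = 2\sum_{k=1}^n (-1)^{k-1}\frac{S_{k-1}(a_2,\ldots,a_n)}{k(k+1)(k+2)a_1^{k+1}}.$$
Then $\hk(\mathcal B(\ba,\ba)) = 2 - 2A - 2B$, and inserting these two sums (noting $(-1)^{k-1}=(-1)^{k+1}$) yields exactly the displayed formula in part (2).

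The only genuine subtlety, and the step I would be most careful about, is confirming that Proposition~\ref{HunLe} is legitimately applicable in the graded-over-a-field setting rather than merely the local setting; the proposition is stated for both, so I would simply note that $\mathcal B$ is $\mathbb N$-graded with the unique homogeneous maximal ideal $\fm$ described in Definition~\ref{ring-assump}, placing us squarely in the hypotheses. A secondary point of care is the case $b_1 = 1$ in Theorem~\ref{Fsigequaln}, where $A=0$ and the formula reduces to $2B$; the part (2) expression automatically specializes correctly since the $A$-contribution vanishes when $a_1 = 1$, so no separate argument is needed. Beyond these bookkeeping checks, everything reduces to the routine algebra of substituting known closed forms, so I would not belabor the arithmetic in the write-up.
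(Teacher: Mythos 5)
Your proposal is correct and follows essentially the same route as the paper: both derive $\hk(\mathcal B)+s(\mathcal B)=2$ from Proposition~\ref{HunLe} applied to the multiplicity-$2$ hypersurface of Proposition~\ref{q-gorenstein}, then substitute the $F$-signature formul{\ae} from Theorem~\ref{Fsigequaln} and Lemmas~\ref{int} and~\ref{int1}. The only detail the paper adds that you omit is the remark that one may enlarge $\sfk$ to its algebraic closure so that $\mathcal B$ is $F$-finite, since Section~4 does not assume $F$-finiteness of the field.
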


\begin{proof}The Hilbert-Kunz multiplicity does not change if we enlarge the field $\sfk$ to its algebraic closure. So we can assume that $\sfk$ is $F$-finite and hence the intersection algebra is also $F$-finite.
When $a_i=b_i$ for all $i = 1, \dots, n$, the ring $\mathcal B = \mathcal B(\ba, \ba)$ is a hypersurface, according to Proposition~\ref{q-gorenstein}, of multiplicity $2$. By Proposition~\ref{HunLe}, it is known that $\hk(\mathcal B) + s(\mathcal B) = 2$. Since we computed the $F$-signature in this case, the result follows.  
\end{proof}

The second case that we consider is $\cB(a, b) = \cB(kb, b)$, where $k$ and $b$ are both positive integers and $a = kb$.  Before calculating the Hilbert-Kunz multiplicity, we provide an explicit description of the intersection algebra $\cB(kb, b)$.


\begin{proposition}
\label{kb} 
For $n=1$, if $a =k b$ for some $k \in \mathbb N_+$, $b \in \mathbb{Z} \setminus \{0\}$, then
$$\cB(kb, b) \simeq \dfrac{\sfk[x_0, \ldots, x_{k+2}]}{I_2(M_{k,b})},$$ where
$$M_{k,b} = \left(\begin{array}{ccccc}
x_0^b & x_1 & x_3& \cdots & x_{k+1} \\
x_2 & x_3 & x_4 &\cdots & x_{k+2}
\end{array}
\right)$$

\end{proposition}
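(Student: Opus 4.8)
The plan is to read off a minimal generating set of $\cB(kb,b)=\sfk[Q]$ from the fan, present $\cB$ on those generators, and then identify the presentation ideal with $I_2(M_{k,b})$.

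First I would compute the Hilbert set. With $n=1$ and $\lambda_1=a/b=k$, the fan has two cones in the $(r,s)$-plane: $C_0=\cone((0,1),(1,k))$, which is unimodular and hence contributes $\mathcal H_0=\{(0,1),(1,k)\}$, and $C_1=\cone((1,0),(1,k))$, whose Hilbert basis is $\mathcal H_1=\{(1,0),(1,1),\dots,(1,k)\}$. Thus $\mathcal H=\{(0,1),(1,0),(1,1),\dots,(1,k)\}$ has $k+2$ elements, and by Fact~\ref{old} the algebra $\cB$ is generated over $\sfk$ by $x$ together with the monomials $x^{\max(kbr,\,bs)}u^{r}v^{s}$ for $(r,s)\in\mathcal H$; explicitly these are
$$x,\qquad x^{b}v,\qquad x^{kb}u,\qquad x^{kb}uv,\ \dots,\ x^{kb}uv^{k},$$
which are $k+3$ monomials, matching the variables $x_0,\dots,x_{k+2}$.

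Next I would define the surjection $\varphi\colon \sfk[x_0,\dots,x_{k+2}]\to\cB$ by
$$x_0\mapsto x,\quad x_1\mapsto x^{kb}u,\quad x_2\mapsto x^{b}v,\quad x_{2+j}\mapsto x^{kb}uv^{j}\ \ (1\le j\le k).$$
The decisive observation is that $\varphi$ sends every column of $M_{k,b}$ to a monomial multiple of the single vector $(1,v)^{T}$: column $1$ goes to $x^{b}(1,v)^{T}$, column $2$ to $x^{kb}u\,(1,v)^{T}$, and column $j$ (for $3\le j\le k+1$) to $x^{kb}uv^{j-2}(1,v)^{T}$. Hence $\varphi(M_{k,b})$ has rank one over the fraction field, so each of its $2\times2$ minors vanishes and $I_2(M_{k,b})\subseteq\ker\varphi$. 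As a sanity check, for $k=1$ the single minor is $x_0^{b}x_3-x_1x_2$, recovering the hypersurface presentation of Proposition~\ref{q-gorenstein}.

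It remains to upgrade this containment to equality. Since $\cB$ is a three-dimensional domain (Theorem~\ref{properties}), $\ker\varphi$ is prime of height $(k+3)-3=k$, while the classical codimension bound for determinantal ideals gives $\Ht I_2(M_{k,b})\le k$. Therefore it suffices to prove that $I_2(M_{k,b})$ is prime of height exactly $k$: a prime of height $k$ contained in the prime $\ker\varphi$ of height $k$ must coincide with it. This is where the real work lies, and I expect it to be the main obstacle. When $b=1$ the matrix $M_{k,1}$ is a one-generic matrix of linear forms — the rank-one factorization above exhibits the absence of generalized zeros — so $I_2(M_{k,1})$ is prime and Cohen--Macaulay of codimension $k$ by Eisenbud's theory, and the argument closes immediately. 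The non-linear corner entry $x_0^{b}$ with $b>1$ blocks a direct appeal to that theory, and moreover makes the leading terms $x_0^{b}x_{j}$ of the diagonal minors non-squarefree, so one cannot simply read off radicality from a squarefree initial ideal. To overcome this I would argue directly that the minor relations furnish a straightening law: using the binomials $x_0^{b}x_{j+1}=x_2x_j$, $x_1x_{j+1}=x_3x_j$, and $x_ix_{j+1}=x_{i+1}x_j$ to reduce the index spread, every monomial rewrites modulo $I_2(M_{k,b})$ to a normal form, and I would show these normal forms map bijectively onto the monomial $\sfk$-basis of $\cB=\sfk[Q]$ — i.e. verify that the minors form a Gr\"obner basis of $\ker\varphi$ via Buchberger's criterion — which forces $\ker\varphi=I_2(M_{k,b})$ and yields the asserted isomorphism.
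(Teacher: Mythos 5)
Your setup is correct and agrees with the paper's: the Hilbert set is $\{(0,1),(1,0),(1,1),\dots,(1,k)\}$, the $k+3$ generators $x,\,x^{b}v,\,x^{kb}u,\,x^{kb}uv,\dots,x^{kb}uv^{k}$ are the right ones, the rank-one factorization of $\varphi(M_{k,b})$ gives $I_2(M_{k,b})\subseteq\ker\varphi$, and the height bookkeeping ($\Ht\ker\varphi=(k+3)-3=k$ versus the Eagon--Northcott bound $\Ht I_2(M_{k,b})\le k$) correctly reduces the problem to showing that $I_2(M_{k,b})$ is prime of height $k$, or equivalently to a normal-form count. The gap is that this last step --- which is the entire content of the proposition --- is never carried out for $b>1$. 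You rightly note that one-genericity only handles $b=1$ and that the non-squarefree leading terms $x_0^{b}x_j$ block an easy radicality argument, but you then substitute a declaration that you ``would'' verify Buchberger's criterion and match normal forms against the monomial basis of $\sfk[Q]$ for the verification itself. As written, nothing excludes the possibility that $\ker\varphi$ strictly contains $I_2(M_{k,b})$ (i.e.\ that $I_2(M_{k,b})$ fails to be prime, or acquires an embedded component when $b>1$), so the proof is incomplete at its decisive point.

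For comparison, the paper does not prove this step from first principles either: it invokes Theorem 3.4 of \cite{CDE}, which provides the presentation ideal of such intersection/fan algebras as generated by exactly the $\binom{k+1}{2}$ exchange relations $x_0^{b}x_{j+1}-x_2x_j$, $x_1x_{j+1}-x_3x_j$, $x_ix_{j+1}-x_{i+1}x_j$, and then observes that these are precisely the $2\times 2$ minors of $M_{k,b}$. So the paper's ``real work'' is outsourced to a cited structural result, whereas yours is outsourced to an unexecuted Gr\"obner-basis computation. Your plan is viable --- for a toric ideal one can fix a term order, check that the $S$-pairs of the minors reduce to zero, and verify that the standard monomials biject with $Q$, which forces $\ker\varphi=I_2(M_{k,b})$ --- but to count as a proof it must actually be done (or replaced by a citation such as \cite{CDE}); until then the argument establishes only one of the two required inclusions.
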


\begin{proof}
This is an immediate application of Theorem 3.4 in~\cite{CDE}.  The Hilbert set is 
$$\mathcal H = \{(1,0), (0,1), (1, k), (1, k-1), \dots, (1, 2), (1, 1)\},$$  
and the set $\mathcal G$ determined by $\mathcal H$, as per Definition \ref{Hilbertsets}, is shown below, where we also include the exponent vector corresponding to $x$:
$$\mathcal G = \{(0,0,1), (1,0, kb), (0,1,b), (1, k, kb), (1, k-1, kb), \dots, (1, 2, kb), (1, 1, kb)\}.$$
The generators of the presentation ideal can be listed since the Hilbert set of the intersection algebra is known. The Hilbert number is $k+2$, so there will be $\binom{k+1}{2}$ relations among the generators $x, x^{kb}u, x^bv, x^{kb}uv^k, \ldots, x^{kb}uv$, which correspond to the vectors in $\mathcal G$ above, over the field $\sfk$. These are
$$ x^{kb}uv^{i} \cdot x^{kb}uv^{j} = (x^{kb} uv^{(i+j)/2})^2,$$ if $i, j$ are nonconsecutive and of the same parity, and
$$ x^{kb}uv^{i} \cdot x^{kb}uv^{j}= x^{kb} uv^{(i+j-1)/2} \cdot x^{kb} uv^{(i+j+1)/2},$$ if $i, j$ are nonconsecutive and of different parity. We also have
$$x^b \cdot x^{kb}uv^{i} = x^{kb}uv^{i-1}\cdot x^bv, \quad {\text{for }} i=2, \ldots, n.$$

If we denote $x_0 = x, x_1=x^{kb}u, x_2=x^bv, x_3 = x^{kb}uv, \ldots, x_{k+2}=x^{kb}uv^k$, then it is a simple exercise to see that the ideal generated by the relations above equals $I_2(M_{k,b})$.
\end{proof}

\begin{corollary} (See Watanabe \& Yoshida, \cite[Example 3.5]{WY1}.) \label{WYscroll} Let $a$ be a positive integer. The intersection algebra $\mathcal B (a, 1)$ is isomorphic to the rational normal scroll 
$$S = \sfk[T, xT, xyT, yT, x^{-1}yT, x^{-2}yT, \dots, x^{-(a-1)}yT].$$ 
\end{corollary}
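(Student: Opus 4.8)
The plan is to reduce to the determinantal presentation supplied by Proposition~\ref{kb} and then recognize it as the coordinate ring of the displayed scroll. Setting $k=a$ and $b=1$ in Proposition~\ref{kb} gives $\cB(a,1)\simeq \sfk[x_0,\ldots,x_{a+2}]/I_2(M_{a,1})$, where the rows of $M_{a,1}$ are $x_0,x_1,x_3,x_4,\ldots,x_{a+1}$ and $x_2,x_3,x_4,\ldots,x_{a+2}$; under that isomorphism the variables are the monomials $x_0=x$, $x_1=x^a u$, $x_2=xv$, and $x_{2+j}=x^a uv^{\,j}$ for $j=1,\ldots,a$. Thus it suffices to produce a $\sfk$-algebra isomorphism between $S$ and $\sfk[x_0,\ldots,x_{a+2}]/I_2(M_{a,1})$, equivalently (since both are affine semigroup rings) a monoid isomorphism between $Q=Q(a,1)$ and the exponent monoid $Q'$ of $S\subseteq\sfk[x,x^{-1},y,T]$.

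First I would record the exponent vectors, writing each monomial of $\cB(a,1)$ as $(\text{exp }u,\text{exp }v,\text{exp }x)$ and each generator of $S$ as $(\text{exp }x,\text{exp }y,\text{exp }T)$. The generators of $Q$ are $g_0=(0,0,1)$, $g_1=(1,0,a)$, $g_2=(0,1,1)$ and $g_{2+j}=(1,j,a)$; these satisfy $g_3=g_1+g_2-g_0$ and $g_{2+j}=g_1+j(g_2-g_0)$, so that $g_1,g_3,g_4,\ldots,g_{a+2}$ are collinear with common difference $g_2-g_0=(0,1,0)$. The generators of $Q'$ are the exponent vectors of $T,xT,xyT,yT,x^{-1}yT,\ldots,x^{-(a-1)}yT$, and the collinear family there is $xyT,yT,x^{-1}yT,\ldots,x^{-(a-1)}yT$, of common difference $(-1,0,0)$. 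Matching these two arithmetic progressions, I propose the correspondence $x_0=x\mapsto xT$, $x_1=x^a u\mapsto xyT$, $x_2=xv\mapsto T$, $x_3=x^a uv\mapsto yT$, and in general $x_{2+j}=x^a uv^{\,j}\mapsto x^{-(j-1)}yT$ for $j=1,\ldots,a$; note that the progression runs in \emph{decreasing} powers of $x$ on the scroll side, which is the one point where the indexing must be handled with care.

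To prove this correspondence is an isomorphism I would define the $\mathbb Z$-linear map $\Phi\colon\mathbb Z^3\to\mathbb Z^3$ sending $g_0,g_1,g_2$ to the exponent vectors $w_0=(1,0,1)$, $w_1=(1,1,1)$, $w_2=(0,0,1)$ of $xT,xyT,T$. A direct $3\times 3$ determinant check shows that both $\{g_0,g_1,g_2\}$ and $\{w_0,w_1,w_2\}$ are $\mathbb Z$-bases of $\mathbb Z^3$ (each of determinant $\pm1$), so $\Phi\in GL_3(\mathbb Z)$; this uses $\gp(Q)=\mathbb Z^3$ from Lemma~\ref{group}. Because $\Phi$ is linear and the remaining generators are the prescribed combinations $g_{2+j}=g_1+j(g_2-g_0)$, applying $\Phi$ sends each $g_{2+j}$ to $w_1+j(w_2-w_0)$, which I will verify equals the exponent vector $(1-j,1,1)$ of $x^{-(j-1)}yT$. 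Hence $\Phi$ carries the generating set of $Q$ bijectively onto that of $Q'$, so $\Phi$ restricts to a monoid isomorphism $Q\xrightarrow{\ \sim\ }Q'$ and induces the desired $\sfk$-algebra isomorphism $\cB(a,1)\cong S$. Alternatively, one can check directly that every $2\times 2$ minor of $M_{a,1}$ maps to $0$ under the monomial substitution, obtaining a surjection of three-dimensional domains whose source has the prime ideal $I_2(M_{a,1})$; a height count then forces the kernel to be exactly $I_2(M_{a,1})$, recovering Watanabe--Yoshida's Example~3.5 in~\cite{WY1}.

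The computations themselves are routine; the only genuine obstacle is bookkeeping. Getting the generator matching right---in particular seeing that the collinear family of scroll generators corresponds to $x_1,x_3,x_4,\ldots,x_{a+2}$ (and not to $x_3,\ldots,x_{a+2}$ alone), and that it must be read off in decreasing powers of $x$---is the crux. Once the correspondence is pinned down, the unimodularity of $\Phi$ makes the identification of the two semigroups immediate, bypassing any Cohen--Macaulayness or explicit primality argument for $I_2(M_{a,1})$.
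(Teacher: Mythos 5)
Your proposal is correct, and it is essentially a fully worked-out version of what the paper merely asserts. The paper's proof invokes Proposition~\ref{kb} (with $k=a$, $b=1$) to get the determinantal presentation $\sfk[x_0,\dots,x_{a+2}]/I_2(M)$ and then simply cites the fact that this is ``known'' to present the rational normal scroll under a stated monomial substitution; no verification is given. You instead construct an explicit unimodular map $\Phi\in GL_3(\mathbb Z)$ sending the exponent semigroup $Q(a,1)$ onto the exponent semigroup of $S$, checking that $\{g_0,g_1,g_2\}$ and $\{w_0,w_1,w_2\}$ are both $\mathbb Z$-bases and that the arithmetic progression $g_{2+j}=g_1+j(g_2-g_0)$ is carried to the progression of exponent vectors $(1-j,1,1)$ of $x^{-(j-1)}yT$. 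This bypasses the determinantal presentation entirely (so your opening reduction to Proposition~\ref{kb} is actually dispensable in your main line of argument) and also avoids any appeal to primality or a height count, which you correctly relegate to an optional alternative. One further point in your favor: your generator matching $x_0\mapsto xT$, $x_1\mapsto xyT$, $x_2\mapsto T$, $x_{2+j}\mapsto x^{-(j-1)}yT$ is the one that actually makes every column of $M_{a,1}$ have common ratio $x$, hence kills all $2\times 2$ minors; the substitution printed in the paper ($x_1=T$, $x_2=xyT$) has these two roles interchanged and does not annihilate the minors of $M$ as displayed, so your more careful bookkeeping quietly corrects a typo in the source. The trade-off is only length: the paper's citation is one line, while your argument is self-contained and checkable.
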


\begin{proof}  According to Theorem 3.2.1 in \cite{M-thesis} or Proposition~\ref{kb}, for $a \geq 1$, $\mathcal B (a,1) \simeq \sfk [x_1, \ldots, x_{a+3}]/I_2(M)$, where $I_2(M)$ represents the ideal generated by the $2 \times 2$ minors of the matrix
\[
M= \left(
\begin{array}{ccccc}
x_0 & x_1 & x_3& \cdots & x_{a+1} \\
x_2 & x_3 & x_4 &\cdots & x_{a+2}
\end{array}
\right).
\]
It is known that this is a presentation of the rational normal scroll under $x_0=xT, x_1=T, x_2=xyT, x_3=yT, x_4=x^{-1}yT, \ldots, x_{a+3}=x^{-(a-1)}yT.$
\end{proof}

We are now ready to calculate the Hilbert-Kunz multiplicity of $\mathcal B (kb, b)$.

\begin{theorem} \label{HKformula} For $n=1$, if $a =k b$ for some $k \in \mathbb N_+$, $b \in \mathbb{Z} \setminus \{0\}$, then $\hk(\fm, \mathcal B (kb, b))$ is 
\begin{equation} \label{HKeqn}
\frac{(k+1)-6kb+3k(k+3)b^2}{6kb^2} = \frac{a+b-6ab+3a^2b+9ab^2}{6ab^2}.
\end{equation}
\end{theorem}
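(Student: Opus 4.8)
The plan is to evaluate the relative volume furnished by Proposition~\ref{ehk}. Since $\fm$ is minimally generated by $x = x^{\bolde_3}$ together with the monomials $x^{\bu(v)}$, $v \in \mathcal H$ (Fact~\ref{old}), Fact~\ref{K-HKthm} gives $\hk(\fm,\cB(kb,b)) = \rvol(\mathcal P)$ with $\mathcal P = C \setminus \big((\bolde_3 + C) \cup \bigcup_{v\in\mathcal H}(\bu(v)+C)\big)$; the extra translate $\bolde_3 + C$ is precisely what renders $\mathcal P$ bounded in the $t$-direction. Because $\gp(Q) = \mathbb Z^3$ (Lemma~\ref{group}), this relative volume is the ordinary Euclidean volume. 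I would feed in the explicit data of Proposition~\ref{kb}, namely $\mathcal H = \{(1,0),(0,1),(1,k),(1,k-1),\dots,(1,1)\}$, and record that every generator $\bu(1,j) = (1,j,kb)$ (for $0 \le j \le k$) and $\bu(0,1) = (0,1,b)$ lies on the boundary of $C$, on the faces $t = kbx$ and $t = by$ respectively. Writing the base of $\mathcal P$ in the $(x,y)$-plane and integrating out $t$, the volume becomes $\iint H(x,y)\,dx\,dy$, where $H$ is the height of $\mathcal P$ over $(x,y)$: the minimum of $1$ (from $\bolde_3 + C$) and the quantities $\bt(v) + \max\big(kb(x-r_v),\,b(y-s_v)\big) - \max(kbx,by)$ over those $v$ with $x \ge r_v$, $y \ge s_v$.

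The next step is to decompose the base along $x = 1$ and along the line $L\colon y = kx$, across which $\max(kbx,by)$ switches branches. Two structural facts drive everything. First, for $x < 1$ only the caps $\bolde_3$ and $\bu(0,1)$ can be active, so $H = 1$ for $y < \max(1,kx)$ and, in the wedge above $L$ where $\bu(0,1)$ bites, $H = \min\big(1,\,b(1+kx-y)\big)$ truncated below at $0$; I would integrate this slab-by-slab in $x$, the threshold $kx = 1/b$ producing a quadratic piece. Second, for $x \ge 1$ the point lies below $L$ whenever $y < k$, and there a short computation gives that the cap $\bu(1,j)$ contributes height $b\max\!\big(0,\,y - j - k(x-1)\big)$, so the binding (smallest) one is $j = \lfloor y\rfloor$; moreover $\bu(1,k)$ forces $H = 0$ once $y \ge k$ and, above $L$, forces $H \equiv 0$ for all $x \ge 1$. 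Thus the entire $x \ge 1$ contribution lives over $\{x \ge 1,\ 0 \le y < k\}$ and depends on $y$ only through $\{y\} = y - \lfloor y\rfloor$.

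Finally I would carry out the two integrations. The region $x < 1$, split at $x = 1/(kb)$, yields $1 + \tfrac{k}{2} - \tfrac{1}{2b} + \tfrac{1}{6kb^2}$. For $x \ge 1$, the substitution $w = \{y\} - k(x-1)$ turns the inner integral into $\tfrac1k\int_0^{\{y\}}\min(1,bw)\,dw$, and periodicity in $y$ collapses $\int_0^k$ to $k\int_0^1$, giving $\int_0^1\big(\int_0^t \min(1,bw)\,dw\big)\,dt = \tfrac{1}{6b^2} + \tfrac12 - \tfrac{1}{2b}$. Adding the two contributions produces $\tfrac32 + \tfrac k2 - \tfrac1b + \tfrac{1}{6kb^2} + \tfrac{1}{6b^2}$, which rearranges to $\frac{(k+1)-6kb+3k(k+3)b^2}{6kb^2}$, as claimed (and specializes correctly to Corollary~\ref{a=b}(1) when $k=1$). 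The main obstacle is the bookkeeping of the decomposition: correctly identifying which of $\bu(1,0),\dots,\bu(1,k)$ is binding over each sub-rectangle, verifying the vanishing of $H$ above $L$ for $x \ge 1$, and tracking the $\min(1,\cdot)$ truncations that generate the $1/b$ thresholds. Once the periodicity in $y$ is recognized, the $(k+1)$-fold family of caps telescopes into a single period integral, and the remaining work is elementary.
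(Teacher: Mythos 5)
Your argument is correct: I checked the identification of the binding translates in each region (for $x<1$ only ${\bolde}_3$ and $\bu(0,1)$ matter; for $x\ge 1$ and $y<k$ the binding cap is $\bu(1,\lfloor y\rfloor)$, while $\bu(1,k)$ forces $H=0$ once $y\ge k$ or $y>kx$), and the two partial volumes $1+\tfrac k2-\tfrac1{2b}+\tfrac1{6kb^2}$ and $\tfrac12-\tfrac1b+\tfrac1{6b^2}+\tfrac k2$'s complement piece do sum to the stated formula. The route is, however, organized quite differently from the paper's. The paper also starts from Fact~\ref{K-HKthm}/Proposition~\ref{ehk}, but it partitions the three-dimensional polytope itself into $k+5$ explicit sub-polytopes (one per unit strip $i\le y<i+1$ coming from the family $\bu(1,i)$, plus a few pieces near the origin), records each volume without showing the integrations, and sums; it also excludes the boundary cases, deferring $k=1$ to Corollary~\ref{a=b} and $b=1$ to the Watanabe--Yoshida rational normal scroll computation. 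You instead integrate out the $t$-coordinate first, reduce to a height function over the $(x,y)$-plane, and---crucially---observe that over $x\ge1$ the height depends on $y$ only through its fractional part, so the $(k+1)$-fold family of caps collapses to a single period integral. This buys a uniform treatment of all $k\ge1$, $b\ge1$ in one computation (your formula specializes correctly at $k=1$ and $b=1$) and replaces the list of $k+5$ sub-volumes with two short integrals. Two small points of care: your expression $H=\min\bigl(1,\,b(1+kx-y)\bigr)$ for $x<1$ is valid only on $y\ge\max(1,kx)$, since $\bu(0,1)$ is inactive for $y<1$ (your phrasing ``where $\bu(0,1)$ bites'' and your final numbers show you used the correct region, but say so explicitly); and the hypothesis should really be $b\in\mathbb N_+$, as both your integrals and the paper's presuppose $b\ge1$.
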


\begin{proof} Since Corollary \ref{a=b} takes care of the case $k = 1$ and the case $b = 1$ is handled in \cite[Example 3.5]{WY1} via Corollary \ref{WYscroll}, we concentrate on the situation when $b, k$ are both greater than $1$.  

Using Fact \ref{K-HKthm}, the volume of the three dimensional region given by the inequalities below needs to be computed:

I. $x < 1$ or $z \leq by + kb$; \qquad II. $y < 1$ or $z \leq bkx + b$; \qquad III. $x < 1$ or $y < k$;  \qquad and

IV. $x < 1$ or $y < ki$ (for $1 \leq i \leq k$) or $z \leq by + b(k-i)$.

However, these inequalities are not mutually exclusive conditions.  A partition of the polytope above is given by the subdivision below:

\begin{enumerate}
\item[1.] $\{x < 1, \quad z \leq bkx + 1 \}$
\item[2.] $\{x < 1, \quad y > 1,  \quad z > bkx + 1, \quad z \leq by + 1, \quad z \leq bkx + b \}$
\item[3.] $\{x < 1, \quad y < 1, \quad z > bkx + 1, \quad z \leq by + 1\}$
\item[4.] $\{x > 1, \quad y < 1, \quad z \leq by + 1 \}$
\item[5.] $\{x > 1, \quad 1 \leq y < k, \quad z \leq by + 1, \quad z \leq bkx + b \}$
\item[6.] $\{x > 1, \quad 0 \leq y < 1, \quad z > by + 1, \quad z \leq bkx + 1, \quad z \leq by + b(k-0) \}$
\item[7.] $\{x > 1, \quad 1 \leq y < 2, \quad z > by + 1, \quad z \leq bkx + 1, \quad z \leq by + b(k-1) \}$
\item[8.] $\{x > 1, \quad 2 \leq y < 3, \quad z > by + 1, \quad z \leq bkx + 1, \quad z \leq by + b(k-2) \}$
\item[$\vdots$] \hskip2in $\vdots$
\item[$k+$5.] $\{x > 1, \quad (k-1) \leq y < k, \quad z > by + 1, \quad z \leq bkx + 1, \quad z \leq by + b(k-(k-1)) \}$.
\end{enumerate}

We calculate the volumes of each of the regions above, respectively, recalling that these formul{\ae} hold for $b, k > 1$ only. 
$$\frac{1+bk}{2b}, \hskip.08in \frac{(2k-1)(-1+b)}{2bk}, \hskip.08in \frac{1-3b+3b^2}{6b^2k}, \hskip.08in 0, \hskip.08in \frac{1}{6b^2k}, \hskip.08in \frac{1-3b+3b^2}{6b^2k}, \dots, \frac{1-3b+3b^2}{6b^2k}, \hskip.08in \frac{-1+b}{2bk}.$$
The expression $\displaystyle{\frac{1-3b+3b^2}{6b^2k}}$ appears $k$ times, therefore, the total volume is:
$$\frac{1+bk}{2b} + \frac{(2k-1)(-1+b)}{2bk} + \frac{k(1-3b+3b^2)}{6b^2k} + 0 + \frac{1}{6b^2k} + \frac{-1+b}{2bk}=\frac{(k+1)-6kb+3k(k+3)b^2}{6kb^2}.$$ 
\end{proof}

\begin{corollary}  The formula for $\hk(\fm, \mathcal B(a, 1))$ shown in \eqref{HKeqn} agrees with that of Watanabe and Yoshida in \cite[Example 3.5]{WY1}. 
\end{corollary}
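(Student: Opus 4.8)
The plan is to verify the corollary by specializing the general formula of Theorem~\ref{HKformula} to the boundary value $b = 1$ and matching the outcome with the Watanabe--Yoshida computation. Since $\mathcal B(a,1)$ is exactly the case $b = 1$ (so that $a = k\cdot 1 = k$), I would substitute $b = 1$ into the right-hand side of \eqref{HKeqn}. Using the first expression this gives
\[
\frac{(k+1) - 6k + 3k(k+3)}{6k} \;=\; \frac{3k^2 + 4k + 1}{6k} \;=\; \frac{(3k+1)(k+1)}{6k},
\]
so that, after replacing $k$ by $a$, the predicted value is $\dfrac{(3a+1)(a+1)}{6a}$. The second, factored-in-$a$ form of \eqref{HKeqn} produces the same number at $b=1$, which serves as a built-in consistency check; moreover, at $a=1$ the value is $4/3$, agreeing with $\hk(\mathcal B(1,1))$ as computed from Corollary~\ref{a=b}(1).

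Next I would invoke Corollary~\ref{WYscroll}, which identifies $\mathcal B(a,1)$ with the rational normal scroll $S$ presented by the $2\times 2$ minors of the displayed $2\times(a+1)$ matrix. Inspecting the $T$-graded monomial description of $S$, one sees that the $y^0$-piece spans the exponents $\{x^0,x^1\}$ and the $y^1$-piece spans $\{x^1,x^0,\dots,x^{-(a-1)}\}$, so that $\mathcal B(a,1)$ is the affine cone over the surface scroll $S(1,a)$ of degree $a+1$, a three-dimensional normal domain. The task then reduces to evaluating the closed form of \cite[Example 3.5]{WY1} at the scroll type $(1,a)$ and checking that it collapses to $\dfrac{(3a+1)(a+1)}{6a}$; once both sides are placed over the common denominator $6a$, this is a routine algebraic identity.

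The main obstacle I anticipate is notational rather than mathematical: Watanabe and Yoshida index their example by the scroll type, whereas the natural parameter here is $a$, so the genuine work is to extract the correct type $(1,a)$ from the presentation of Corollary~\ref{WYscroll} --- equivalently, from the ranges of $x$-exponents in the $y^0$- and $y^1$-graded pieces of $S$ --- and to substitute it faithfully into their formula. Once this dictionary between the two parametrizations is fixed, no new computation is required beyond the substitution $b=1$ already carried out, and the two expressions must coincide; the small cases $a=1$ (the quadric cone $S(1,1)$) and $a=2$ provide an additional numerical confirmation.
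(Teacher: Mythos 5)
Your proposal is correct and follows essentially the same route as the paper: substitute $b=1$ (so $k=a$) into \eqref{HKeqn} to get $\frac{1+4a+3a^2}{6a}$, identify $\mathcal B(a,1)$ with the rational normal scroll via Corollary~\ref{WYscroll}, and match against Watanabe--Yoshida. The paper just carries out the last step explicitly --- plugging $\mathrm{e}(S_{\fm_S})=a+1$ into their formula $\frac{\mathrm{e}}{2}+\frac{\mathrm{e}}{6a}$ to recover the same expression --- where you defer it as a routine identity.
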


\begin{proof}
With $\fm_S$ the irrelevant maximal ideal of the rational normal scroll $S$ in Corollary \ref{WYscroll}, Watanabe and Yoshida \cite[Example 3.5]{WY1} show that the Hilbert-Kunz multiplicity of $S_{\fm_S}$ is given by the formula
\begin{equation} \label{scroll}
\hk(S_{\fm_S}) = \frac{\rm e(S_{\fm_S})}{2} + \frac{\rm e(S_{\fm_S})}{6a},
\end{equation}
where ${\rm{e}}(S_{\fm_S}) = a+1$.  Making these substitutions in Eq.~\eqref{scroll}, we obtain
$$\frac{1+4a+3a^2}{6a},$$
which is exactly the formula for $\hk(\fm, \mathcal B)$ in Theorem \ref{HKformula} with $b = 1$ and $a = kb$.
\end{proof}

\begin{remark}  The formula for $\hk(\fm, \mathcal B(a, a))$ shown in Corollary \ref{a=b} {\it (1)} can be obtained by taking $k=1$ in Theorem~\ref{HKformula}.
\end{remark}


\section{Appendix}


\subsection{Code for Computation}

To find the Hilbert-Kunz multiplicity using computer software for a specific example $\mathcal B(a, b)$ in the case when $n=1$, with $a \geq b$, one should use an adaptation of the code below.  

\begin{example}
Assume $a=3, b=2$. Then $\mathcal H =\{(1,0), (0,1), (2,3), (1,1), (1,2) \}$, which can be obtained with {\it Macaulay 2}, as per the code \cite[p.~631]{M}.  Next, using the {\it Mathematica} code below for computation, we get that the Hilbert-Kunz multiplicity of $\cB(3,2)$ at its homogeneous maximal ideal is $41/18$.  (Recall from Section 3 that $s(\cB(3,2)) = 11/36$.)

{\tt Integrate[Boole[((x >= 0 \&\& y >= 0 \&\& z >= 2y \&\& z >= 3x)) }

{\tt \&\& ((z < 2y + 1 ||  z < 3x + 1))  \&\& ((x < 1 || z < 2y + 3)) \&\&  ((y < 1 || z < 3x + 2))}

{\tt \&\& ((x < 2 || y < 3)) \&\& ((x < 1 || y < 1 || z < 2y + 1)) }

{\tt \&\&  ((x < 1 || y < 2 || z < 3x + 1))],  \{x, 0, 10\}, \{y, 0, 10\}, \{z, 0, 20\} ]}  

\end{example}

\subsection{Three Dimensional Regions-A Standard Example}

The following graphs show the three distinct volumes (which equal 2, 2/3, and 4/3, respectively) that are calculated in the previous three sections, for the simplest, and most familiar, example of an intersection algebra; i.e., for $I = (x)$, $J = (x)$, in $R=k[x]$.  The ring $\B_R(I,J)$ is generated over $\sfk$ by $x, xu, xv, xuv$ and $\B \cong \sfk[x_1,A,B,C]/(AB-Cx_1)$, as per Proposition \ref{q-gorenstein}.

\begin{figure}[tbh]
    \showtwo{Fig-front}{Fig-side}{0.25}
    \caption{{\bf Hilbert-Samuel Multiplicity}: view from the front and from the side after rotation around the $z$-axis.}
\end{figure}

\begin{figure}[tbh]
   \showtwo{FSigFigFront}{FSigFigSide}{0.25}
    \caption{{\bf $F$-Signature}: view from above and from the side after rotation around the $z$-axis.}
 \end{figure}

 \begin{figure}[tbh]
    \showtwo{HKFig-Front}{HKFig-Back}{0.25}
    \caption{{\bf Hilbert Kunz Multiplicity}: view from above and from the back after rotation around the $z$-axis. }
 \end{figure}




\pagebreak

\end{document}